\title{The limit set of non-orientable mapping class groups}
\author{Sayantan Khan}
\address{Department of Mathematics, University of Michigan, Ann Arbor, MI}
\email{\href{mailto:saykhan@umich.edu}{saykhan@umich.edu}}
\thanks{The author was partially supported by NSF Grant DMS 1856155.}
\urladdr{\url{https://www-personal.umich.edu/~saykhan/}}
\keywords{mapping class group, Teichm\"uller space, Thurston boundary}
\subjclass[2010]{57K20}
\date{\today}
\begin{document}

\begin{abstract}
  We provide evidence both for and against a conjectural analogy between geometrically finite infinite covolume Fuchsian groups and the mapping class group of compact non-orientable surfaces.
  In the positive direction, we show the complement of the limit set is open and dense.
  Moreover, we show that the limit set of the mapping class group contains the set of uniquely ergodic foliations and is contained in the set of all projective measured foliations not containing any one-sided leaves, establishing large parts of a conjecture of Gendulphe.
  In the negative direction, we show that a conjectured convex core is not even quasi-convex, in contrast with the geometrically finite setting.
\end{abstract}
\maketitle

% \tableofcontents

\section{Introduction}
\label{sec:introduction-v2}

The moduli space $\mg(\no_g)$ of compact \emph{non-orientable} hyperbolic surfaces of genus $g$ is conjectured to have similarities to infinite volume geometrically finite manifolds (in a manner similar to how moduli spaces of compact orientable surfaces have properties similar to finite volume hyperbolic manifolds).
The main results suggesting the analogy between moduli spaces of non-orientable surfaces and infinite volume geometrically finite manifolds are due to Norbury and Gendulphe.

\begin{itemize}
\item The $\mg(\no_g)$ has infinite Teichm\"uller volume \cite[Theorem 17.1]{gendulphe_whats_2017}.
  While the associated Teichm\"uller space does not have a Weil-Peterson volume form, it has an analogous volume form with respect to which the moduli space has infinite volume as well (see \cite{norbury2008lengths}).
\item The action of the mapping class group $\mcg(\no_g)$ on the Thurston boundary is not minimal (Proposition 8.9 in \cite{gendulphe_whats_2017}).
\item The Teichm\"uller geodesic flow is not topologically transitive, and thus not ergodic with respect to any Borel measure with full support \cite[Proposition 17.5]{gendulphe_whats_2017}.
\item There exists an $\mcg(\no_g)$-equivariant finite covolume deformation retract of $\teich(\no_g)$.
\end{itemize}

We extend this analogy further, by showing that the limit set of $\mcg(\no_g)$ is contained in the complement of a full measure dense open set.
\begingroup
\def\thetheorem{\ref{cor:geolimset}}
\begin{theorem}
  The limit set of $\mcg(\no_g)$ is contained in the complement of $\pmf^-(\no_g)$.
\end{theorem}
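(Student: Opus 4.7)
The plan is to argue by contrapositive. Suppose $[\lambda] \in \pmf^-(\no_g)$ contains a one-sided leaf $\gamma$ of positive transverse weight $w$, and suppose, toward a contradiction, that $\phi_n p_0 \to [\lambda]$ in the Thurston compactification for some $p_0 \in \teich(\no_g)$ and some sequence $\phi_n \in \mcg(\no_g)$.

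The first ingredient I would establish is a systolic input: there is a universal constant $L_g > 0$ such that every one-sided simple closed curve on every point of $\teich(\no_g)$ has hyperbolic length at least $L_g$. The cleanest proof goes through the orientation double cover $p \colon \tilde S \to \no_g$ with deck involution $\sigma$, where a one-sided curve $\gamma'$ lifts to a connected $\sigma$-invariant two-sided curve $\tilde\gamma'$ with $\ell_{\tilde x}(\tilde\gamma') = 2\ell_x(\gamma')$; the collar lemma applied to $\tilde\gamma'$, together with $\sigma$-equivariance of the embedded collar, yields the lower bound. Because $\mcg(\no_g)$ preserves the topological type of a curve, one gets $\ell_{\phi_n p_0}(\gamma) = \ell_{p_0}(\phi_n^{-1}\gamma) \geq L_g$ for every $n$.

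The natural way to convert this into a contradiction is via the $\mcg(\no_g)$-equivariant finite-covolume deformation retract of $\teich(\no_g)$ recalled in the introduction. If that retract $X$ can be identified with the $L_g$-thick-for-one-sided-curves locus $\{x \in \teich(\no_g) : \ell_x(\gamma') \geq L_g \text{ for every one-sided } \gamma'\}$, then every orbit can be $\mcg(\no_g)$-equivariantly pushed into $X$, reducing matters to: can a sequence $x_n \in X$ converge to $[\lambda] \in \pmf^-(\no_g)$? Applying the Thurston convergence criterion $\ell_{x_n}(\alpha)/M_n \to i(\lambda, \alpha)$ and using that the positive atomic weight on the one-sided leaf $\gamma$ produces a positive lower bound on $i(\lambda, \gamma)$ (a phenomenon specific to the non-orientable setting, where the usual $i(\gamma, \gamma) = 0$ reasoning for two-sided closed leaves is unavailable), one expects to force a pinching of $\gamma$ itself, or of the boundary $\delta = \partial N(\gamma)$ of its M\"obius-band neighborhood, violating the defining property of $X$.

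The hard part will be to convert the positivity of $i(\lambda, \gamma)$ into a genuine $\ell_{x_n}(\gamma) \to 0$ conclusion rather than merely an $o(M_n)$ estimate that is compatible with the systolic lower bound. The decisive distinction from the orientable setting, where two-sided leaves of a limiting foliation produce no such obstruction, is the non-trivial pairing of a one-sided leaf with itself; I expect the cleanest way to make this quantitative is on the double cover, where $\sigma$-invariance of $\tilde\gamma$ gives explicit control on $i(\tilde\lambda, \tilde\gamma)$ through the atomic contribution of the leaf, while $\sigma$-equivariance of the approximating metrics $\tilde x_n$ reduces the length estimate to a standard collar computation in the orientable theory.
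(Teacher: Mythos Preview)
Your proposal has the right overall shape but contains two errors, one repairable and one that leaves the main step unaddressed.

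First, there is no universal lower bound $L_g$ on the hyperbolic length of one-sided curves: one-sided curves can be pinched just like two-sided ones, and the collar lemma produces a wide embedded neighbourhood of a short geodesic, not a length lower bound; $\sigma$-equivariance of that collar on the double cover does nothing to obstruct pinching. This is easily fixed: the one-sided systole at the \emph{basepoint} $p_0$ is a positive number $\varepsilon(p_0)$, and since $\mcg(\no_g)$ preserves the topological type of curves, the orbit $\mcg(\no_g)\cdot p_0$ lies in $\systole(\no_g)$ for that $\varepsilon$. This is exactly the paper's set-up.

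Second, and more seriously, your proposed mechanism for the ``hard part'' does not work. The claim that $i(\lambda,\gamma)>0$ because $\gamma$ is a one-sided atom is false: $\gamma$ is a \emph{leaf} of $\lambda$, so its $\lambda$-transverse measure vanishes and $i(\lambda,\gamma)=0$; the paper uses precisely this in the proof of Proposition~\ref{prop:pinching}. Even if $i(\lambda,\gamma)$ were positive, Thurston convergence would give $\ell_{x_n}(\gamma)\sim i(\lambda,\gamma)\,M_n\to\infty$, the opposite of pinching. What is actually needed is the strictly stronger conclusion $\ell_{x_n}(\gamma)\to 0$ (not merely $o(M_n)$), and this does \emph{not} follow from convergence in the Thurston boundary alone --- in the orientable setting the analogous statement is false, since Dehn twisting about a two-sided curve $\gamma$ sends $x_0$ to $[\gamma]$ while keeping $\ell(\gamma)$ constant. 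The paper proves the needed pinching by a hyperbolic-trigonometric computation in Fenchel--Nielsen coordinates (Proposition~\ref{prop:pinching}): placing $\gamma=p_0$ as a cuff of a pair of pants and using the right-angled pentagon identity, one bounds the length of a curve $p_3$ crossing $p_0$ by $\ell(p_0)$ plus an orthogeodesic length; the absence of a twist parameter for the one-sided cuff --- this is where one-sidedness genuinely enters --- then forces $\ell(p_0)\to 0$. Your double-cover/collar heuristic does not supply this argument, and nothing in the orientable theory you invoke does either.
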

\addtocounter{theorem}{-1}
\endgroup
Here $\pmf^-(\no_g)$ is the set of all projective measured foliations that have one-sided compact leaf.
The fact that such foliations form a full measure dense open subset is classical, due to Danthony-Nogueira (see \cite{ASENS_1990_4_23_3_469_0}).
This is analogous to limit sets of infinite volume geometrically finite groups, where the complement of the limit set is a full measure open set as well.

In \cite{gendulphe_whats_2017}, Gendulphe constructed a retract of $\teich(\no_g)$ to $\systole$, the set of points in the Teichm\"uller space that have no one-sided curves shorter than $\varepsilon$, and showed that it has finite covolume.
They also asked the following question about $\systole$.
\begin{unquestion}[Question 19.1 of \cite{gendulphe_whats_2017}]
  Is $\systole$ quasi-convex with respect to the Teichmüller metric?
\end{unquestion}
We show that ${\systole}$ is not quasi-convex, answering the above question.
\begingroup
\def\thetheorem{\ref{thm:qc-fail}}
\begin{theorem}
  For all $\varepsilon > 0$, and all $D > 0$, there exists a Teichm\"uller geodesic segment whose endpoints lie in ${\systole}$ such that some point in the interior of the geodesic is more than distance $D$ from $\systole$.
\end{theorem}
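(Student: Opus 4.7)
The plan is to construct, for each $D > 0$, a Teichm\"uller geodesic $\gamma\colon[-T,T]\to\teich(\no_g)$ with both endpoints in $\systole$ but midpoint at distance $> D$ from $\systole$. The crucial ingredient is that the hyperbolic length of a short curve changes by at most a factor of $e^{|t|}$ under the Teichm\"uller flow of time $t$, so a midpoint where some one-sided curve $c$ has length $\varepsilon e^{-R}$ automatically sits at distance at least $R - O(1)$ from $\systole$. The task then reduces to realizing this deep excursion at the midpoint while keeping every one-sided curve of length at least $\varepsilon$ at both endpoints.

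Fix a one-sided simple closed curve $c$ on $\no_g$. I would look for a quadratic differential $q$ at a basepoint $X\in\teich(\no_g)$ such that the horizontal and vertical intersection numbers of $c$ with the foliations of $q$, denoted $h_q(c)$ and $v_q(c)$, are balanced and small: $h_q(c)=v_q(c)=\varepsilon e^{-R}$ for a large parameter $R$. Along the Teichm\"uller geodesic $t\mapsto q_t$ based at $X$, the flat length of $c$ is comparable to $\sqrt{e^{2t} h_q(c)^2+e^{-2t} v_q(c)^2}$, which is minimized at $t=0$ with value $\sqrt 2\,\varepsilon e^{-R}$ and equals approximately $\varepsilon$ at $t=\pm R$. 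Setting $T=R$ then places the midpoint $\gamma(0)=X$ deep inside the $\systole$-complement, while $\ell_{\gamma(\pm R)}(c) \sim \varepsilon$.

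The technical heart is to ensure that at $t=\pm R$, no one-sided curve other than $c$ drops below length $\varepsilon$. A direct calculation shows that a curve $c'$ is short at $t=R$ exactly when $h_q(c')<\varepsilon e^{-R}$ (and $v_q(c')<\varepsilon e^{R}$, which is mild), and symmetrically at $t=-R$ with the roles of $h_q$ and $v_q$ exchanged. If both foliations of $q$ are uniquely ergodic with no one-sided compact leaves, then $h_q(c')$ and $v_q(c')$ are strictly positive for every simple closed curve $c'$, and by perturbing $q$ within the codimension-two subvariety of quadratic differentials satisfying the balance $h_q(c)=v_q(c)=\varepsilon e^{-R}$, I would arrange that $c$ is the unique curve whose $h_q$- and $v_q$-values fall below the threshold $\varepsilon e^{-R}$, forcing every other one-sided curve to have endpoint length at least $\varepsilon$.

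The principal obstacle I anticipate is carrying out this last perturbation uniformly as $R \to \infty$, since both the ambient subvariety in the stratum and the finite list of competitor curves shift with $R$. A natural reduction is to pass to the orientation double cover of $\no_g$, where one-sided simple closed curves correspond to involution-invariant essential simple closed curves on an orientable surface, and to invoke standard dynamical tools on flat surfaces (quantitative unique ergodicity, Masur's compactness criterion, and genericity in strata of abelian or quadratic differentials) to produce the required $q$ and control the $h_q$- and $v_q$-values of the competing curves.
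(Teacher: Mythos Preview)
Your overall strategy matches the paper's: produce a Teichm\"uller geodesic with an interior point where some one-sided curve $c$ is extremely short, then invoke Wolpert's lemma to conclude that this interior point lies far from $\systole$. The divergence is entirely in how you propose to guarantee that the \emph{endpoints} land in $\systole$, and this is where your argument has a genuine gap that you yourself flag.

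You want to use a quadratic differential $q$ whose horizontal and vertical foliations are uniquely ergodic and lie in $\pmf^+(\no_g)$, and then perturb so that $c$ is the only one-sided curve with $h_q$- and $v_q$-values below the threshold $\varepsilon e^{-R}$. The difficulty is not merely one of uniformity in $R$: the condition ``no other one-sided curve $c'$ satisfies $i(c',\lambda_h) < \varepsilon e^{-R}$'' is an infinite family of inequalities, and generic perturbation does not obviously enforce it. Worse, one-sided simple closed curves accumulate on every point of $\pmf(\no_g)$, including uniquely ergodic foliations in $\pmf^+(\no_g)$; for instance, if $\phi\in\mcg(\no_g)$ is pseudo-Anosov with stable foliation $\lambda_h$ and $\delta$ is any one-sided curve, then $i(\phi^n\delta,\lambda_h)\to 0$ while each $\phi^n\delta$ remains one-sided. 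So for any such $\lambda_h$ there are one-sided curves with arbitrarily small horizontal intersection number, and you would have to argue that $c$ realizes the infimum --- something your perturbation scheme does not address. The tools you list (quantitative unique ergodicity, Masur's criterion, genericity in strata) control recurrence and equidistribution, not this kind of systolic minimality among one-sided curves.

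The paper sidesteps the issue by going in the opposite direction: instead of generic foliations, it uses \emph{periodic} horizontal and vertical foliations. The DQD $q$ is built explicitly (first for $g=4$ and $g=9$, then by connect-summing orientable pieces) so that (a) there is a high-modulus annulus whose core is the square of a one-sided curve, forcing that curve to be hyperbolically short at the midpoint, and (b) the horizontal and vertical directions each decompose into cylinders whose core curves are two-sided and whose complementary subsurfaces are \emph{orientable}. Condition~(b) is the substitute for your perturbation step: once $t$ is large enough that every vertical cylinder in $g_t q$ has width exceeding $2k\sqrt{\varepsilon}$, any curve of flat length below $k\sqrt{\varepsilon}$ is trapped in a single orientable piece or is a core curve, hence is two-sided. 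This is a purely topological obstruction, requiring no control over infinitely many intersection numbers, and it is the idea your proposal is missing.
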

\addtocounter{theorem}{-1}
\endgroup

Since $\systole$ is an $\mcg(\no_g)$-invariant subset of $\teich(\no_g)$, the intersection of its closure with the boundary must also be $\mcg(\no_g)$-invariant, and therefore contain the limit set of $\mcg(\no_g)$.
This suggests that if we want long geodesic segments that start and end in $\systole$, we must look for Teichmüller geodesics that have their expanding and contracting foliations in the limit set.
Conjecture 9.1 of \cite{gendulphe_whats_2017} states that the limit set should exactly be the complement of $\pmf^-(\no_g)$, the set of projective measured foliations that do not contain any one-sided leaves (denoted $\pmf^+(\no_g)$).
We prove a result that is slightly weaker than the conjecture.
\begingroup
\def\thetheorem{\ref{thm:rational-approximation}}
\begin{theorem}
  A foliation $\lambda \in \pmf^+(\no_g)$ is in the limit set of $\mcg(\no_g)$ if all the minimal components $\lambda_j$ of $\lambda$ satisfy one of the following criteria.
  \begin{enumerate}[(i)]
  \item $\lambda_j$ is periodic.
  \item $\lambda_j$ is ergodic and orientable, i.e. all leaves exiting one side of a transverse arc always come back from the other side.
  \item $\lambda_j$ is uniquely ergodic.
  \end{enumerate}
  Furthermore, if $\lambda_j$ is minimal, but not uniquely ergodic, there exists some other foliation $\lambda_j^{\prime}$ supported on the same topological foliation as $\lambda_j$ which is in the limit set.
\end{theorem}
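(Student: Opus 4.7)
My plan is to treat each minimal component of $\lambda$ separately, producing for each one a sequence of mapping classes whose orbits converge in $\pmf(\no_g)$ to that component, and then combining via closedness of the limit set together with the commutativity of mapping classes supported on disjoint subsurfaces.

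The periodic case (i) should be immediate: the weighted multicurve $\sum c_i \gamma_i$ consists entirely of two-sided simple closed curves because $\lambda \in \pmf^+(\no_g)$, and products of powers of Dehn twists $\prod \mathrm{Tw}_{\gamma_i}^{n_i(k)}$ with $n_i(k)/n_j(k) \to c_i/c_j$ send any basepoint to an orbit converging to $\sum c_i \gamma_i$ in Thurston's compactification by a standard estimate, with each individual twist lying in $\mcg(\no_g)$.

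For the uniquely ergodic case (iii), I would run a Masur-style recurrence argument inside Gendulphe's retract $\systole$. Because $\lambda$ has no one-sided leaves and each uniquely ergodic component fills its support subsurface, one verifies $i(\lambda,\gamma) > 0$ for every one-sided simple closed curve $\gamma$. The standard extremal-length lower bound along the Teichm\"uller ray $X_t$ with vertical foliation $\lambda$ then keeps every $\ell_{X_t}(\gamma)$ uniformly bounded below, so the projection of the ray to $\mg(\no_g)$ lies in a compact subregion of Gendulphe's finite-covolume retract. Recurrence produces times $t_k \to \infty$ and mapping classes $\phi_k$ with $\phi_k(X_{t_k})$ bounded in $\teich(\no_g)$; combined with Masur's theorem that $X_{t_k} \to \lambda$ in Thurston's compactification, a change-of-basepoint argument yields $\phi_k^{-1}(X_0) \to \lambda$ in $\pmf$.

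For the orientable ergodic case (ii), an orientable minimal foliation cannot contain one-sided leaves, so the same recurrence argument applies once one verifies $i(\lambda_j,\gamma) > 0$ for one-sided $\gamma$ meeting its support; alternatively one can lift $\lambda_j$ to the orientation double cover $\tilde{\no}_g \to \no_g$, approximate it by stable foliations of pseudo-Anosovs commuting with the covering involution, and descend these to $\mcg(\no_g)$. For the \emph{furthermore} claim, when $\lambda_j$ is minimal but not uniquely ergodic the direction-$\lambda_j$ ray need not be recurrent, but Masur's analysis of non-unique ergodicity shows its Thurston limit $\lambda_j^{\prime}$ is a distinct transverse measure on the same topological foliation, still free of one-sided leaves, to which one of the previous arguments applies. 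To assemble the full $\lambda$ from its components I would use a diagonal argument: approximating mapping classes for distinct components can be chosen supported on disjoint subsurfaces, hence commute, and their weighted products can be tuned to produce the prescribed weighted combination. The main obstacle will be case (iii): converting the topological hypothesis $\lambda \in \pmf^+(\no_g)$ into a simultaneous uniform lower bound $i(\lambda,\gamma) \geq \delta$ over \emph{all} one-sided $\gamma$ when no single component individually fills $\no_g$ requires care, as does upgrading Poincar\'e-type recurrence in the finite-covolume quotient of $\systole$ to a sequence of mapping classes whose orbits converge to $\lambda$ on the Thurston boundary.
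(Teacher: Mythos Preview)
Your overall strategy---Teichm\"uller-ray recurrence in place of a direct curve-approximation argument---is genuinely different from the paper's, but case (iii) has a real gap that I do not see how to close.

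You assert that when $\lambda$ is uniquely ergodic the ray $X_t$ with vertical foliation $\lambda$ projects into a \emph{compact} subregion of $\systole/\mcg(\no_g)$, and then invoke recurrence to extract mapping classes $\phi_k$. Two things go wrong here. First, $\systole/\mcg(\no_g)$ is finite volume but not compact: two-sided curves can still get arbitrarily short there, so staying in $\systole$ does not by itself yield recurrence to a compact set. Second, and more seriously, unique ergodicity of the vertical foliation does \emph{not} imply recurrence of the Teichm\"uller ray, already in the orientable setting; there are uniquely ergodic directions whose rays diverge in moduli space. Masur's criterion runs only one way. So the step ``recurrence produces times $t_k$ and mapping classes $\phi_k$'' is unsupported. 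You flag this as an obstacle at the end, but it is not a technicality to be filled in---it is the heart of the matter, and the argument as written does not go through. A related issue: the $\phi_k$ you would extract from recurrence have no reason to be supported on the subsurface carrying $\lambda_j$, so your assembly step (commuting mapping classes on disjoint subsurfaces) also breaks down. Finally, in the ``furthermore'' clause you say one of the previous arguments applies to $\lambda_j'$; but $\lambda_j'$ need not be periodic, orientable, or uniquely ergodic, so none of (i)--(iii) covers it.

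For comparison, the paper avoids Teichm\"uller dynamics entirely. It reduces the problem to showing $\lambda$ lies in $\overline{\pmf^+(\no_g,\mathbb{Q})}$, then works directly on a transverse arc: it follows a generic leaf, records how first returns come back (from which side, with or without the local orientation flipped), and in each case surgers the leaf segment into a \emph{two-sided} simple closed curve. The Anosov closing lemma controls the geodesic tightening, and a convexity argument on the finite-dimensional simplex of invariant measures (their Lemma on orbit measures) forces a subsequence of these curves to converge to the desired ergodic measure. This is more hands-on but sidesteps the recurrence issue completely, and it also gives the ``furthermore'' statement for free: when the closing produces a curve whose tightening drifts away, compactness of the simplex of transverse measures still forces convergence to \emph{some} measure on the same topological foliation.
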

\addtocounter{theorem}{-1}
\endgroup

With this description of the limit set, we prove \autoref{thm:qc-fail} by constructing a family of Teichmüller geodesics whose expanding and contracting foliations are of the kind described by \autoref{thm:rational-approximation}, and showing that some point in the interior of the geodesic segment is arbitrarily far from $\systole$.

We now exhibit two additional contexts in which one is naturally led to consider the limit set of $\mcg(\no_g)$.

\subsection*{Counting simple closed curves}

In the orientable setting, the number of simple closed geodesics of length less than $L$ grows like a polynomial of degree $6g-6$, which is precisely the dimension of the limit set of the mapping class group: in this case, that happens to be all of $\pmf$.
In the non-orientable setting, Gendulphe showed that the growth rate of the simple closed geodesics of length less than $L$ is smaller than $L^{\dim(\pmf(\no_g))}$, and one might conjecture that the growth rate is $L^h$, where $h$ is the Hausdorff dimension of the limit set.
Mirzakhani, in \autocite{mirzakhani2008growth}, obtained precise asymptotics for the counting function in the orientable case by essentially proving equidistribution (with respect to Thurston measure) of $\mcg(\os_{g})$-orbits in $\pmf(\os_g)$.
The same problem for non-orientable surfaces is posed in \cite[Problem 9.2]{wright2020tour}: to make the above techniques work in this setting, we need an ergodic measure supported on sets
minimal with respect to the $\mcg$ action, e.g. $\overline{\pmf^+(\no_d)}$.
One way to construct such a measure would be to replicate the construction of Patterson-Sullivan measures for geometrically finite manifolds, which brings us back
to the analogy between $\mg(\no_d)$ and infinite volume geometrically finite manifolds.
In the case of $\no_{1,3}$ (i.e. the real projective plane with $3$ punctures), Gamburd, Magee, and Ronan have proved a counting result for simple closed curves by constructing a conformal measure of non-integer Hausdorff dimension on the limit set (\cite[Theorem 10]{10.4007/annals.2019.190.3.2}), and then using that conformal measure to count simple closed curves (\cite[Theorem 2]{10.1093/imrn/rny112}).

\subsection*{Interval exchange transformations with flips}

Teichm\"uller spaces of non-orientable surfaces also show up in the context of \emph{interval exchange transformations with flips}.
The dynamics of interval exchange transformations are closely related to the dynamics of horizontal/vertical flow on an associated quadratic differential, which is related to the geodesic flow on the Teichm\"uller surface via Masur's criterion (a version of which holds in the non-orientable setting as well).
IETs with flips do not have very good recurrence properties: in fact, almost all of them (with respect to the Lebesgue measure) have a periodic point (see \cite{nogueira_1989}) and the set of minimal IETs with flips have a lower Hausdorff dimension (see \cite{skripchenko2018hausdorff}).
To understand the IETs which are uniquely ergodic, one is naturally led to determine which ``quadratic differentials'' on non-orientable surfaces are recurrent.
A necessary but not sufficient condition for recurrence of a Teichm\"uller geodesic is that its forward and backward limit points lie in the limit set.
From this perspective, Theorems \ref{thm:rational-approximation} and \ref{cor:geolimset} can
be seen as a statement about the closure of the recurrent set.
Constructing a measure supported on the closure of the recurrent set can be then used to answer questions about uniquely ergodic IETs with flips.

\subsection*{Another paper on mapping class group orbit closures}
A few days after the first version of this paper was posted on arXiv, the author was notified of another recent paper by Erlandsson, Gendulphe, Pasquinelli, and Souto \cite{erlandsson2021mapping} which proves the Conjecture 9.1 of \cite{gendulphe_whats_2017}, i.e. a stronger version of \autoref{thm:rational-approximation}.
The techniques they use are significantly different, relying on careful analysis of train track charts carrying various measured laminations.
Using this result, they show minimal invariant subset of $\pmf(\no_g)$, and the limit set of $\mcg(\no_g)$ are both equal to $\pmf^+(\no_g)$.

Their methods are stronger than the ones in this paper, because while they analyze the train track charts carrying the measured foliations, we study the dynamics of foliations by studying the dynamics of the first return map on a transverse interval.
While this reduction makes the analysis significantly simpler, and works for most foliations (like uniquely ergodic foliations), it does not work for all foliations.
In particular, for certain foliations, the dynamics of the first return map do not fully capture the dynamics of the foliation.
% The authors of the other paper get around this limitation by directly analyzing the train tracks associated to the foliation, which fully captures their dynamics.

While this paper was being written, neither the author, nor the authors of \cite{erlandsson2021mapping} were aware of each others' work.

\subsection*{Organization of the paper}
Section \ref{sec:backgr-meas-foli} contains the background on non-orientable surfaces and measured foliations, and section \ref{sec:backgr-limit-sets} contains the background on limit sets of mapping class subgroups.
These sections can be skipped and later referred to if some notation or definition is unclear.
Section \ref{sec:lower-bound-limit-set} contains the proof of \autoref{thm:rational-approximation}, section \ref{sec:upper-bound-limit-set} contains the proof of Theorem \ref{cor:geolimset}, and section \ref{sec:fail-quasi-conv} contains the proof of \autoref{thm:qc-fail}.
Sections \ref{sec:lower-bound-limit-set}, \ref{sec:upper-bound-limit-set}, and \ref{sec:fail-quasi-conv} are independent of each other, and can be read in any order.

\subsection*{Acknowledgments}
The author would like to thank Alex Wright for introducing him to the problem, and also Jon Chaika, Christopher Zhang, and Bradley Zykoski, for several helpful conversations throughout the course of the project.
The author also would like to thank the creators of \texttt{surface-dynamics} \cite{vincent_delecroix_2021_5057590}, which helped with many of the experiments that guided the results in this paper.

\section{Background}
\label{sec:background}

\subsection{Non-orientable surfaces and measured foliations}
\label{sec:backgr-meas-foli}

For the purposes of this paper, the most convenient way to think about non-orientable surfaces will be to attach \emph{crosscaps} to orientable surfaces.
Given a surface $S$, attaching a crosscap is the operation of deleting the interior of a small embedded disc, and gluing the boundary $S^1$ via the antipodal map.
Attaching $k$ crosscaps to a genus $g$ surface results in a genus $2g+k$ non-orientable surface $\no_{2g+k}$ (i.e. the non-orientable surface obtained by taking the connect sum of $2g+k$ copies of $\mathbb{RP}^2$).
Associated to each cross cap is a one-sided curve, which is the image of the boundary under the quotient map.
We say that a curve intersects the crosscap if it intersects the associated one-sided curve.
% In particular, any non-orientable surface can be obtained by just attaching $1$ or $2$ crosscaps to an orientable surface.

Consider the set $\scc$ of simple closed curves on a non-orientable surface $\no$.
The elements of $\scc$ can be classified into two types.
\begin{description}
\item[Two sided curves] Tubular neighbourhoods are cylinders.
\item[One sided curves] Tubular neighbourhoods are Möbius bands.
\end{description}
The subset of two sided curves in denoted by $\scc^+$ and one sided curves by $\scc^-$.
Since these two types are topologically distinct, they form invariant subspaces with respect to the mapping class group action.
If we think of our non-orientable surface as an orientable subsurface with crosscaps attached, a two-sided curve is one that intersects an even number of crosscaps, and a one-sided curve is one that intersects an odd number of crosscaps.

The orientable double cover of $\no_g$ is the orientable surface $\os_{g-1}$, and comes with an orientation reversing involution $\iota$.
Since this is an orientation double cover, the subgroup of $\pi_1(\no_g)$ corresponding to this cover is characteristic, i.e. left invariant by every homeomorphism induced automorphism of the fundamental group.
A useful consequence of this fact is that one can lift mapping classes uniquely.
\begin{fact}
  Any self homeomorphism of $\no_g$ lifts to a unique orientation preserving self homeomorphism of $\os_{g-1}$, and as a consequence, one has the injective homomorphism induced by the covering map $p$.
  \begin{align*}
    p^{\ast}: \mcg(\no_d) \hookrightarrow \mcg^+(\os_{d-1})
  \end{align*}
  Furthermore, this inclusion preserves the mapping class type, i.e. finite order, reducible and pseudo-Anosov maps in $\mcg(\no_g)$ stay finite order, reducible, and pseudo-Anosov in $\mcg(\os_{g-1})$.
\end{fact}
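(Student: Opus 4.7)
The plan is in three stages: construct the orientation-preserving lift, descend it to a homomorphism on mapping class groups, and verify preservation of the Nielsen--Thurston type. The covering $p: \os_{g-1} \to \no_g$ corresponds to the kernel of the orientation character $w: \pi_1(\no_g) \to \mathbb{Z}/2$. Since orientability is a topological invariant of covers, $\ker(w)$ is characteristic, preserved by every automorphism of $\pi_1(\no_g)$, and in particular by $f_\ast$ for any self-homeomorphism $f$. The standard lifting criterion from covering space theory then produces a lift $\tilde{f}: \os_{g-1} \to \os_{g-1}$. The two possible lifts differ by the deck involution $\iota$, which is orientation-reversing, so exactly one is orientation-preserving, giving both existence and uniqueness.

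To descend to mapping class groups, I would observe that an isotopy $F_t$ between self-homeomorphisms of $\no_g$ lifts continuously via the homotopy lifting property to a path $\tilde{F}_t$ of lifts. The orientation-preserving lifts form a clopen subset of all lifts of a given map, so if $\tilde{F}_0$ is orientation-preserving, then so is $\tilde{F}_t$ for all $t$. This shows $p^\ast$ is well-defined on mapping classes, and since composition of orientation-preserving lifts covers composition of the maps, it is a homomorphism. For injectivity, assume $\tilde{f} \simeq \mathrm{id}$. Then $\tilde{f}_\ast$ is trivial on $\pi_1(\os_{g-1})$, and uniqueness of the lift applied to $f \circ \mathrm{id}$ forces $\iota \tilde{f} = \tilde{f} \iota$, so $f_\ast$ is trivial on $p_\ast \pi_1(\os_{g-1})$, an index-two subgroup of $\pi_1(\no_g)$. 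A short argument using the outer action on the quotient $\pi_1(\no_g) / p_\ast \pi_1(\os_{g-1})$ then forces $f_\ast = \mathrm{id}$ on all of $\pi_1(\no_g)$, and the Dehn--Nielsen--Baer theorem for non-orientable surfaces concludes $f \simeq \mathrm{id}$.

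For preservation of type, the finite-order case is immediate since $\tilde{f}^n = \widetilde{f^n} \simeq \mathrm{id}$ whenever $f^n \simeq \mathrm{id}$. For reducibility, an $f$-invariant multicurve $C$ pulls back to an $\tilde{f}$-invariant essential multicurve $p^{-1}(C)$ on $\os_{g-1}$. The pseudo-Anosov case is the most substantive: invariant transverse measured foliations $\mathcal{F}^{u,s}$ on $\no_g$ with stretch factors $\lambda^{\pm 1}$ pull back to invariant transverse measured foliations on $\os_{g-1}$ with the same stretch factors under $\tilde{f}$, and the defining pseudo-Anosov conditions (filling, no closed leaves, finitely many prong singularities) are preserved, because any such degeneration upstairs would project under the finite-to-one map $p$ to a degeneration downstairs.

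The main obstacle I anticipate is the injectivity step: reducing ``$\tilde{f}$ is isotopic to the identity'' on $\os_{g-1}$ to the analogous statement for $f$ on $\no_g$ requires care, because isotopies do not automatically descend equivariantly along $p$. The cleanest route runs through fundamental group actions together with the non-orientable Dehn--Nielsen--Baer theorem, as above; a more geometric route of averaging an isotopy of $\tilde{f}$ against $\iota$ to enforce $\iota$-equivariance is intuitive but substantially more technical to make rigorous as a homotopy through homeomorphisms.
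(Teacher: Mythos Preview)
The paper does not prove this statement: it is presented as a Fact with no accompanying argument, preceded only by the single sentence noting that the subgroup of $\pi_1(\no_g)$ corresponding to the orientation double cover is characteristic. Your proposal therefore supplies considerably more than the paper does, and your overall strategy---characteristic subgroup for existence of the lift, homotopy lifting for well-definedness on mapping classes, and pullback of multicurves and measured foliations for preservation of Nielsen--Thurston type---is the standard one and is correct.

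Two small remarks on your injectivity sketch. First, the commutation $\iota\tilde f=\tilde f\iota$ holds automatically for \emph{every} lift $\tilde f$ of every self-homeomorphism $f$: both $\iota\tilde f$ and $\tilde f\iota$ cover $f$, and since $\tilde f\iota\neq\tilde f$ (as $\iota\neq\mathrm{id}$) it must be the other lift $\iota\tilde f$. You do not need $\tilde f\simeq\mathrm{id}$ for this. Second, the passage from ``$f_\ast$ is inner on the index-two subgroup $p_\ast\pi_1(\os_{g-1})$'' to ``$f_\ast$ is inner on all of $\pi_1(\no_g)$'' is correct but requires a short centralizer argument (using that $\pi_1(\os_{g-1})$ is centerless for $g\geq 3$) rather than merely the outer action on the quotient $\mathbb{Z}/2$, which by itself carries no information. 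A cleaner alternative, in the spirit of the paper's subsequent discussion, is to use the embedding of Teichm\"uller spaces: the map $p^\ast$ intertwines the action of $\mcg(\no_g)$ on $\teich(\no_g)$ with the action of its image on the embedded copy inside $\teich(\os_{g-1})$, and since $\mcg(\no_g)$ acts faithfully on $\teich(\no_g)$, the kernel of $p^\ast$ is trivial.
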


One also obtains a map from $\teich(\no_g)$ to $\teich(\os_{g-1})$ using the fact that mapping classes can be lifted canonically.
Given a point $(p, \varphi)$ in $\teich(\no_g)$, where $p$ is a hyperbolic surface homeomorphic to $\no_g$, and $\varphi$ is an isotopy class of homeomorphism from $\no_g$ to $p$, we define the image of $(p, \varphi)$ in $\teich(\os_{g-1})$ to be $(\widetilde{p}, \widetilde{\varphi})$, where $\widetilde{p}$ is the orientation double cover of $p$, and $\widetilde{\varphi}$ is the orientation preserving lift of the homeomorphism $\varphi$.
One can also explicitly describe the image of this map.
To do so, we consider the extended Teichmüller space of $\os_{g-1}$, i.e. also allowing orientation reversing markings.
This space has two connected components, one for each orientation, and there is a canonical involution, given by reversing the orientation, that exchanges the two connected components.
We denote this conjugation map by $\overline{\cdot}$.
There is another involution, induced by the orientation reversing deck transformation of $\os_{g-1}$, which we denote by $\iota^{\ast}$.
This map also exchanges the two components of the extended Teichmüller space.
The image of $\teich(\no_g)$ is precisely the set of points fixed by the composition of these two maps, i.e. $\overline{\iota^{\ast}}$.
We skip the proof of these two facts, since they follow by relatively elementary covering space arguments, and summarize the result in the following theorem.

\begin{theorem}[Embedding Teichm\"uller spaces]
  \label{thm:embedding-teich}
  Given a point $(p, \varphi)$ in $\teich(\no_g)$, there is a unique point $(\widetilde{p}, \widetilde{\varphi})$ in $\teich(\os_{g-1})$, where $\widetilde{p}$ is the pullback of the metric, and $\widetilde{\varphi}$ is the unique orientation preserving lift of the marking. The image of the inclusion map is the intersection of the invariant set of $\overline{\iota^\ast}$ with the connected component of the extended Teichmüller space corresponding to orientation preserving maps.
\end{theorem}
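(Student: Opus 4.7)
The plan is to construct the embedding explicitly, check well-definedness on equivalence classes, and then characterize the image as the fixed locus of $\overline{\iota^{\ast}}$ by working in both directions. All the ingredients are already packaged in the preceding Fact, so the argument is a matter of carefully tracking markings under lifts.

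First I would build the map. Fix once and for all an orientation double cover $\pi \colon \os_{g-1} \to \no_g$ with its deck involution $\iota$, together with an orientation on the domain. Given $(p, \varphi) \in \teich(\no_g)$, pull the hyperbolic metric back by the double cover $q \colon \widetilde{p} \to p$; this produces a hyperbolic structure on a surface homeomorphic to $\os_{g-1}$ on which $\iota$ acts as an orientation-reversing isometry. Apply the preceding Fact to the marking homeomorphism $\varphi \colon \no_g \to p$: there is a unique orientation-preserving lift $\widetilde{\varphi} \colon \os_{g-1} \to \widetilde{p}$ with $q \circ \widetilde{\varphi} = \varphi \circ \pi$. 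Well-definedness on Teichm\"uller equivalence classes is the same uniqueness statement one isotopy down: an isometry $p \to p'$ isotopic to $\varphi' \circ \varphi^{-1}$ has a unique orientation-preserving lift, and an isotopy between markings on $\no_g$ lifts uniquely to an isotopy between the corresponding orientation-preserving markings on $\os_{g-1}$.

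Next I would show the image is contained in the fixed locus of $\overline{\iota^{\ast}}$ in the orientation-preserving component. By construction $\iota$ is an isometry of $\widetilde{p}$, so $\iota^{\ast}(\widetilde{p}, \widetilde{\varphi}) = (\widetilde{p}, \widetilde{\varphi} \circ \iota)$; since $\iota$ reverses orientation, this representative sits in the opposite component of the extended Teichm\"uller space. Applying the conjugation $\overline{\cdot}$ swaps components without altering the underlying marked hyperbolic structure, so $\overline{\iota^{\ast}}(\widetilde{p}, \widetilde{\varphi})$ returns to the orientation-preserving component and equals $(\widetilde{p}, \widetilde{\varphi})$ because $\widetilde{\varphi}$ and $\widetilde{\varphi} \circ \iota$ both cover $\varphi \circ \pi$ and are each orientation-preserving with respect to the appropriate orientation on the source.

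Finally I would show surjectivity onto this fixed locus. If $(X, \psi)$ in the orientation-preserving component is fixed by $\overline{\iota^{\ast}}$, there is an orientation-reversing isometric involution $\sigma$ of $X$ that is isotopic to $\psi \circ \iota \circ \psi^{-1}$ through orientation-reversing maps; $\sigma$ is free because $\iota$ is, and its quotient $p := X/\sigma$ is a hyperbolic surface homeomorphic to $\no_g$. The marking $\psi$ descends through $\sigma$ to a marking $\varphi \colon \no_g \to p$, and by construction the embedding sends $(p, \varphi)$ to $(X, \psi)$. The main obstacle I anticipate is this last bookkeeping step: one must verify that the isotopy witnessing $\sigma \simeq \psi \circ \iota \circ \psi^{-1}$ really can be chosen through orientation-reversing homeomorphisms so that it descends, and that different choices of this isotopy produce equivalent points in $\teich(\no_g)$. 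Both points reduce to the uniqueness clause of the lifting Fact, which is why I would invoke that Fact as the single nontrivial input of the proof.
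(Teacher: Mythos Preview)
The paper does not actually prove this theorem: immediately before the statement it says ``We skip the proof of these two facts, since they follow by relatively elementary covering space arguments,'' and the surrounding paragraphs only outline the construction of the map and the description of the image. Your proposal is precisely a fleshing-out of that outline---construct the lift via the preceding Fact, check well-definedness, verify the image lands in the fixed set of $\overline{\iota^{\ast}}$, and then show every fixed point descends---so there is no divergence in approach to discuss.

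One small correction to your bookkeeping in the surjectivity step: the fact that the isometry $\sigma$ is a \emph{free involution} does not reduce purely to the uniqueness-of-lifts Fact. That $\sigma^2 = \mathrm{id}$ follows because $\sigma^2$ is an isometry of the hyperbolic surface $X$ isotopic to the identity, and on a closed hyperbolic surface the only such isometry is the identity. Freeness follows because an orientation-reversing isometric involution with fixed points has a $1$-dimensional fixed locus, and such a map is not isotopic to a fixed-point-free involution (compare the quotients, or the action on $H_1$). These are standard facts about hyperbolic surfaces rather than consequences of the lifting Fact, so you should cite them as such. With that adjustment your argument is complete and matches what the paper intends.
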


It turns out that the image of $\teich(\no_g)$ in $\teich(\os_{g-1})$ is an isometrically embedded submanifold, and the geodesic flow can be represented by the action of the diagonal subgroup of $SL(2, \mathbb{R})$.

To understand the Teichm\"uller geodesic flow on $\teich(\no_g)$, we need to determine what the cotangent vectors look like: let $X$ be a point in $\teich(\no_{g})$ and let $\wt{X}$ be the corresponding point in $\teich(\os_{g-1})$.
Then the map on the extended Teichmüller space induced by the orientation reversing deck transformation maps $\wt{X}$ to $\overline{\wt{X}}$, i.e. the conjugate Riemann surface. Following that with the canonical conjugation map brings us back to $\wt{X}$.
Let $q$ be a cotangent vector at $\overline{\wt{X}}$, i.e. an anti-holomorphic quadratic differential on the Riemann surface $\overline{\wt{X}}$.
Pulling back $q$ along the canonical conjugation map gives a holomorphic quadratic differential on $X$.
In local coordinate chart on $\wt{X}$, this looks like $q(z) dz^2$ if on the corresponding chart on $\overline{\wt{X}}$ it looked like $q(\overline{z}) dz^2$.
We want this to equal $\iota^{\ast}q$, which will also be a holomorphic quadratic differential on $\wt{X}$.
If that happens, then $\iota^{\ast} q$ is a cotangent vector to the point $X$ in $\teich(\no_g)$.

\begin{example}[A cotangent vector to a point in $\teich(\no_3)$]
  Consider the quadratic differential $q$ on a genus two Riemann surface pictured in \autoref{fig:dqd-example}.
  \begin{figure}[h]
    \centering
    \def\svgscale{0.45}
    \import{./images/}{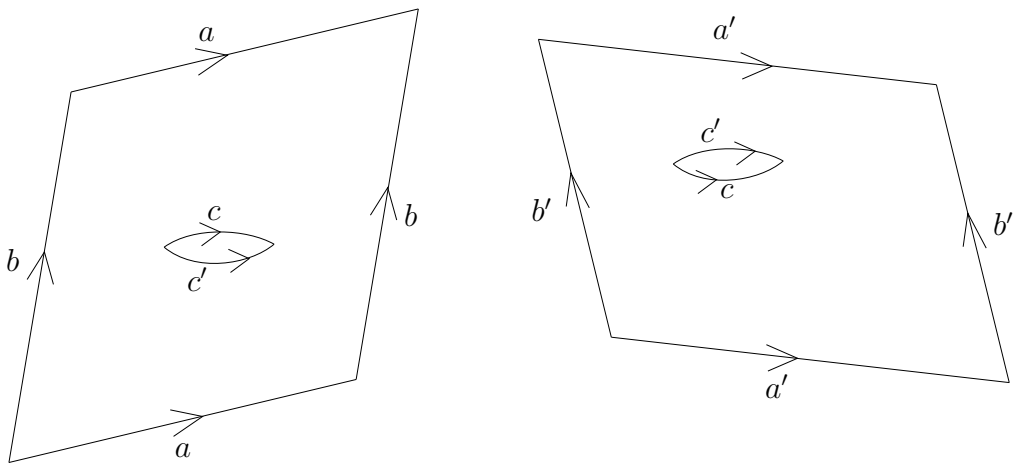}

    \caption{A quadratic differential $q$ on $\os_2$ given by the slit torus construction.}
    \label{fig:dqd-example}
  \end{figure}

  Observe that this particular quadratic differential is the global square of an abelian differential, so it makes sense to talk about the pairing between $\sqrt{q}$ and the homology classes $\{a, a^{\prime}, b, b^{\prime}, c, c^{\prime}\}$.
  Recall that the action of a mapping class like $\iota$ is merely relabelling homology classes: in this case $\iota$ swaps $a$ with $-a^{\prime}$, $b$ with $b^{\prime}$, and $c$ with $-c^{\prime}$.
  That gives us the following expressions involving $\sqrt{q}$.
  \begin{align}
    \label{eq:back-12}
    \langle \iota^{\ast}\sqrt{q}, a \rangle &= \langle \sqrt{q}, -a^{\prime} \rangle \\
    \langle \iota^{\ast}\sqrt{q}, b \rangle &= \langle \sqrt{q}, b^{\prime} \rangle \\
    \langle \iota^{\ast}\sqrt{q}, c \rangle &= \langle \sqrt{q}, -c^{\prime} \rangle
  \end{align}
  On the other hand, the conjugation action conjugates the complex value of each pairing.
  \begin{align}
    \label{eq:back-13}
    \langle \overline{\sqrt{q}}, a \rangle &= \overline{  \langle \sqrt{q}, a \rangle } \\
    \langle \overline{\sqrt{q}}, b \rangle &= \overline{  \langle \sqrt{q}, b \rangle } \\
    \langle \overline{\sqrt{q}}, c \rangle &= \overline{  \langle \sqrt{q}, c \rangle }
  \end{align}
  For $q$ to be invariant under $\overline{\iota}$, both of the above set of equations must be satisfied, which imposes certain conditions on $q$.
  For instance, if the complex lengths of $a$ and $a^{\prime}$ to be conjugates of each other, the complex lengths of $b$ and $b^{\prime}$ to be negative conjugates of each other, and forces the complex length of $c$ and $c'$ to be real.
  Only the quadratic differentials satisfying these constraints will be the cotangent vectors to points in the image of $\teich(\no_3)$.

  To realize the quadratic differential directly as an object on $\no_3$, we can quotient out the flat surface given by $q$ by the orientation reversing deck transformation.
  Doing that for our example gives the non-orientable flat surface gives the picture seen in \autoref{fig:dqd-no3}.
  \begin{figure}[h]
    \centering
    \def\svgscale{0.45}
    %% Creator: Inkscape 1.0.1 (3bc2e813f5, 2020-09-07), www.inkscape.org
%% PDF/EPS/PS + LaTeX output extension by Johan Engelen, 2010
%% Accompanies image file 'dqd-no3.pdf' (pdf, eps, ps)
%%
%% To include the image in your LaTeX document, write
%%   \input{<filename>.pdf_tex}
%%  instead of
%%   \includegraphics{<filename>.pdf}
%% To scale the image, write
%%   \def\svgwidth{<desired width>}
%%   \input{<filename>.pdf_tex}
%%  instead of
%%   \includegraphics[width=<desired width>]{<filename>.pdf}
%%
%% Images with a different path to the parent latex file can
%% be accessed with the `import' package (which may need to be
%% installed) using
%%   \usepackage{import}
%% in the preamble, and then including the image with
%%   \import{<path to file>}{<filename>.pdf_tex}
%% Alternatively, one can specify
%%   \graphicspath{{<path to file>/}}
%% 
%% For more information, please see info/svg-inkscape on CTAN:
%%   http://tug.ctan.org/tex-archive/info/svg-inkscape
%%
\begingroup%
  \makeatletter%
  \providecommand\color[2][]{%
    \errmessage{(Inkscape) Color is used for the text in Inkscape, but the package 'color.sty' is not loaded}%
    \renewcommand\color[2][]{}%
  }%
  \providecommand\transparent[1]{%
    \errmessage{(Inkscape) Transparency is used (non-zero) for the text in Inkscape, but the package 'transparent.sty' is not loaded}%
    \renewcommand\transparent[1]{}%
  }%
  \providecommand\rotatebox[2]{#2}%
  \newcommand*\fsize{\dimexpr\f@size pt\relax}%
  \newcommand*\lineheight[1]{\fontsize{\fsize}{#1\fsize}\selectfont}%
  \ifx\svgwidth\undefined%
    \setlength{\unitlength}{373.35392509bp}%
    \ifx\svgscale\undefined%
      \relax%
    \else%
      \setlength{\unitlength}{\unitlength * \real{\svgscale}}%
    \fi%
  \else%
    \setlength{\unitlength}{\svgwidth}%
  \fi%
  \global\let\svgwidth\undefined%
  \global\let\svgscale\undefined%
  \makeatother%
  \begin{picture}(1,1.02005813)%
    \lineheight{1}%
    \setlength\tabcolsep{0pt}%
    \put(0,0){\includegraphics[width=\unitlength,page=1]{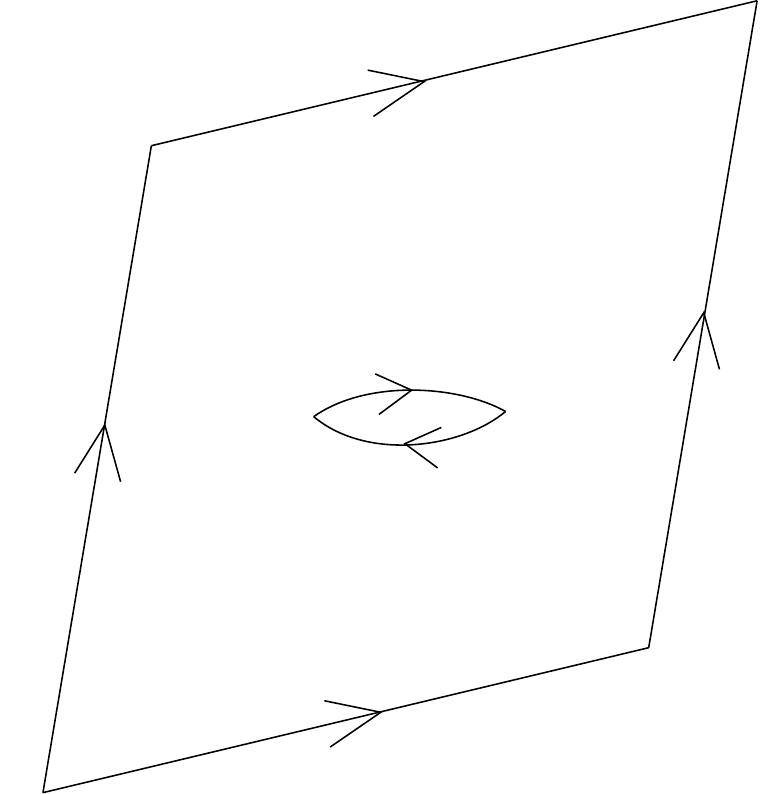}}%
    \put(0.4805943,0.94681568){\color[rgb]{0,0,0}\makebox(0,0)[lt]{\lineheight{1.25}\smash{\begin{tabular}[t]{l}$a$\end{tabular}}}}%
    \put(0.42701759,0.01259884){\color[rgb]{0,0,0}\makebox(0,0)[lt]{\lineheight{1.25}\smash{\begin{tabular}[t]{l}$a$\end{tabular}}}}%
    \put(0.03542927,0.44806521){\color[rgb]{0,0,0}\makebox(0,0)[lt]{\lineheight{1.25}\smash{\begin{tabular}[t]{l}$b$\end{tabular}}}}%
    \put(0.94136765,0.53083511){\color[rgb]{0,0,0}\makebox(0,0)[lt]{\lineheight{1.25}\smash{\begin{tabular}[t]{l}$b$\end{tabular}}}}%
    \put(0.49974254,0.54477871){\color[rgb]{0,0,0}\makebox(0,0)[lt]{\lineheight{1.25}\smash{\begin{tabular}[t]{l}$c$\end{tabular}}}}%
    \put(0.46496328,0.38824589){\color[rgb]{0,0,0}\makebox(0,0)[lt]{\lineheight{1.25}\smash{\begin{tabular}[t]{l}$c$\end{tabular}}}}%
  \end{picture}%
\endgroup%

    \caption{A quadratic differential on $\no_3$.}
    \label{fig:dqd-no3}
  \end{figure}
\end{example}
This example suggests what the right definition of a quadratic differential on a non-orientable surface ought to be: in the flat picture, rather than allowing gluing via just the maps $z \mapsto \pm z + c$, we also allow $z \mapsto \pm \overline{z} + c$.
This leads to the definition of \emph{dianalytic quadratic differentials} (which we'll abbreviate to DQDs).

\begin{definition}[Dianalytic quadratic differential (adapted from \autocite{Wright2015})]
  A dianalytic quadratic differential is the quotient of a collection of polygons in $\mathbb{C}$, modulo
  certain equivalences.  The quotienting satisfies the following conditions.
  \begin{enumerate}[(1)]
  \item The interiors of the polygons are disjoint.
  \item Each edge is identified with exactly one other edge, and the mapping must be of one of the following
    four forms: $z \mapsto z + c$, $z \mapsto -z + c$, $z \mapsto \overline{z} + c$, or
    $z \mapsto -\overline{z} + c$.
  \item Extending the edge identification map to a small enough open neighbourhood of a point on the edge
    should not map it to an open neighbourhood of the image of the point: in other words, it should get mapped
    to the ``other side'' of the edge.
  \end{enumerate}
  Two such quotiented collections of polygons are considered the same if they differ by a composition of the following
  moves.
  \begin{enumerate}[(1)]
  \item A polygon may be translated, rotated by $\pi$ radians, or reflected across the real or imaginary axis.
  \item A polygon may be cut along a straight line to form two polygons, or two polygons sharing an edge may
    be glued together to form a single polygon.
  \end{enumerate}
\end{definition}

Given a DQD, we can pull it back to the orientation double cover, getting an actual quadratic differential: this operation corresponds to identifying a cotangent vector to a point in $\teich(\no_g)$ to the corresponding cotangent vector in $\teich(\os_{g-1})$.

To verify that $\teich(\no_g)$ is isometrically embedded, all we need to do is verify that the Teichm\"uller geodesic flow takes the quadratic differentials satisfying the symmetry condition $\iota^{\ast}(q) = \overline{q}$ to quadratic differentials that satisfy the symmetry conditions.

\begin{lemma}
  \label{lem:gt-invariance}
  If $q$ satisfies $\iota^{\ast}(q) = \overline{q}$, then for any $t$, $\iota^{\ast}(g_tq) = \overline{g_tq}$.
\end{lemma}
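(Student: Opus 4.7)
The plan is to represent the Teichmüller flow $g_t$, the involution $\iota^{\ast}$, and complex conjugation all as commuting operations on the space of quadratic differentials on $\widetilde{X}$, and then deduce $\iota^{\ast}(g_tq) = \overline{g_tq}$ from the hypothesis $\iota^{\ast}q = \overline{q}$ by a one-line manipulation.

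Concretely, I would work in the polygonal picture: a holomorphic quadratic differential on $\widetilde{X}$ is a collection of polygons in $\mathbb{C} \cong \mathbb{R}^2$ together with edge identifications of the permitted forms. In this language, $g_t$ acts by applying the diagonal matrix $\operatorname{diag}(e^t, e^{-t})$ to every polygon (while leaving the identification pattern untouched), complex conjugation $q \mapsto \overline{q}$ corresponds to applying $J := \operatorname{diag}(1, -1)$ to every polygon, and $\iota^{\ast}$ is induced by the fixed involution $\iota$ of the topological surface $\widetilde{X}$ and so only permutes and relabels the polygons and gluings without reshaping any individual polygon.

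With this setup the two required commutations are essentially visual. For conjugation: $g_t$ and $J$ are both diagonal elements of $GL(2, \mathbb{R})$ and therefore commute as matrices, so $\overline{g_tq} = g_t\overline{q}$. For $\iota^{\ast}$: pullback by $\iota$ acts only on the combinatorial/gluing part of the data, while $g_t$ acts only on the shapes of the polygons, so the two operations affect disjoint pieces of the flat-structure data and therefore commute, giving $\iota^{\ast}(g_tq) = g_t(\iota^{\ast}q)$. Putting these together and using the hypothesis,
\[
    \iota^{\ast}(g_tq) \;=\; g_t(\iota^{\ast}q) \;=\; g_t(\overline{q}) \;=\; \overline{g_tq},
\]
which is the statement of the lemma.

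The one point that needs care, and the step I expect to be the main source of bookkeeping, is that $\iota$ is orientation-reversing and anti-holomorphic, so $\iota^{\ast}q$ is naturally a holomorphic quadratic differential on $\overline{\widetilde{X}}$ rather than on $\widetilde{X}$; one has to be explicit about the identification of $\iota^{\ast}q$ and $\overline{q}$ as objects on the same Riemann surface before the equality $\iota^{\ast}q = \overline{q}$ is meaningful. Passing to the polygonal picture sidesteps this cleanly, since holomorphicity there is automatic and all three operations become completely explicit transformations on polygons and gluings, making the two commutation properties manifest.
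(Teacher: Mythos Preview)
Your proposal is correct and is essentially the same argument as the paper's, just phrased in the polygonal picture rather than in period coordinates: the paper computes $\langle \sqrt{g_tq}, \iota(a)\rangle$ directly for homology classes $a$ and checks it equals $\overline{\langle \sqrt{g_tq}, a\rangle}$, while you factor the computation into the two commutations $g_t \iota^\ast = \iota^\ast g_t$ and $g_t\,\overline{\cdot} = \overline{\cdot}\,g_t$. The paper's own remark immediately following the proof makes explicit that the content is exactly your observation that conjugation is $\mathrm{diag}(1,-1)$ and commutes with the diagonal geodesic-flow matrices, so the two presentations are interchangeable.
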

\begin{proof}
  Recall that if $q$ satisfies the given condition, we must have the following hold for any homology class $a$.
  \begin{equation}
    \label{eq:back-14}
    \langle \sqrt{q}, \iota(a) \rangle = \overline{ \langle \sqrt{q}, a \rangle}
  \end{equation}
  If $q$ is not the global square of an abelian differential, we may have to pass to the holonomy double
  cover. Observe now what $g_t$ does to $q$.
  \begin{equation}
    \label{eq:back-15}
    \langle \sqrt{g_t q}, \iota(a) \rangle = e^t \mathrm{Re} \langle \sqrt{q}, \iota(a) \rangle + i e^{-t} \mathrm{Im}\langle \sqrt{q}, \iota(a) \rangle
  \end{equation}
  Using \eqref{eq:back-14}, we simplify \eqref{eq:back-15} to the following.
  \begin{align}
    \label{eq:back-16}
    \langle \sqrt{g_t q}, \iota(a) \rangle &= e^t \mathrm{Re} \langle \sqrt{q}, a \rangle - i e^{-t} \mathrm{Im}\langle \sqrt{q}, a \rangle \\
                                           &= \overline{\langle \sqrt{g_tq}, a \rangle}
  \end{align}
  This proves the lemma.
\end{proof}
\begin{remark}
  The key idea that diagonal matrices commute:
  the conjugation action is really multiplication by $
  \begin{pmatrix}
    1 & 0 \\
    0 & -1
  \end{pmatrix}
  $ which happens to commute with the diagonal matrices of determinant $1$, which are exactly the matrices
  corresponding to geodesic flow. On the other hand, the conjugation matrix does not commute with the horocycle
  flow matrices, and that shows that the horocycle flow is not well defined on the cotangent bundle
  of $\teich(\no_g)$.
\end{remark}
Lemma \ref{lem:gt-invariance} shows that the Teichm\"uller geodesic flow for the cotangent bundle of $\teich(\no_g)$ is the restriction of the geodesic flow for the ambient space $\teich(\os_{g-1})$.

\autoref{thm:embedding-teich} gives us an alternative perspective into the action of $\mcg(\no_g)$ on $\teich(\no_g)$.
$\mcg(\no_g)$ can be thought of as the subgroup of $\mcg(\os_{g-1})$ that stabilizes a totally real isometrically embedded submanifold $\teich(\no_g)$.
With this perspective, $\mcg(\no_g)$ can be thought of as the higher dimensional generalization of the subgroups obtained by stabilizing Teichmüller discs, i.e. Veech groups.

We now state a few classical results about measured foliations on non-orientable surfaces that show why the theory diverges significantly from the orientable case.

A measured foliation on a non-orientable surface $\no_g$ is singular foliation along with an associated transverse measure, up to equivalence by Whitehead moves.
Any leaf of a measured foliation can either be non-compact or compact: in the former case, the closure of the non-compact leaf fills out a subsurface.
Restricted to the subsurface given by the closure of a non-compact leaf, the foliation is minimal, i.e. the orbit of every point under the flow given by the foliation is dense.
For a compact leaf, there are two possibilities for the topology of the subsurface containing it: if the closed leaf is the core curve or the boundary curve of an embedded M\"obius strip, then the subsurface is the maximal neighbourhood of the periodic leaf that is foliated by periodic leaves as well, and this turns out to be an embedded M\"obius strip.
If the compact leaf is not the core curve or the boundary curve of an embedded M\"obius strip, then it is the core curve of an embedded cylinder, and the maximal neighbourhood of the periodic leaf foliated by periodic leaf is an embedded cylinder.
The identification of leaves with associated subsurfaces lets us decompose a measured foliation into its minimal components.
Note the slightly confusing terminology: when the minimal component is a M\"obius strip or a cylinder, then the foliation restricted to the component is not minimal, but when the minimal component has higher genus, then the foliation restricted to that component indeed is minimal.

We denote the set of measured foliations on $\no_g$ by $\mf(\no_g)$, the set of foliations whose minimal components do not contain a M\"obius strip by $\mf^+(\no_g)$, and the set of foliations whose minimal components contain at least one M\"obius strip by $\mf^-(\no_g)$.
Via the standard identification between simple closed curves and measured foliations, we can associate $\mathbb{Q}$-weighted two-sided multicurves on $\no_g$ to a subset of $\mf^+(\no_g)$, denoted by $\mf^+(\no_g, \mathbb{Q})$.

Quotienting out $\mf(\no_g)$ by the $\mathbb{R_+}$-action given by scaling the transverse measure gives us the set of projective measured foliations $\pmf(\no_g)$.
The subsets $\mf^-(\no_g)$, $\mf^+(\no_g)$, and $\mf^+(\no_g, \QQ)$ are $\mathbb{R}$-invariant, and thus descend to their projective versions $\pmf^-(\no_g)$, $\pmf^+(\no_g)$, and $\pmf^+(\no_g, \QQ)$.
The set $\pmf(\no_g)$ is the boundary of the Teichm\"uller space of $\no_g$, and admits a continuous mapping class group action.
It is when considering the mapping class group action that we see differences between the orientable and the non-orientable case.

\begin{theorem}[Proposition 8.9 of \cite{gendulphe_whats_2017}]
  \label{thm:full-non-minimal}
  The action of $\mcg(\no_g)$ (for $g \geq 2$) on $\pmf(\no_g)$ is not minimal.
  In fact, the action is not even topologically transitive.
\end{theorem}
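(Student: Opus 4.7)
The plan is to exhibit a nonempty proper closed $\mcg(\no_g)$-invariant subset of $\mf(\no_g)$; this will simultaneously preclude both minimality and topological transitivity (in the ``dense orbit'' sense) of the action. The natural candidate is $\mf^+(\no_g)$, the set of measured foliations with no minimal component supported on a M\"obius strip.

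First I would check that $\mf^+(\no_g)$ and $\mf^-(\no_g)$ are both $\mcg(\no_g)$-invariant. This is immediate from the definitions: having a minimal component supported on a M\"obius strip (equivalently, a one-sided compact leaf) is a purely topological property of the underlying singular foliation, and is therefore preserved by any self-homeomorphism of $\no_g$. Next I would observe that both pieces are nonempty. The core of any crosscap is a one-sided simple closed curve and gives a point of $\mf^-(\no_g)$, while the two-sided boundary of a small regular neighborhood of such a crosscap (or indeed any two-sided simple closed curve) gives a point of $\mf^+(\no_g)$; this uses only that $g \geq 2$. Finally, by the Danthony-Nogueira result quoted in the introduction, $\pmf^-(\no_g)$ is open in $\pmf(\no_g)$, so $\mf^-(\no_g)$ is open in $\mf(\no_g)$ and therefore $\mf^+(\no_g)$ is closed.

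Putting the three ingredients together, for any $\lambda \in \mf^+(\no_g)$ the orbit $\mcg(\no_g)\cdot \lambda$ lies in the closed set $\mf^+(\no_g)$, so its closure is contained in the proper subset $\mf^+(\no_g) \subsetneq \mf(\no_g)$. Hence no such $\lambda$ has dense orbit and the action is not topologically transitive; since $\mf^+(\no_g)$ is a proper, nonempty, closed, invariant set, the action is in particular not minimal. The only step in the argument carrying any real content is the $\mcg(\no_g)$-invariance of the decomposition $\mf(\no_g) = \mf^+(\no_g) \sqcup \mf^-(\no_g)$, and this is a purely topological observation rather than a computation, so no step presents a serious obstacle.
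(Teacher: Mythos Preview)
Your argument for non-minimality is correct and matches the paper's: $\mf^+(\no_g)$ is a nonempty, proper, closed, $\mcg(\no_g)$-invariant subset, and that is exactly what non-minimality requires.

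There is, however, a genuine gap in your argument for non-topological-transitivity. You show that no $\lambda \in \mf^+(\no_g)$ has a dense orbit, and then conclude that the action is not topologically transitive. But ``not topologically transitive'' means that \emph{no} point has a dense orbit; you have not ruled out the possibility that some $\lambda \in \mf^-(\no_g)$ has a dense orbit. This is not automatic from your setup: $\mf^-(\no_g)$ is open, dense, and invariant, so an orbit contained in $\mf^-(\no_g)$ could in principle have closure equal to all of $\mf(\no_g)$. Equivalently, in the ``two open sets'' formulation of topological transitivity, the difficulty is that your closed invariant set $\mf^+(\no_g)$ has empty interior (its complement is dense), so you cannot produce two disjoint nonempty open invariant sets from the decomposition $\mf^+ \sqcup \mf^-$ alone.

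The paper's remark indicates how Gendulphe handles this: one constructs a non-constant $\mcg(\no_g)$-invariant \emph{continuous} function on $\mf(\no_g)$. This is strictly more information than the existence of a closed invariant set, and it kills topological transitivity immediately, since any such function would be constant on a dense orbit and hence constant everywhere. So the missing ingredient in your approach is precisely this upgrade from ``invariant closed set'' to ``invariant continuous function''; once you have the latter, both conclusions follow at once.
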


Compare this to the case of $\mcg(\os_g)$.
\begin{theorem}[Theorem 6.19 of \cite{fathi2012thurston}]
  \label{thm:orientable-orbit-closure}
  The action of $\mcg(\os_g)$ on $\pmf(\os_g)$ is minimal.
\end{theorem}

\begin{remark}
  The proof of non minimality and topological non-transitivity in the non-orientable case follow from the fact that one can construct a $\mcg(\no_g)$-invariant continuous function on $\mf(\no_g)$.
  That is because starting with a foliation in $\mf^+(\no_g)$, it is impossible to approximate an element of $\mf^-(\no_g)$ since one does not have Dehn twists about one-sided curves.
\end{remark}

One can now consider subspaces of $\mf(\no_g)$ where the $\mcg(\no_g)$ action might be nicer. There are two natural subspaces: $\mf^+(\no_g)$, and $\mf^{-}(\no_g)$.
Danthony-Nogueira proved the following theorem about $\mf^{-}(\no_g)$ in \cite{ASENS_1990_4_23_3_469_0}.

\begin{theorem}[Theorem II of \cite{ASENS_1990_4_23_3_469_0}]
  \label{thm:DN90-2}
  $\mf^{-}(\no_g)$ is an open dense subset of $\mf(\no_g)$ of full Thurston measure.
\end{theorem}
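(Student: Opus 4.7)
The plan is to reduce the theorem to a statement about interval exchange transformations with flips (IETFs) and then invoke Nogueira's theorem on generic periodicity. Fix a stratum of quadratic differentials (equivalently, a top-dimensional cell in the PL/train-track coordinates) on $\no_g$ and pick a transverse arc $I$ to the horizontal foliation of a generic differential. Because transport along horizontal leaves can reverse orientation on $\no_g$, the first-return map $T \colon I \to I$ is not an ordinary IET but an IETF, and the Thurston measure on this cell of $\mf(\no_g)$ pulls back to Lebesgue measure on the parameter simplex of such IETFs (lengths of subintervals, with the combinatorial datum fixed).

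For the full-measure claim, the key input is Nogueira's result \cite{nogueira_1989} that, with respect to Lebesgue measure, almost every IETF has a periodic orbit. A periodic orbit of $T$ corresponds to a closed horizontal leaf $\gamma$ on $\no_g$ together with a one-parameter family of parallel closed leaves filling out either a cylinder (if the holonomy along $\gamma$ is orientation-preserving) or a Möbius strip (if it is orientation-reversing). The orientation type of $\gamma$ is determined by the parity of the number of flipped intervals visited along the periodic orbit; requiring this parity to be even is a positive-codimension condition inside the Lebesgue-generic set produced by Nogueira (roughly, it cuts out periodic orbits supported on the unflipped subsystem, which is itself a standard IET and hence has codimension $\geq 1$ for periodic orbits to exist in the combinatorics that arise on a non-orientable surface). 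Hence almost every parameter yields a periodic orbit with odd flip parity, which produces a Möbius minimal component and places the foliation in $\mf^-(\no_g)$. Summing over finitely many cells of the PL structure gives full Thurston measure globally.

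Openness is easier: if $\lambda \in \mf^-(\no_g)$ contains a foliated Möbius strip with core one-sided curve $\gamma$, then in PL coordinates adapted to a train track carrying $\lambda$ the transverse weight on $\gamma$ is strictly positive, and positivity of a coordinate is an open condition in $\mf(\no_g)$. Density is even more direct: the rational foliations $\mf^+(\no_g,\mathbb{Q})$ (weighted two-sided multicurves) are dense in $\mf(\no_g)$, and given any such rational foliation one can add a small multiple of a disjoint one-sided simple closed curve to land in $\mf^-(\no_g)$ while remaining arbitrarily close; taking a diagonal approximation gives density of $\mf^-(\no_g)$ in $\mf(\no_g)$.

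The main obstacle is the full-measure step, specifically showing that Nogueira's generic periodic orbit can be arranged to correspond to a \emph{one-sided} closed leaf rather than a two-sided one. This requires a careful bookkeeping of the flip combinatorics of the IETF associated with the return map, and an argument that the event ``the periodic orbit visits an even number of flipped intervals'' is itself a Lebesgue-null (or at worst positive-codimension) condition in the parameter space; the openness and density steps are essentially formal consequences of the PL structure on $\mf(\no_g)$ once this is done.
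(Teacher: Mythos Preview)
The paper does not prove this statement at all: it is quoted verbatim as Theorem~II of Danthony--Nogueira and used as a black box. So there is no ``paper's own proof'' to compare against; what follows is an assessment of your sketch on its own terms.

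Your openness argument is essentially fine. Your density argument, however, is broken as written: you assert that two-sided rational multicurves are dense in $\mf(\no_g)$, but this is precisely what the theorem denies, since their closure lies in $\mf^+(\no_g)$, the complement of the open set you are trying to show is dense. Even restricting to $\lambda\in\mf^+(\no_g)$, the step ``add a small multiple of a disjoint one-sided curve'' fails when the approximating two-sided multicurve is a full pants decomposition, since every pair of pants is orientable and hence carries no one-sided curve. Fortunately this step is redundant: density follows immediately from the full-measure claim, because Thurston measure has full support.

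The real gap is in the full-measure step. Nogueira's 1989 theorem gives you a periodic orbit for almost every IETF, but you need a periodic orbit with \emph{odd} flip parity to produce a M\"obius component. Your justification---that even parity ``cuts out periodic orbits supported on the unflipped subsystem''---is simply false: a periodic orbit can traverse flipped intervals an even (nonzero) number of times and still have even parity, so even parity is not a codimension-one condition describable as ``living in an ordinary IET''. Indeed, on an orientable surface every periodic leaf is two-sided, so two-sidedness of periodic leaves is certainly not automatically rare; some genuine use of the non-orientable combinatorics is required. What Danthony--Nogueira actually do is run a Rauzy--Veech-type induction adapted to non-orientable surfaces and show it halts almost surely by producing a one-sided saddle connection; this is the substantive content you are missing, and it does not follow from the 1989 result plus a parity count.
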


\autoref{thm:DN90-2} means that the $\mcg(\no_g)$-orbit closure in $\pmf(\no_g)$ of any point in $\teich(\no_g)$ is contained in $\pmf^+(\no_g)$.
In the case of $\mcg(\os_g)$, $\pmf^+(\os_g) = \pmf(\os_g)$, and the orbit closure is actually all of $\pmf(\os_g)$.
\begin{corollary}[Corollary of \autoref{thm:orientable-orbit-closure}]
  For any $x \in \teich(\os_g)$, $\overline{\mcg(\os_g) \cdot x} \cap \pmf(\os_g) = \pmf(\os_g)$.
\end{corollary}

\autoref{thm:full-non-minimal} and \autoref{thm:DN90-2} suggest that studying the $\mcg(\no_g)$ dynamics restricted to $\mf^{-}(\no_g)$ will be hard since one will not have minimality, or ergodicity with respect to any measure with full support.
In Section \ref{sec:lower-bound-limit-set}, we get a lower bound for the set on which $\mcg(\no_g)$ acts minimally.

\subsection{Limit sets of mapping class subgroups}
\label{sec:backgr-limit-sets}

The first results on limit sets of subgroups of mapping class groups were obtained by Masur for handlebody subgroups \cite{masur_1986}, and McCarthy-Papadopoulos for general mapping class subgroups \cite{McCarthy1989}.
They defined two distinct notions of limit sets; while they did not give distinct names to the two different definitions, we will do so for the sake of clarity.
\begin{definition}[Dynamical limit set]
  Given a subgroup $\Gamma$ of the mapping class group, the dynamical limit set $\dynlim(\Gamma)$ is the minimal closed invariant subset of $\pmf$ under the action of $\Gamma$.
\end{definition}
\begin{definition}[Geometric limit set]
  Given a subgroup $\Gamma$ of the mapping class group, and a point $x$ in the Teichmüller space, its boundary orbit closure $\Lambda_{\mathrm{geo}, x}(\Gamma)$ is intersection of its orbit closure with the Thurston boundary, i.e. $\overline{\Gamma x} \cap \pmf$.
  The geometric limit set is the union of all boundary orbit closures, as we vary $x$ in the Teichmüller space, i.e. $\geolim(\Gamma) = \bigcup_{x \in \teich} \Lambda_{\mathrm{geo}, x}(\Gamma)$.
\end{definition}

\begin{remark}
  The specific family of subgroups considered by McCarthy-Papadopoulos were subgroups containing at least two non-commuting pseudo-Anosov mapping classes, in which case the dynamical limit set is unique.
  The mapping class groups $\mcg(\no_{g})$ considered as a subgroup of $\mcg(\os_{g-1})$ certainly satisfies this property, letting us talk about \emph{the} dynamical limit set.
\end{remark}

Both of these definitions are natural generalizations of the limit sets of Fuchsian groups acting on $\HH^2$.
In the hyperbolic setting, the two notions coincide, but for mapping class subgroups, the dynamical limit set may be a proper subset of the geometric limit set.

For simple enough subgroups, one can explicitly work out $\dynlim(\Gamma)$ and $\geolim(\Gamma)$: for instance, when $\Gamma$ is the stabilizer of the Teichmüller disc associated to a Veech surface, $\dynlim(\Gamma)$ is the visual boundary of the Teichmüller disc, which by Veech dichotomy, only consists of either uniquely ergodic directions on the Veech surface, or the cylinder directions, where the coefficients on the cylinders are their moduli in the surface.
On the other hand, $\geolim(\Gamma)$ consists of all the points in $\dynlim(\Gamma)$, but it additionally contains all possible convex combinations of the cylinders appearing in $\dynlim(\Gamma)$ (see Section 2.1 of \cite{2007math......2034K}).

The gap between $\geolim$ and $\dynlim$ suggests the following operation on subsets of $\pmf$, which we will call \emph{saturation}.
\begin{definition}[Saturation]
  Given a projective measured foliation $\lambda$, we define its saturation $\expansion(\lambda)$ to be the image in $\pmf$ of set of all non-zero measures invariant measures on the topological foliation associated to $\lambda$.
  Given a subset $\Lambda$, we define its saturation $\expansion(\Lambda)$ to be the union of saturations of the projective measured laminations contained in $\Lambda$.
\end{definition}
Observe that for a uniquely ergodic foliation $\lambda$, $\expansion(\lambda) = \{\lambda\}$, for a minimal but not uniquely ergodic $\lambda$, $\expansion(\lambda)$ is the convex hull of all the ergodic measures supported on the topological lamination associated to $\lambda$, and for a foliation with all periodic leaves, $\expansion(\lambda)$ consists of all foliations that can be obtained by assigning various weights to the core curves of the cylinders.

Going back to the example of the stabilizer of the Teichmüller disc of a Veech surface, we see that $\geolim(\Gamma) = \expansion(\dynlim(\Gamma))$.
One may ask if this is always the case.
\begin{question}
  Is $\geolim(\Gamma) = \expansion(\dynlim(\Gamma))$ for all $\Gamma$?
\end{question}
We know from \autoref{thm:rational-approximation} that $\geolim(\Gamma)$ is contained in $\expansion(\dynlim(\Gamma))$ when $\Gamma = \mcg(\no_g)$.

McCarthy-Papadopoulos also formulated an equivalent definition of $\dynlim(\Gamma)$, which is easier to work with in practice.

\begin{untheorem}[Theorem 4.1 of \cite{McCarthy1989}]
  \label{thm:equivalence-of-limit-sets}
  $\dynlim(\Gamma)$ is the closure in $\pmf$ of the stable and unstable foliations of all the pseudo-Anosov mapping classes in $\Gamma$.
\end{untheorem}

\subsection*{List of notation}
Here we describe some of the more commonly used symbols in the paper.
\begin{itemize}
\item[] $\os_g$: The compact orientable surface of genus $g$.
\item[] $\no_{g}$: The compact non-orientable surface of genus $g$.
\item[] $\iota$: The deck transformation of the orientation double cover of a non-orientable surface.
\item[] $\teich(S)$: The Teichm\"uller space of $S$.
\item[] $\systole(\no_d)$: The set of points in $\teich(\no_d)$ where no one-sided curve is shorter than
  $\varepsilon$.
% \item[] $g_t$: The Teichm\"uller geodesic flow on quadratic differentials.
\item[] $\mcg(S)$: The mapping class group of $S$.
% \item[] $\scc(S)$: The set of all simple closed curves on $S$.
\item[] $\mf(S)$: The space of measured foliations on $S$.
\item[] $\pmf(S)$: The space of projective measured foliations on $S$.
\item[] $\mf^+(\no_d)$, $\pmf^+(\no_d)$: The set of (projective) measured foliations on $\no_d$ containing
  no one-sided leaves.
\item[] $\mf^-(\no_d)$, $\pmf^-(\no_d)$: The set of (projective) measured foliations on $\no_d$ containing
  some one-sided leaf.
\item[] $\mf(S; \QQ)$, $\pmf(S; \QQ)$: The set of all (projective) weighted rational multicurves on $S$.
\item[] $\geolim(\Lambda)$: The geometric limit set of the discrete group $\Lambda$.
\item[] $\dynlim(\Lambda)$: The dynamical limit set of the discrete group $\Lambda$.
\item[] $\ell_i(\gamma)$: The hyperbolic length of $\gamma$ on the surface $m_i$, where $\left\{ m_i \right\}$ is a sequence in the Teichmüller space. We use this when we are only talking about hyperbolic lengths. When talking about both hyperbolic and flat lengths, we disambiguate them using the following symbols.
\item[] $\lhyp(M, \gamma)$: The hyperbolic length of $\gamma$ with respect to the hyperbolic structure on $M \in \teich(S)$. We will suppress $M$ when it is clear from context.
\item[] $\lflat(q, \gamma)$: The flat length of $\gamma$ with respect to the flat structure given by the DQD $q$. We will suppress $q$ when it is clear from context.
\item[] $\mu_{c}$: The probability measure on a transverse arc given by the closed curve $c$.
\end{itemize}

\section{Lower bound for the limit set}
\label{sec:lower-bound-limit-set}

A natural lower bound for $\dynlim(\no_g)$ is the closure of the set of rational two-sided multicurves $\pmf^+(\no_g, \QQ)$.
For any $\lambda \in \pmf^+(\no_g, \QQ)$, and any psuedo-Anosov $\gamma$, conjugating $\gamma$ with large enough powers of the Dehn multi-twist given by $\lambda$ gives us a sequence of pseudo-Anosov maps whose stable foliation approaches $\lambda$, which shows that $\dynlim(\no_g)$ must contain $\lambda$.
Note that the same argument does not work if $\lambda \in \pmf^-(\no_g, \QQ)$, since one cannot Dehn twist about one-sided curves.
In Section \ref{sec:upper-bound-limit-set}, we show that the geometric limit set is indeed contained in the complement of $\pmf^-(\no_g)$.

In \cite{gendulphe_whats_2017}, Gendulphe made the following conjecture about $\overline{\pmf^+(\no_g, \QQ)}$.
\begin{conjecture}[Conjecture 9.1 of \cite{gendulphe_whats_2017}]
  \label{conj:gendulphe-1}
  For $g \geq 4$, $\pmf^+(\no_g) = \overline{\pmf^+(\no_g, \QQ)}$.
\end{conjecture}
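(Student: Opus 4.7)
The plan is to establish the two inclusions separately. For $\overline{\pmf^+(\no_g, \QQ)} \subseteq \pmf^+(\no_g)$, I would use Theorem~\ref{thm:DN90-2}: since $\pmf^-(\no_g)$ is open in $\pmf(\no_g)$, its complement $\pmf^+(\no_g)$ is closed, and it tautologically contains $\pmf^+(\no_g,\QQ)$, so it contains the closure. This direction is essentially free.

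The reverse inclusion $\pmf^+(\no_g) \subseteq \overline{\pmf^+(\no_g, \QQ)}$ is the substantive content. The strategy is to decompose an arbitrary $\lambda \in \pmf^+(\no_g)$ into its minimal components $\lambda_1, \ldots, \lambda_k$ (each either periodic, or minimal of higher genus type) and approximate each component by rational two-sided multicurves separately, then combine. Three of the four cases are already handled by Theorem~\ref{thm:rational-approximation}: two-sided periodic components are literally in $\pmf^+(\no_g, \QQ)$; uniquely ergodic components can be approximated by closing up long leaf segments, using unique ergodicity to force the resulting counting measures to converge to the given transverse measure; and orientable ergodic components can be approximated by lifting to the holonomy double cover, where the foliation is abelian and standard Hubbard--Masur style approximation is available. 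The combining step requires verifying that summing approximations of the components yields an approximation of $\lambda$ in $\pmf$, which should follow from continuity of the sum on transverse measures.

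The genuine obstacle is the remaining case: a minimal component $\lambda_j$ that is two-sided and \emph{not} uniquely ergodic. Here approximating by closing long leaf segments is dangerous, since the resulting multicurves might converge to a different ergodic measure supported on the same topological foliation, exactly the phenomenon captured by the saturation operator $\expansion$. My approach would be to work directly with a train track $\tau$ carrying $\lambda_j$: the polytope $P(\tau)$ of admissible weights is cut out by rational linear equations, and the property of being two-sided should correspond to a sub-polytope $P^+(\tau) \subseteq P(\tau)$ defined by integrality-modulo-$2$ conditions coming from the crosscap intersection pattern (tracking, for each branch of $\tau$, whether the corresponding strip crosses a one-sided curve an odd or even number of times). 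If $\lambda_j$ lies in the relative interior of $P^+(\tau)$, then rational points of $P^+(\tau)$ are dense near $\lambda_j$, and each rational point corresponds to a weighted two-sided multicurve.

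The main difficulty I expect is establishing that $P^+(\tau)$ is cut out by the right rational conditions, and that $\lambda_j$ can be placed in its relative interior — equivalently, that we can always find a train track carrier $\tau$ of $\lambda_j$ whose branches inherit a consistent side/orientation labeling from the crosscaps. Splitting $\tau$ (or passing to an iterate in a subdivision sequence, as in the Rauzy--Veech picture adapted to non-orientable surfaces) should produce carriers with increasingly transparent sidedness structure, but verifying that this process terminates with a train track whose two-sided weights are dense at $\lambda_j$ is the technical heart of the argument. This is precisely where I would expect a careful train-track analysis, in the spirit of \cite{erlandsson2021mapping}, to be unavoidable.
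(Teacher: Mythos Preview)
The paper does not prove this statement; it is explicitly stated as a \emph{conjecture} (Conjecture~\ref{conj:gendulphe-1}), and the paper proves only the weaker Theorem~\ref{thm:rational-approximation}, which covers exactly the three cases you list as ``already handled''. There is therefore no proof in the paper to compare your proposal against. The paper's own contribution stops precisely at the point you identify as the ``genuine obstacle'': a minimal, non-orientable, non-uniquely-ergodic component. For such a component the paper only establishes the ``furthermore'' clause of Theorem~\ref{thm:rational-approximation}, namely that \emph{some} transverse measure on the same topological foliation lies in $\overline{\pmf^+(\no_g,\QQ)}$, not that the given one does.

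Your proposed attack on the remaining case --- carrying $\lambda_j$ on a train track $\tau$, identifying a rational sub-polytope $P^+(\tau)$ corresponding to two-sided weights, and arguing density of rational points near $\lambda_j$ --- is a reasonable outline, and indeed the paper itself notes (in the paragraph ``Another paper on mapping class group orbit closures'') that Erlandsson, Gendulphe, Pasquinelli, and Souto \cite{erlandsson2021mapping} prove the full conjecture by ``careful analysis of train track charts carrying various measured laminations'', which is essentially the programme you sketch. So your proposal is not a comparison with this paper's proof but rather a plan to go beyond it, and the approach you suggest is the one known to succeed. The caveat you raise --- that the delicate point is showing $\lambda_j$ sits in the relative interior of the correct rational sub-polytope after enough splitting --- is accurate: this is exactly the step the present paper does not attempt, and the step that requires the machinery of \cite{erlandsson2021mapping}.
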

We prove a slightly weaker version of the above conjecture, by describing a subset of the foliations that can be approximated by multicurves in $\pmf^+(\no_g, \QQ)$.
To state the theorem, we need to define what it means for a minimal foliation to be orientable.
\begin{definition}[Orientable foliation]
  A local orientation on a foliation is the choice of a locally constant tangent direction on the leaves in a small open set.
  If the local orientation can be extended to an entire minimal foliation, the foliation is said to be orientable.
\end{definition}
In the setting of orientable surfaces, the vertical foliations of translation surfaces are orientable, while there are some directions in half-translation surfaces where the foliation is non-orientable.
There exist similar examples of orientable and non-orientable foliations on non-orientable surfaces.

Having defined the notion of orientable foliations, we can state the main theorem of this section.
\begin{theorem}
  \label{thm:rational-approximation}
  A foliation $\lambda \in \pmf^+(\no_g)$ can be approximated by foliations in $\pmf^+(\no_g, \QQ)$ if all the minimal components $\lambda_j$ of $\lambda$ satisfy one of the following criteria.
  \begin{enumerate}[(i)]
  \item $\lambda_j$ is periodic.
  \item $\lambda_j$ is ergodic and orientable.
  \item $\lambda_j$ is uniquely ergodic.
  \end{enumerate}
  Furthermore, if $\lambda_j$ is minimal, but not uniquely ergodic, there exists some other foliation $\lambda_j^{\prime}$ supported on the same topological foliation as $\lambda_j$ that can be approximated by elements of $\pmf^+(\no_g, \QQ)$.
\end{theorem}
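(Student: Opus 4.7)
The plan is to reduce the problem to approximating each minimal component $\lambda_j$ independently by two-sided rational weighted multicurves, then assemble. The case $\lambda_j$ periodic is immediate since a periodic component of a foliation in $\pmf^+(\no_g)$ is a cylinder with a two-sided core curve, so its class already lies in $\pmf^+(\no_g,\QQ)$. For the remaining cases, the main tool is the orientation double cover $p : \os_{g-1} \to \no_g$ together with the deck involution $\iota$. The lift $\widetilde{\lambda_j} = p^{-1}(\lambda_j)$ is an $\iota$-invariant measured foliation on $\os_{g-1}$, and a key preliminary observation is that orientability of $\lambda_j$ (in the sense defined in the paper) is equivalent to $\widetilde{\lambda_j}$ having two connected topological components swapped by $\iota$; non-orientable minimal components lift to a connected $\iota$-invariant foliation.

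The core construction is the classical one on the orientable cover: choose a short transverse arc $\widetilde{\tau}$ to $\widetilde{\lambda_j}$ and, for a point $x \in \widetilde{\tau}$, close up the leaf segment from $x$ to its $n$-th return $T^n(x)$ by a short transverse jump, producing a simple closed curve $\widetilde{c}_{x,n}$ on $\os_{g-1}$. The counting measures $\mu_{\widetilde{c}_{x,n}}$ on $\widetilde{\tau}$ are probability measures, so by weak-$*$ compactness they admit subsequential limits, and any such limit is a transverse invariant measure for $\widetilde{\lambda_j}$. We then form the $\iota$-symmetrized multicurve $\widetilde{c}_{x,n} \cup \iota(\widetilde{c}_{x,n})$. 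Whenever this is a disjoint simple union, it descends under $p$ to a two-sided rational multicurve $c_{x,n}$ on $\no_g$, and the projected counting measure on $p(\widetilde{\tau})$ converges (along the subsequence) to a transverse invariant measure for $\lambda_j$.

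In the orientable case (ii), $\widetilde{c}_{x,n}$ approximates one component of $\widetilde{\lambda_j}$ while $\iota(\widetilde{c}_{x,n})$ approximates the other, so for all sufficiently large $n$ these two curves lie in disjoint neighborhoods of the two components and the disjointness needed for a two-sided descent is automatic; ergodicity of the chosen measure lets us select base points and return times via the Birkhoff theorem so that $\mu_{\widetilde{c}_{x,n}}$ converges to the lift of $\mu_j$. In the uniquely ergodic case (iii), unique ergodicity collapses the set of subsequential limits to a single ray, so every subsequence converges to $\lambda_j$; the remaining task is to show that in the non-orientable subcase (where $\widetilde{\lambda_j}$ is connected) we can arrange $\widetilde{c}_{x,n}$ and $\iota(\widetilde{c}_{x,n})$ to be disjoint, so the descent is two-sided rather than one-sided. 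This is done by choosing $\widetilde{\tau}$ to avoid the $\iota$-fixed set and exploiting that the closed curves produced by leaf-closing generically fail to be $\iota$-invariant as sets, together with the fact that by unique ergodicity small perturbations of the construction still converge to $\lambda_j$.

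The furthermore statement then follows from the same symmetrized leaf-closing construction without any ergodicity hypothesis: the sequence of two-sided rational multicurves $c_{x,n}$ produced on $\no_g$ is precompact in $\pmf(\no_g)$, so some subsequence converges to a foliation $\lambda_j'$, and since all counting measures are transverse invariant for the fixed topological foliation, $\lambda_j'$ is supported on the same topological lamination as $\lambda_j$. I expect the principal obstacle to be the non-orientable uniquely ergodic subcase of (iii): ensuring that the $\iota$-symmetrization of leaf-closings does not collapse to a one-sided curve on $\no_g$ requires a delicate genericity argument on the cover, and verifying that unique ergodicity survives this selection of a ``two-sided'' subsequence is the technical heart of the proof.
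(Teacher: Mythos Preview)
Your double-cover approach is genuinely different from the paper's intrinsic one, but it has a real gap. First, the claimed equivalence ``$\lambda_j$ orientable $\iff$ $\widetilde{\lambda_j}$ disconnected'' is wrong: disconnectedness of the lift detects orientability of the supporting \emph{subsurface}, not of the foliation (the suspension of a minimal IET with flips is an orientable minimal foliation on a non-orientable surface), so case (ii) already contains connected-lift instances. Second, and more fundamentally, in every connected-lift case you need $\widetilde{c}_{x,n}$ and $\iota(\widetilde{c}_{x,n})$ to be \emph{disjoint} for their union to descend to a simple two-sided curve on $\no_g$; mere non-$\iota$-invariance gives distinctness, not disjointness. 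When $\widetilde{\lambda_j}$ is minimal and connected, both curves are long and nearly carried by the same minimal lamination, so they will intersect an unbounded number of times as $n \to \infty$ (their normalized intersection number tends to zero, but the raw count does not). Avoiding the $\iota$-fixed set is vacuous since the deck involution is free, and no genericity or small-perturbation argument removes these intersections. I do not see how to close this within your framework.

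The paper works directly on $\no_g$: it follows a generic (equidistributing) leaf through a short transverse arc and at each return $p_i$ records whether the local \emph{surface} orientation comes back flipped. Unflipped returns yield two-sided closings immediately. The idea you are missing handles the case where returns are eventually always flipped: the leaf arc from $p_{i-1}$ to $p_i$, closed by a short transverse jump, is then automatically two-sided, because two flips compose to the identity. These curves $c_i$ need not individually equidistribute, but their length-weighted average does (it is essentially a long segment of the original leaf), and a convexity argument in the finite-dimensional simplex of invariant probability measures then extracts a subsequence of the $c_i$ converging to the target ergodic measure. This flip-tracking together with the convexity extraction is exactly what substitutes for the disjointness you cannot obtain on the cover.
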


Before we prove this result, we need to define the \emph{orbit measure} associated to simple curve, and define what it means for an orbit measure to be \emph{almost invariant}.
Consider an arc $\eta$ transverse to a measured foliation $\lambda$.
We assign one of the sides of $\eta$ to be the ``up'' direction, and the other side to be the ``down'' direction.
This lets us define the first return map to $T$.
\begin{definition}[First return map]
  The first return map $T$ maps a point $p \in \eta$ to the point obtained by flowing along the foliation in the ``up'' direction until the flow intersects $\eta$ again.
  The point of intersection is defined to be $T(p)$.
  If the flow terminates at a singularity, $T(p)$ is left undefined: there are only countable many points in $\eta$ such that this happens.
\end{definition}
Since $\lambda$ is a measured foliation, it defines a measure on $\eta$: we can scale it so that it is a probability measure.
It follows from the definition of transverse measures that the measure is $T$-invariant.
It is a classical result of Katok \cite{zbMATH03467479}  and Veech \cite{Veech1978} that the set of $T$-invariant probability measures is a finite dimensional simplex contained in the Banach space of bounded signed measures on $\eta$.
Given an orbit of a point $p$ under the $T$-action of length $L$, we construct a probability measure on $\eta$, called the orbit measure of $p$.
\begin{definition}[Orbit measure]
  The orbit measure of length $L$ associated to the point $p$ is the following probability measure on $\eta$.
  \begin{align*}
    \mu_{p, L} \coloneqq \frac{1}{L} \sum_{i=0}^{L-1} \delta_{T^i(p)}
  \end{align*}
  Here, $\delta_{x}$ is the Dirac delta measure at the point $x$.
\end{definition}
One might expect that if a point $p$ equidistributes, then a long orbit measure starting at $p$ will be ``close'' to an invariant measure.
We formalize this notion by metrizing the Banach space of signed finite measures on $\eta$.
\begin{definition}[Lèvy-Prokhorov metric]
  Define $\norm{\cdot}_{\mathrm{BL}}$ denote the bounded Lipschitz norm on the space of continuous functions on $\eta$.
  \begin{align*}
    \norm{f}_{\mathrm{BL}} \coloneqq \norm{f}_{\infty} + \sup_{x \neq y} \frac{\left| f(x) - f(y) \right|}{\left| x - y \right|}
  \end{align*}
\end{definition}
Then the Lèvy-Prokhorov distance $d_{\mathrm{LP}}$ between the probability measures $\mu_1$ and $\mu_2$ is defined to be the following.
\begin{align*}
  d_{\mathrm{LP}}(\mu_1, \mu_2) \coloneqq \sup_{\norm{f}_{\mathrm{BL}} \leq 1} \int f (\dd \mu_1 - \dd \mu_2)
\end{align*}
Using the Lèvy-Prokhorov metric, we can define what it means for a probability measure to be $\varepsilon$-almost $T$-invariant.
\begin{definition}[$\varepsilon$-almost $T$-invariance]
  A measure $\mu$ is $\varepsilon$-almost $T$-invariant if $d_{\mathrm{LP}}(\mu, T \mu) \leq \varepsilon$. Here $T \mu$ is the pushforward of $\mu$ under $T$.
\end{definition}
We state the following easy fact about orbit measures without proof.
\begin{fact}
  An orbit measure of length $L$ is $\frac{2}{L}$-almost $T$-invariant.
\end{fact}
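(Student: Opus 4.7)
The plan is to unpack both sides of $d_{\mathrm{LP}}(\mu_{p,L}, T\mu_{p,L}) \le \tfrac{2}{L}$ by a direct computation, using the telescoping structure of the orbit measure.

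First, I would compute the pushforward explicitly. Since $T$ is a measurable map and $T_{\ast}\delta_x = \delta_{T(x)}$, by linearity
\begin{align*}
T \mu_{p, L} \;=\; \frac{1}{L} \sum_{i=0}^{L-1} \delta_{T^{i+1}(p)} \;=\; \frac{1}{L} \sum_{i=1}^{L} \delta_{T^i(p)}.
\end{align*}
Subtracting, the middle $L-1$ Dirac masses cancel and one is left with
\begin{align*}
\mu_{p,L} - T\mu_{p,L} \;=\; \frac{1}{L}\bigl(\delta_{p} - \delta_{T^{L}(p)}\bigr).
\end{align*}
This is the key observation: the difference of an orbit measure and its shift telescopes to a signed measure of total variation at most $\tfrac{2}{L}$, supported on just the endpoints of the orbit.

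Next I would plug this into the definition of $d_{\mathrm{LP}}$. For any continuous $f$ on $\eta$ with $\|f\|_{\mathrm{BL}} \le 1$, the bounded-Lipschitz norm in particular bounds $\|f\|_{\infty} \le 1$, so
\begin{align*}
\left| \int f \, (d\mu_{p,L} - d T\mu_{p,L}) \right|
\;=\; \frac{1}{L}\bigl| f(p) - f(T^{L}(p)) \bigr|
\;\le\; \frac{2}{L} \|f\|_{\infty}
\;\le\; \frac{2}{L}.
\end{align*}
Taking the supremum over all such $f$ yields $d_{\mathrm{LP}}(\mu_{p,L}, T\mu_{p,L}) \le \tfrac{2}{L}$, i.e.\ $\mu_{p,L}$ is $\tfrac{2}{L}$-almost $T$-invariant.

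There is essentially no obstacle here; the only subtlety is the edge case where $T^{L}(p)$ is undefined because the orbit hits a singularity before step $L$. This happens for only countably many starting points $p$, and in practice one restricts to $p$ whose orbit of length $L$ avoids the singular set, which is a full-measure condition with respect to the transverse measure. The computation above then goes through verbatim.
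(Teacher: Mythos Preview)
Your proof is correct and is the natural direct argument: telescope the difference to $\tfrac{1}{L}(\delta_p - \delta_{T^L(p)})$ and bound by $\|f\|_\infty \le 1$. The paper itself states this fact explicitly without proof (``We state the following easy fact about orbit measures without proof''), so there is nothing to compare against; your write-up simply fills in the omitted routine computation.
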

The following lemma shows that a long orbit measure is close to an invariant measure.
\begin{lemma}
  \label{lem:long-orbit-is-almost-invariant}
  Let $\left\{ n_j \right\}$ be a sequence of positive integers and let $\{\mu_{ij}\}$ be orbit measures such that $1 \leq i \leq n_j$ and $d(\mu_{ij}, T{\mu}_{ij}) \leq l_j$, where $\lim_{j \to \infty} l_j = 0$.
  For any $\varepsilon > 0$, there exists an $J$ large enough such that for all $j > J$, $\mu_{ij}$ is within distance $\varepsilon$ of an invariant measure.
\end{lemma}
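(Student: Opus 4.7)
The plan is to argue by contradiction using the weak-$\ast$ compactness of $\mathcal{P}(\eta)$, the space of Borel probability measures on the compact transverse arc $\eta$, metrized by $d_{\mathrm{LP}}$. Suppose the conclusion fails: then there exist $\varepsilon > 0$, a subsequence $j_k \to \infty$, and indices $1 \leq i_k \leq n_{j_k}$ such that the orbit measures $\nu_k \coloneqq \mu_{i_k j_k}$ satisfy both $d_{\mathrm{LP}}(\nu_k, T\nu_k) \leq l_{j_k} \to 0$ and $d_{\mathrm{LP}}(\nu_k, \mathcal{M}_{\mathrm{inv}}) \geq \varepsilon$, where $\mathcal{M}_{\mathrm{inv}}$ denotes the closed convex set of $T$-invariant Borel probability measures. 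Weak-$\ast$ sequential compactness lets one extract a further subsequence $\nu_{k_m} \to \nu_\infty$ in $(\mathcal{P}(\eta), d_{\mathrm{LP}})$; continuity of $d_{\mathrm{LP}}(\,\cdot\,, \mathcal{M}_{\mathrm{inv}})$ forces $d_{\mathrm{LP}}(\nu_\infty, \mathcal{M}_{\mathrm{inv}}) \geq \varepsilon$. A contradiction will arise once one shows that $\nu_\infty$ is itself $T$-invariant.

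The core computation is a Krylov--Bogolyubov step: for any bounded Lipschitz $f$ with $\|f\|_{\mathrm{BL}} \leq 1$, the almost-invariance hypothesis yields
\[
\left| \int f\,d\nu_{k_m} - \int f\circ T\,d\nu_{k_m} \right| = \left| \int f\,d(\nu_{k_m} - T\nu_{k_m}) \right| \leq l_{j_{k_m}} \longrightarrow 0.
\]
Weak convergence gives $\int f\,d\nu_{k_m} \to \int f\,d\nu_\infty$, and combining the two facts forces $\int f\circ T\,d\nu_{k_m} \to \int f\,d\nu_\infty$. If one can further show $\int f\circ T\,d\nu_{k_m} \to \int f\circ T\,d\nu_\infty$, then $\int f\,d\nu_\infty = \int f\circ T\,d\nu_\infty$ for all Lipschitz $f$; density in $C(\eta)$ delivers $T\nu_\infty = \nu_\infty$ and the desired contradiction.

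The main obstacle is this last convergence, because the first return map $T$ is only piecewise continuous: its finite discontinuity set $D \subset \eta$, consisting of preimages of the singular leaves, is exactly where $f \circ T$ may fail to be continuous. By the portmanteau theorem applied to bounded measurable functions with $\nu_\infty$-null discontinuity set, it suffices to verify that $\nu_\infty(D) = 0$. The explicit structure of orbit measures handles this: each $\nu_{k_m}$ is supported on a generic $T$-orbit disjoint from $D$, with every atom of mass exactly $1/L_{k_m}$. Splitting into cases on the orbit lengths, if $L_{k_m} \to \infty$ then arbitrarily small neighborhoods of $D$ can be chosen so that $\nu_{k_m}$ puts vanishing mass on them, which transfers in the limit to $\nu_\infty(D) = 0$. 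If instead $L_{k_m}$ stays bounded, one passes to a subsubsequence with constant length $L_0$; the almost-invariance forces $d(T^{L_0}(p_{k_m}), p_{k_m}) \to 0$, so the $L_0$ limiting atom positions form a genuine periodic $T$-orbit and $\nu_\infty$ is already the uniform measure on it, hence in $\mathcal{M}_{\mathrm{inv}}$. Either case contradicts $d_{\mathrm{LP}}(\nu_\infty, \mathcal{M}_{\mathrm{inv}}) \geq \varepsilon$, completing the proof.
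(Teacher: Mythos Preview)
Your approach---contradiction via weak-$*$ sequential compactness of the probability measures on $\eta$---is exactly the paper's, though the paper packages it far more tersely: it sets $A_k$ to be the closure of $\{\mu_{ij}: j \geq k\}$, asserts in one line that $\bigcap_k A_k$ is contained in the set of invariant measures, and then uses the elementary fact that a nested sequence of compacta whose intersection lies in a closed set must eventually lie in any $\varepsilon$-neighbourhood of that set. In particular, the paper does not address the discontinuities of $T$ at all; your Krylov--Bogolyubov discussion and the case split on $L_{k_m}$ are attempting to fill in a step the paper treats as obvious.

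That said, your justification in the case $L_{k_m}\to\infty$ has a gap. The claim that ``arbitrarily small neighborhoods of $D$ can be chosen so that $\nu_{k_m}$ puts vanishing mass on them'' does not follow from the atoms each having mass $1/L_{k_m}$: nothing prevents a growing number of orbit points from accumulating near a point of $D$, so $\nu_\infty(D)=0$ is not established by what you wrote. A cleaner finish bypasses this issue entirely. For any subinterval $J\subset\eta$ lying inside a single continuity interval of $T$ (so that $T|_J$ is an isometry onto $T(J)$), one checks directly from the definition of an orbit measure that $|\nu_{k_m}(J)-\nu_{k_m}(T(J))|\leq 2/L_{k_m}$. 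Choosing such $J$ with $\nu_\infty(\partial J)=\nu_\infty(\partial(T(J)))=0$, which is possible for all but countably many endpoints, the portmanteau theorem gives $\nu_\infty(J)=\nu_\infty(T(J))$; since these intervals generate the Borel $\sigma$-algebra, $\nu_\infty$ is $T$-invariant and the contradiction follows without any control on $\nu_\infty(D)$.
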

\begin{proof}
  Let $A_k$ be the closure of all the $\mu_{ij}$ such that $j \geq k$.
  The set $A_k$ is compact, because it is a closed subset of a compact set, and we have that $\bigcap_{k=1}^{\infty} A_k$ is contained in the set of invariant measures.
  By compactness, we have that for some large enough $J$, $A_J$ must be in a $\varepsilon$-neighbourhood of the set of invariant measures, and therefore every $\mu_{ij}$ for $j > J$ must distance at most $\varepsilon$ away from an invariant measure.
\end{proof}
We now sketch a proof of the following lemma about simplices in finite dimensional normed spaces.
\begin{lemma}
  \label{lem:finite-normed}
  Let $V$ be a finite dimensional normed vector space, and $S$ be a simplex in $V$.
  Let $\{p_1, \ldots, p_n\}$ be points in $S$ such that they are all at least distance $\varepsilon$ from a vertex $v$.
  Then there exists a positive constant $k$ such that any convex combination of $\{p_i\}$ is distance at least $\dfrac{\varepsilon}{k}$ from $v$.
\end{lemma}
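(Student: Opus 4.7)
I would argue via barycentric coordinates on the simplex. Write the vertices of $S$ as $v = v_0, v_1, \ldots, v_d$, and let $F$ denote the face opposite to $v$, that is, the convex hull of $v_1, \ldots, v_d$. The face $F$ is compact and, by affine independence of the vertices, does not contain $v$, so there exist constants $0 < m \leq M$ with $m \leq \|q - v\| \leq M$ for every $q \in F$.

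For each $p_i$, expand in barycentric coordinates as $p_i = \alpha_i v + (1-\alpha_i) q_i$ with $\alpha_i \in [0,1]$ and $q_i \in F$; this is simply the decomposition according to the coefficient of $v$. From $p_i - v = (1-\alpha_i)(q_i - v)$ and the upper bound on $\|q_i - v\|$, the hypothesis $\|p_i - v\| \geq \varepsilon$ forces $1 - \alpha_i \geq \varepsilon/M$. This is the first key step: distance from $v$ controls the barycentric weight on the opposite face from below.

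Now consider an arbitrary convex combination $p = \sum_{i=1}^{n} t_i p_i$ with $t_i \geq 0$ and $\sum t_i = 1$. Substituting the decomposition gives
\begin{equation*}
p - v = \sum_{i=1}^{n} t_i(1-\alpha_i)(q_i - v).
\end{equation*}
Setting $s_i = t_i(1-\alpha_i)$ and $T = \sum_i s_i$, we have $T \geq \varepsilon/M$ by the previous step, and $\bar{q} := T^{-1}\sum_i s_i q_i$ is a convex combination of points of $F$, hence lies in $F$. Therefore $p - v = T(\bar{q} - v)$ and
\begin{equation*}
\|p - v\| = T \|\bar{q} - v\| \geq \frac{\varepsilon}{M} \cdot m = \frac{\varepsilon}{M/m}.
\end{equation*}
Taking $k = M/m$ finishes the proof. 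There is no real obstacle here beyond bookkeeping: the argument is a direct consequence of the fact that on a compact face not containing $v$, the distance to $v$ is pinched between two positive constants, so weights on the face translate linearly into distance bounds in an arbitrary norm.
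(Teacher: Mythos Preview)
Your argument is correct. The paper's own proof is only a sketch: it translates $v$ to the origin, asserts that it suffices to replace the $p_i$ by the points on the edges from $v$ at distance exactly $\varepsilon$, and then invokes compactness of the convex hull of those edge points (which avoids $0$) to extract a positive minimum $\varepsilon'$, setting $k$ from the ratio. Your route is genuinely different: instead of a reduction-plus-compactness step, you work directly with the barycentric decomposition $p_i = \alpha_i v + (1-\alpha_i)q_i$ and the two-sided pinch $m \le \|q-v\| \le M$ on the opposite face. This buys you an explicit constant $k = M/m$ depending only on the simplex and the chosen vertex, and in particular makes it transparent that $k$ is independent of both $n$ and $\varepsilon$, which is exactly what the application in the next lemma needs. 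The paper's sketch reaches the same conclusion, but the ``it will also suffice'' reduction is left to the reader; your proof simply sidesteps that step.
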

\begin{proof}[Sketch of proof]
  We shift the simplex so that the vertex $v$ is at the origin.
  It will also suffice to let $\{p_1, \ldots, p_n\}$ be the vectors joining $0$ to the other vertices scaled to have norm $\varepsilon$.
  The convex combinations of $\{p_1, \ldots, p_n\}$ will form a compact set not containing $0$.
  Since the norm is a continuous function, the norm will achieve a minimum $\varepsilon^{\prime}$ on that compact set, and the minimum will not be $0$.
  Then $k = \dfrac{\varepsilon^{\prime}}{\varepsilon}$ is the required value of $k$.
\end{proof}

We now prove a lemma that gives us a criterion for deducing when a long orbit measure is close to an ergodic measure.
\begin{lemma}
  \label{lem:convexity-argument}
  Let $\{n_i\}$ be a sequence of positive integers, and let $\{p_{ij}\}$ and $\{L_{ij}\}$ be points in $\eta$ and positive integers respectively, where $1 \leq j \leq n_i$ and $\min_j L_{ij}$ goes to $\infty$ as $i$ goes to $\infty$.
  Consider the following sequence of probability measures, indexed by $i$.
  \begin{align*}
    \mu_i \coloneqq \frac{\sum_{j=1}^{n_i} L_{ij} \cdot \mu_{p_{ij}, L_{ij}}}{\sum_{j=1}^{n_i} L_{ij}}
  \end{align*}
  If the sequence $\{\mu_i\}$ converges to an ergodic measure $\nu$, then there exists a subsequence of the orbit measures $\mu_{p_{ij}, L_{ij}}$ also converging to $\nu$.
\end{lemma}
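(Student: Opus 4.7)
The plan is a proof by contradiction. Suppose, contrary to the conclusion, that no subsequence of $\{\mu_{p_{ij},L_{ij}}\}$ converges to $\nu$. Then there exists $\delta>0$ and an index $I_0$ such that for every $i\geq I_0$ and every $j\in\{1,\ldots,n_i\}$,
\[
d_{\mathrm{LP}}\bigl(\mu_{p_{ij},L_{ij}},\,\nu\bigr)\geq\delta.
\]
My strategy is to combine this uniform separation with the near-invariance of long orbit measures and the rigidity of convex combinations in a simplex to derive a contradiction with the hypothesis $\mu_i\to\nu$.

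First, because $\nu$ is ergodic, it is a vertex of the Katok–Veech simplex $S$ of $T$-invariant probability measures on $\eta$. Let $k$ be the positive constant provided by Lemma~\ref{lem:finite-normed} applied to $S$ and the vertex $\nu$. Next, since $\min_j L_{ij}\to\infty$, every orbit measure $\mu_{p_{ij},L_{ij}}$ is eventually $\tfrac{2}{L_{ij}}$-almost $T$-invariant with uniformly small defect, so Lemma~\ref{lem:long-orbit-is-almost-invariant} supplies, for any $\eta>0$ and all sufficiently large $i$, an invariant measure $\tilde{\mu}_{ij}\in S$ with $d_{\mathrm{LP}}(\mu_{p_{ij},L_{ij}},\tilde{\mu}_{ij})<\eta$. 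I choose $\eta=\delta/(4k)$; then by the triangle inequality
\[
d_{\mathrm{LP}}(\tilde{\mu}_{ij},\nu)\;\geq\;\delta-\tfrac{\delta}{4k}\;\geq\;\tfrac{3\delta}{4}
\]
for every $j$ and all large $i$.

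Now I form the auxiliary convex combination
\[
\tilde{\mu}_i \;\coloneqq\; \frac{\sum_{j=1}^{n_i} L_{ij}\,\tilde{\mu}_{ij}}{\sum_{j=1}^{n_i} L_{ij}},
\]
which lies in $S$ because $S$ is convex. Applying Lemma~\ref{lem:finite-normed} to the points $\{\tilde{\mu}_{ij}\}_j$, which are all at distance at least $\tfrac{3\delta}{4}$ from the vertex $\nu$, yields
\[
d_{\mathrm{LP}}(\tilde{\mu}_i,\nu)\;\geq\;\tfrac{3\delta}{4k}.
\]
On the other hand, since $d_{\mathrm{LP}}$ arises from a norm-like duality with $\lVert\cdot\rVert_{\mathrm{BL}}$, it is subadditive under convex combinations, giving
\[
d_{\mathrm{LP}}(\mu_i,\tilde{\mu}_i)\;\leq\;\frac{\sum_j L_{ij}\,d_{\mathrm{LP}}(\mu_{p_{ij},L_{ij}},\tilde{\mu}_{ij})}{\sum_j L_{ij}}\;\leq\;\tfrac{\delta}{4k}.
\]
Combining these bounds, $d_{\mathrm{LP}}(\mu_i,\nu)\geq \tfrac{3\delta}{4k}-\tfrac{\delta}{4k}=\tfrac{\delta}{2k}>0$ for all sufficiently large $i$, contradicting $\mu_i\to\nu$.

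The only subtle point is the bookkeeping of constants: one must pick the invariant-approximation scale $\eta$ small enough relative to the simplex constant $k$ from Lemma~\ref{lem:finite-normed} so that the near-invariance error does not swallow the lower bound produced by the convexity argument. Once this is arranged as above, ergodicity of $\nu$ (which forces $\nu$ to be a vertex of $S$) is exactly what lets Lemma~\ref{lem:finite-normed} apply, and the contradiction follows.
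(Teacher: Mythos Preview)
Your argument is correct and follows essentially the same route as the paper: assume a uniform gap $\delta$ between all orbit measures and $\nu$, use Lemma~\ref{lem:long-orbit-is-almost-invariant} to replace each orbit measure by a nearby invariant measure in the simplex, then invoke Lemma~\ref{lem:finite-normed} (with $\nu$ as a vertex, by ergodicity) to bound the convex combination away from $\nu$, contradicting $\mu_i\to\nu$. The only differences are cosmetic---you fix the simplex constant $k$ first and then choose the approximation scale $\eta=\delta/(4k)$, whereas the paper writes the error decomposition $\mu_{p_{ij},L_{ij}}=\iota_{ij}+e_{ij}$ and picks the scale at the end; your bookkeeping tacitly uses $k\geq 1$, which holds since the points themselves are among their own convex combinations.
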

\begin{proof}
  Suppose for the sake of a contradiction that no subsequence of $\mu_{p_{ij}, L_{ij}}$ converged to $\nu$.
  That would mean there exists a small enough $\varepsilon > 0$ and a large enough $i_0$ such that for all $i > i_0$, the measures $\mu_{p_{ij}, L_{ij}}$ are more than distance $\varepsilon$ from $\nu$.
  Since $\min_{j} L_{ij}$ goes to $\infty$, there exists some other large enough $i_1 > i_0$ such that for all $i > i_1$, $\mu_{p_{ij}, L_{ij}}$ is within distance $\frac{\varepsilon}{k}$ of the simplex of invariant probability measures, where $k$ is a large integer we will pick later: this is a consequence of \autoref{lem:long-orbit-is-almost-invariant}.
  Using this, we decompose $\mu_{p_{ij}, L_{ij}}$ as the sum of an invariant measure $\iota_{ij}$ and a signed measure $e_{ij}$, such that $d_{\mathrm{LP}}(0, e_{ij}) \leq \frac{\varepsilon}{k}$.
  \begin{align*}
    \mu_{p_{ij}, L_{ij}} = \iota_{ij} + e_{ij}
  \end{align*}
  Observe that the weighted average of $\mu_{p_{ij}, L_{ij}}$ will differ from the weighted average of $\iota_{ij}$ by at most $\frac{\varepsilon}{k}$.
  Also note that all the invariant measures $\iota_{ij}$ are distance at least $\varepsilon - \frac{\varepsilon}{k}$ from $\nu$.
  Since $\nu$ is the vertex of a finite-dimensional convex set, we know from \autoref{lem:finite-normed} that any weighted average of the $\iota_{ij}$ must be at least distance $\frac{\varepsilon - \frac{\varepsilon}{k}}{k^{\prime}}$ from $\nu$, where the multiplicative factor $k^{\prime}$ only depends on the geometry of the convex set of invariant probability measures, and not $\varepsilon$ or $k$.
  By picking $k > 2k^{\prime}$ we can ensure that any weighted average of the $\mu_{p_{ij}, L_{ij}}$ must be at least distance $\frac{\varepsilon}{2k^{\prime}}$ from $\nu$.
  But this would contradict our hypothesis that the measures $\mu_i$ converge to $\nu$.
  That would that there exists some subsequence of $\mu_{p_{ij}, L_{ij}}$ that converges to $\nu$, which proves the lemma.
\end{proof}

We now have everything we need to prove \autoref{thm:rational-approximation}.
\begin{proof}[Proof of \autoref{thm:rational-approximation}]
  If a minimal component $\lambda_j$ is periodic, then the proof is straightforward.
  Since $\lambda$ contains no one-sided component, the core curve of $\lambda_j$ must be two-sided, possibly with an irrational coefficient.
  Approximating the core curve with rational coefficients proves the result in case (i).

  In case (ii), we have that $\lambda_j$ is not periodic, but an ergodic orientable foliation.
  Pick an arc $\eta_0$ transverse to $\lambda_j$ such that the leaf passing through the left endpoint $p_0$ of $\eta_0$ equidistributes with respect to the ergodic transverse measure of $\lambda_j$.
  We can find such a leaf because almost every leaf equidistributes with respect to the ergodic measure.
  We now inductively define a sequence of points $\{p_i\}$, sequence of sub-intervals $\eta_i$, and a sequence of segments $\{a_i\}$ of the leaf passing through $p_0$.
  Let $p_1$ be the first return of the leaf going up through $p_0$ to the interval $\eta_0$.
  Define the sub-interval $\eta_1$ to be the sub-interval whose left endpoint is $p_0$ and right endpoint is $p_1$.
  Let $a_1$ be the segment of the leaf starting at $p_0$ and ending at $p_1$.
  Given a point $p_i$, define $p_{i+1}$ to be the first return to the interval $\eta_{i}$, $\eta_{i+1}$ to be the interval whose left endpoint is $p_0$ and right endpoint is $p_{i+1}$, and $a_{i+1}$ to be the segment of the leaf starting at $p_i$ and ending at $p_{i+1}$.
  % Let $A_{i}$ be the concatenation of the leaf segments $\{a_1, \ldots, a_i\}$.

  Since we have assumed $\lambda_j$ is an orientable foliation, we have that the leaf we are working with always enters $\eta_0$ from the bottom, and exits from the top.
  If we pick $\eta_0$ to be small enough, we can pick a local orientation, and keep track of how a positively oriented frame returns to each $p_i$, i.e. with or without the orientation flipped (see \autoref{fig:two-possibilities}).
  \begin{figure}[h]
    \centering
    \def\svgscale{0.45}
    \import{./images/}{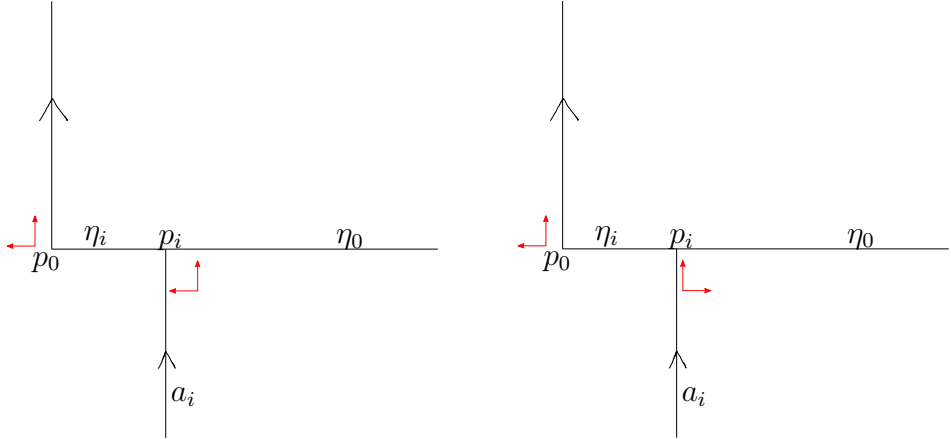}

    \caption{Two possibilities for first return to $\eta_i$: on the left, the arc returns without the local orientation flipping, and on the right, the arc returns with the local orientation flipped.}
    \label{fig:two-possibilities}
  \end{figure}
  If the flow returns infinitely often without the orientation flipped, we join the endpoint $p_i$ to $p_0$ by going left along $\eta_i$ to get a simple closed curve that is two-sided.
  Furthermore, the geodesic tightening of the resulting curve is very close to the original curve, because the initial and final tangent vectors can be made arbitrarily close since they both face the ``up'' direction: the Anosov closing lemma then tells us that an orbit of the geodesic flow that approximately closes up can be perturbed by a small amount to exactly close up.
  This gives us a long geodesic that equidistributes with respect to the ergodic measure, and therefore an approximation by two-sided curves.

  If the flow does not return without the orientation flipped infinitely often, it must always return with the orientation flipped after some large enough $i_0$.
  In that case, consider the simple two-sided curves $c_i$ obtained by concatenating $a_i$ with the arc on $\eta_{i-1}$ joining $p_{i-1}$ and $p_i$ (see \autoref{fig:comes-with-flip}).
  \begin{figure}[h]
    \centering
    \def\svgscale{0.75}
    %% Creator: Inkscape inkscape 0.92.5, www.inkscape.org
%% PDF/EPS/PS + LaTeX output extension by Johan Engelen, 2010
%% Accompanies image file 'comes-with-flip.pdf' (pdf, eps, ps)
%%
%% To include the image in your LaTeX document, write
%%   \input{<filename>.pdf_tex}
%%  instead of
%%   \includegraphics{<filename>.pdf}
%% To scale the image, write
%%   \def\svgwidth{<desired width>}
%%   \input{<filename>.pdf_tex}
%%  instead of
%%   \includegraphics[width=<desired width>]{<filename>.pdf}
%%
%% Images with a different path to the parent latex file can
%% be accessed with the `import' package (which may need to be
%% installed) using
%%   \usepackage{import}
%% in the preamble, and then including the image with
%%   \import{<path to file>}{<filename>.pdf_tex}
%% Alternatively, one can specify
%%   \graphicspath{{<path to file>/}}
%% 
%% For more information, please see info/svg-inkscape on CTAN:
%%   http://tug.ctan.org/tex-archive/info/svg-inkscape
%%
\begingroup%
  \makeatletter%
  \providecommand\color[2][]{%
    \errmessage{(Inkscape) Color is used for the text in Inkscape, but the package 'color.sty' is not loaded}%
    \renewcommand\color[2][]{}%
  }%
  \providecommand\transparent[1]{%
    \errmessage{(Inkscape) Transparency is used (non-zero) for the text in Inkscape, but the package 'transparent.sty' is not loaded}%
    \renewcommand\transparent[1]{}%
  }%
  \providecommand\rotatebox[2]{#2}%
  \newcommand*\fsize{\dimexpr\f@size pt\relax}%
  \newcommand*\lineheight[1]{\fontsize{\fsize}{#1\fsize}\selectfont}%
  \ifx\svgwidth\undefined%
    \setlength{\unitlength}{360.79819961bp}%
    \ifx\svgscale\undefined%
      \relax%
    \else%
      \setlength{\unitlength}{\unitlength * \real{\svgscale}}%
    \fi%
  \else%
    \setlength{\unitlength}{\svgwidth}%
  \fi%
  \global\let\svgwidth\undefined%
  \global\let\svgscale\undefined%
  \makeatother%
  \begin{picture}(1,1.01421611)%
    \lineheight{1}%
    \setlength\tabcolsep{0pt}%
    \put(0,0){\includegraphics[width=\unitlength,page=1]{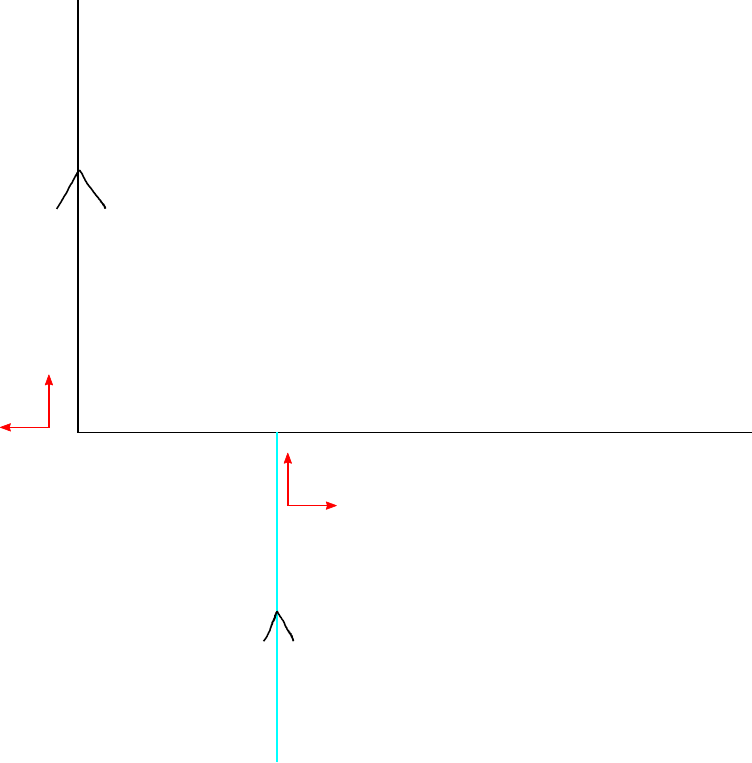}}%
    \put(0.76433602,0.45324598){\color[rgb]{0,0,0}\makebox(0,0)[lt]{\lineheight{1.25}\smash{\begin{tabular}[t]{l}$\eta_0$\end{tabular}}}}%
    \put(0.17862833,0.4591413){\color[rgb]{0,0,0}\makebox(0,0)[lt]{\lineheight{1.25}\smash{\begin{tabular}[t]{l}$\eta_i$\end{tabular}}}}%
    \put(0.35188583,0.45132822){\color[rgb]{0,0,0}\makebox(0,0)[lt]{\lineheight{1.25}\smash{\begin{tabular}[t]{l}$p_i$\end{tabular}}}}%
    \put(0.05913029,0.40041212){\color[rgb]{0,0,0}\makebox(0,0)[lt]{\lineheight{1.25}\smash{\begin{tabular}[t]{l}$p_0$\end{tabular}}}}%
    \put(0.38023604,0.0856069){\color[rgb]{0,0,0}\makebox(0,0)[lt]{\lineheight{1.25}\smash{\begin{tabular}[t]{l}$a_i$\end{tabular}}}}%
    \put(0,0){\includegraphics[width=\unitlength,page=2]{comes-with-flip.pdf}}%
    \put(0.56470636,0.39584689){\color[rgb]{0,0,0}\makebox(0,0)[lt]{\lineheight{1.25}\smash{\begin{tabular}[t]{l}$p_{i-1}$\end{tabular}}}}%
    \put(0,0){\includegraphics[width=\unitlength,page=3]{comes-with-flip.pdf}}%
  \end{picture}%
\endgroup%

    \caption{The curve $c_i$ is colored blue. Since the leaf from $p_0$ returns with the local orientation flipped to both $p_{i-1}$ and $p_{i}$, the curve $c_i$ is two-sided.}
    \label{fig:comes-with-flip}
  \end{figure}
  We have that as $i$ goes to $\infty$, the length of $c_i$ must go to $\infty$ as well, otherwise a subsequence would converge to a closed vertical curve starting at $p_0$, which cannot happen since the leaf through $p_0$ equidistributes.
  Also, note that the average of the curves $c_i$ weighted by their lengths for $i^{\prime} < i < i^{\prime \prime}$ where $i^{\prime \prime} \gg i^{\prime}$ is close to the ergodic measure, since we assumed that the leaf through $p_0$ equidistributes.
  This lets us invoke \autoref{lem:convexity-argument} to claim that there is a subsequence of $c_i$ whose orbit measures converge to the ergodic measure.
  Consequently, the geodesic representatives of $c_i$ converge to $\lambda_j$, since the geodesic tightening is close to the original curve, by the virtue of the initial and final tangent vectors being arbitrarily close.
  This resolves the two cases that can appear in the case of an orientable foliation, proving the result for case (ii).

  For case (iii), we define the points $p_i$, the nested intervals $\eta_i$, and the arcs $a_i$ in a similar manner as to case (ii).
  The key difference is that we no longer have that the foliation is orientable, which means the leaf can approach $p_i$ in one of four possible ways: from the ``up'' or the ``down'' direction, and with or without the orientation flipped.

  In case that the leaf approaches $p_i$ from the ``down'' direction without the orientation flipped infinitely often, the same closing argument as case (ii) works.
  Suppose now that the leaf approaches $p_i$ from the ``up'' direction, but without the orientation flipping, infinitely often.
  We then construct simple two-sided curves by concatenating the flow with the arc joining $p_i$ to $p_0$.
  % \begin{figure}[h]
  %   \centering
  %   \includegraphics{example-image-b}
  %   \caption{Constructing the curve $c_i$ when returning from the ``up'' direction.}
  %   \label{fig:coming-from-above}
  % \end{figure}
  While this curve does equidistribute with respect to the ergodic measure, it is not necessary that its geodesic tightening will do so.
  Denote the geodesic tightening by $c_i^{\prime}$: we have that its intersection number with $\lambda_j$ goes to $0$ as $i$ goes to $\infty$.
  By the compactness of the space of transverse probability measures, we must have that $\mu_{c_i^{\prime}}$ converges to some projective measured foliation $\gamma$ which has $0$ intersection number with $\lambda_j$, but is still supported on a subset of the support of $\lambda_j$.
  This means $\gamma$ must be another projective measured foliation in the topological conjugacy class of $\lambda_j$, i.e. is supported on the same underlying foliation.
  This proves the furthermore case of theorem.
  If $\lambda_j$ is actually uniquely ergodic, there is only one measure in the simplex of invariant probability measures, namely the uniquely ergodic one, and therefore $\mu_{g_i}$ is forced to converge to it.

  Suppose now that neither of the first two scenarios occur, i.e. the leaf returns to $p_i$ from the ``up'' or ``down'' direction, but with the orientation always flipped.
  We deal with this case like we did with the second subcase of case (ii).
  See \autoref{fig:case3-surgery} for the construction of the two-sided curves $c_i$.
  \begin{figure}[h]
    \centering
    \def\svgscale{0.75}
    %% Creator: Inkscape inkscape 0.92.5, www.inkscape.org
%% PDF/EPS/PS + LaTeX output extension by Johan Engelen, 2010
%% Accompanies image file 'case-3-surgery.pdf' (pdf, eps, ps)
%%
%% To include the image in your LaTeX document, write
%%   \input{<filename>.pdf_tex}
%%  instead of
%%   \includegraphics{<filename>.pdf}
%% To scale the image, write
%%   \def\svgwidth{<desired width>}
%%   \input{<filename>.pdf_tex}
%%  instead of
%%   \includegraphics[width=<desired width>]{<filename>.pdf}
%%
%% Images with a different path to the parent latex file can
%% be accessed with the `import' package (which may need to be
%% installed) using
%%   \usepackage{import}
%% in the preamble, and then including the image with
%%   \import{<path to file>}{<filename>.pdf_tex}
%% Alternatively, one can specify
%%   \graphicspath{{<path to file>/}}
%% 
%% For more information, please see info/svg-inkscape on CTAN:
%%   http://tug.ctan.org/tex-archive/info/svg-inkscape
%%
\begingroup%
  \makeatletter%
  \providecommand\color[2][]{%
    \errmessage{(Inkscape) Color is used for the text in Inkscape, but the package 'color.sty' is not loaded}%
    \renewcommand\color[2][]{}%
  }%
  \providecommand\transparent[1]{%
    \errmessage{(Inkscape) Transparency is used (non-zero) for the text in Inkscape, but the package 'transparent.sty' is not loaded}%
    \renewcommand\transparent[1]{}%
  }%
  \providecommand\rotatebox[2]{#2}%
  \newcommand*\fsize{\dimexpr\f@size pt\relax}%
  \newcommand*\lineheight[1]{\fontsize{\fsize}{#1\fsize}\selectfont}%
  \ifx\svgwidth\undefined%
    \setlength{\unitlength}{360.79819961bp}%
    \ifx\svgscale\undefined%
      \relax%
    \else%
      \setlength{\unitlength}{\unitlength * \real{\svgscale}}%
    \fi%
  \else%
    \setlength{\unitlength}{\svgwidth}%
  \fi%
  \global\let\svgwidth\undefined%
  \global\let\svgscale\undefined%
  \makeatother%
  \begin{picture}(1,0.91978593)%
    \lineheight{1}%
    \setlength\tabcolsep{0pt}%
    \put(0,0){\includegraphics[width=\unitlength,page=1]{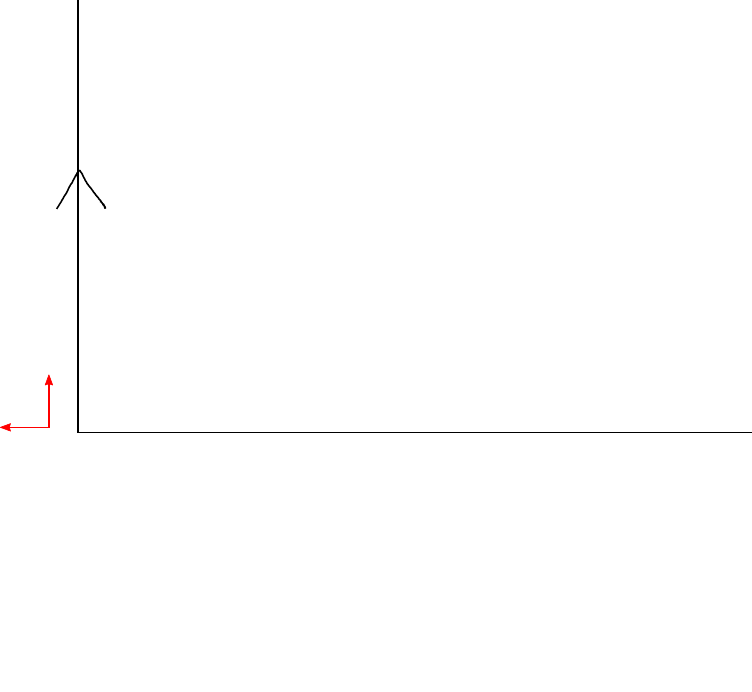}}%
    \put(0.76433602,0.3588158){\color[rgb]{0,0,0}\makebox(0,0)[lt]{\lineheight{1.25}\smash{\begin{tabular}[t]{l}$\eta_0$\end{tabular}}}}%
    \put(0.17862833,0.36471112){\color[rgb]{0,0,0}\makebox(0,0)[lt]{\lineheight{1.25}\smash{\begin{tabular}[t]{l}$\eta_i$\end{tabular}}}}%
    \put(0.05913029,0.30598194){\color[rgb]{0,0,0}\makebox(0,0)[lt]{\lineheight{1.25}\smash{\begin{tabular}[t]{l}$p_0$\end{tabular}}}}%
    \put(0,0){\includegraphics[width=\unitlength,page=2]{case-3-surgery.pdf}}%
  \end{picture}%
\endgroup%

    \caption{Construction of the blue curve $c_i$ when the leaf always returns with orientation flipped from the ``up'' or ``down'' direction.}
    \label{fig:case3-surgery}
  \end{figure}
  We have that the geodesic tightenings of the curves $c_i$ are close to the original curve by the Anosov closing lemma, and that the weighted averages of the $c_i$ converge the ergodic measure, which means by \autoref{lem:convexity-argument} we have a subsequence $\mu_{c_i}$ that converges to the ergodic measure.
  This proves the result for case (iii), and therefore the theorem.
\end{proof}

\section{Upper bound for the limit set}
\label{sec:upper-bound-limit-set}

In this section, we prove that $\geolim(\mcg(\no_g))$ is contained in $\pmf^+(\no_g)$.
We do so by defining an $\mcg(\no_g)$-invariant subset $\systole(\no_g)$, and showing that the intersection of its closure with $\pmf(\no_g)$ is contained in $\pmf^+(\no_g)$.

\begin{definition}[One-sided systole superlevel set]
  For any $\varepsilon > 0$, the set $\systole(\no_g)$ is the set of all points in $\teich(\no_g)$ where the length of the shortest one-sided curve is greater than or equal to $\varepsilon$.
\end{definition}

We can state the main theorem of this section.
\begin{theorem}
  \label{thm:systole-closure}
  For any $\varepsilon > 0$, $\overline{\systole(\no_g)} \cap \pmf(\no_g)$ is contained in $\pmf^+(\no_g)$.
\end{theorem}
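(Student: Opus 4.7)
The plan is to argue by contradiction through the orientation double cover. Suppose $\lambda \in \pmf^-(\no_g)$ is the Thurston-boundary limit of some sequence $X_n \in \systole(\no_g)$. Unpacking $\pmf^-$, the foliation $\lambda$ contains a one-sided closed leaf $\gamma$, which by the discussion in Section \ref{sec:backgr-meas-foli} is the core of a M\"obius-strip minimal component carrying positive weight. Lifting via Theorem \ref{thm:embedding-teich}, I obtain $\iota^{\ast}$-invariant points $\widetilde{X}_n \in \teich(\os_{g-1})$ converging to an $\iota^{\ast}$-invariant limit $\widetilde{\lambda} \in \pmf(\os_{g-1})$; the preimage $\widetilde{\gamma}$ is a single connected $\iota$-invariant simple closed curve with $\ell_{\widetilde{X}_n}(\widetilde{\gamma}) = 2 \ell_{X_n}(\gamma) \geq 2\varepsilon$, and the M\"obius component of $\lambda$ lifts to a cylinder component of $\widetilde{\lambda}$ with core $\widetilde{\gamma}$ of positive weight.

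I would next exploit the $\iota^{\ast}$-invariance to kill the Fenchel--Nielsen twist parameter about $\widetilde{\gamma}$. Pick a pants decomposition of $\os_{g-1}$ containing $\widetilde{\gamma}$. Since $\iota$ is globally orientation-reversing but fixes $\widetilde{\gamma}$ setwise by a free involution (reflecting the one-sidedness of $\gamma$), conjugation by $\iota$ sends the Dehn twist $T_{\widetilde{\gamma}}$ to its inverse. Consequently $\iota^{\ast}$ preserves $\ell_{\widetilde{\gamma}}$ but negates the twist coordinate $\tau_{\widetilde{\gamma}}$, and the fixed-point condition $\iota^{\ast} \widetilde{X}_n = \widetilde{X}_n$ forces $\tau_{\widetilde{\gamma}}(\widetilde{X}_n) = 0$ for every $n$. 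Combined with the systole bound, the pair $(\ell_{\widetilde{\gamma}}, \tau_{\widetilde{\gamma}})$ stays uniformly bounded along the entire sequence.

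The main technical step, and where I expect the principal work, is to deduce from this boundedness that $\widetilde{\gamma}$ cannot appear as a positive-weight component of $\widetilde{\lambda}$, contradicting the conclusion of the first paragraph. The intuition is that a simple closed curve enters the Thurston limit of a Teichm\"uller sequence with positive weight only through pinching (driving $\ell \to 0$) or Dehn twisting (driving $|\tau| \to \infty$), and both are ruled out here. The cleanest rigorous route uses Minsky's product region theorem, which quasi-isometrically identifies the $\{\ell_{\widetilde{\gamma}} < \varepsilon_0\}$ part of $\teich(\os_{g-1})$ with a product of an upper half plane in the coordinates $(\ell_{\widetilde{\gamma}}, \tau_{\widetilde{\gamma}})$ and a Teichm\"uller space of the complementary surface, and shows that the $[\widetilde{\gamma}]$-direction in Thurston boundary is reached precisely by escaping to the boundary of this half plane. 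A direct substitute uses the Fenchel--Nielsen length estimate $\ell_{\widetilde{X}}(\alpha) \asymp i(\alpha, \widetilde{\gamma}) \bigl( |\tau_{\widetilde{\gamma}}| + 1/\ell_{\widetilde{\gamma}} \bigr) + (\text{complementary terms})$; normalizing by $t_n$ so that $t_n \ell_{\widetilde{X}_n}(\alpha) \to i(\widetilde{\lambda}, \alpha)$, the $\widetilde{\gamma}$-contribution to $i(\widetilde{\lambda}, \alpha)$ vanishes whenever $\ell_{\widetilde{\gamma}}$ is bounded below and $\tau_{\widetilde{\gamma}}$ is bounded, yielding the desired contradiction.
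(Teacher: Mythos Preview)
Your strategy---pass to the orientation double cover, use $\iota^\ast$-invariance to pin down the twist about $\widetilde{\gamma}$, and then argue that a curve with length bounded below and bounded twist cannot carry positive weight in a Thurston limit---is a reasonable outline and genuinely different from the paper's route, but the last step is exactly where the content lies, and neither tool you invoke actually delivers it.

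Minsky's product-region theorem concerns the Teichm\"uller metric on the \emph{thin} part $\{\ell_{\widetilde{\gamma}}<\varepsilon_0\}$; your sequence sits in the \emph{thick} part for $\widetilde{\gamma}$, and in any case the Thurston compactification is not the visual boundary for the Teichm\"uller metric, so the theorem says nothing directly about which projective foliations arise as limits of $\widetilde{X}_n$. The Fenchel--Nielsen asymptotic you write, $\ell_{\widetilde X}(\alpha)\asymp i(\alpha,\widetilde\gamma)\bigl(|\tau_{\widetilde\gamma}|+1/\ell_{\widetilde\gamma}\bigr)+(\text{complementary terms})$, buries the difficulty in the complementary terms: these are arc-lengths inside the adjacent pairs of pants, governed by the \emph{other} cuff lengths, which may well diverge along the sequence. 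Showing that these divergent contributions cannot conspire to put weight on $\widetilde\gamma$ is precisely the hyperbolic-trigonometric estimate the paper proves as Proposition~\ref{prop:pinching} (the comparison of $p_1,p_3,p_4$ via the pentagon identity and the case analysis on $i(p_1,\lambda)$). The Remark following that proposition explicitly notes that the orientable version---which is what your double-cover reduction requires---is \emph{more} delicate, because one must now also track a twist parameter and handle two adjacent pairs of pants rather than one.

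A smaller issue: to conclude $\tau_{\widetilde\gamma}(\widetilde X_n)=0$ literally, you need the pants decomposition and the twist origin chosen $\iota$-equivariantly; an arbitrary pants system containing $\widetilde\gamma$ need not have $\iota^\ast$ act on $\tau_{\widetilde\gamma}$ by negation. This is fixable (lift a decomposition from $\no_g$), but it is not free.

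The paper sidesteps all of this by staying on $\no_g$: since $p$ is one-sided there is no twist coordinate to kill, and Proposition~\ref{prop:pinching} proves directly, via pair-of-pants trigonometry, that convergence to a foliation with one-sided atom $p$ forces $\lhyp(p)\to 0$. Your lift-and-kill-the-twist step is a correct reinterpretation of ``one-sided curves have no twist parameter'', but it relocates rather than removes the analytic work, and in fact moves it to the harder (orientable) setting.
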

The key idea of the proof is proving a quantitative estimate on the Fenchel-Nielsen coordinates of points converging to points in $\pmf^-(\no_g)$.
\begin{proposition}
  \label{prop:pinching}
  Let $\{m_i\}$ be a sequence of points in $\teich(\no_g)$ converging to a projective measured foliation $[\lambda]$.
  If $p$ is a one-sided atom of $\lambda$, for any Fenchel-Nielsen coordinate chart containing $p$ as a cuff, the length coordinate of $p$ goes to $0$.
\end{proposition}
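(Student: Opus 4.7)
The plan is to argue by contradiction, exploiting the fact that the Fenchel--Nielsen chart around a one-sided cuff $p$ has no twist coordinate, so there is no ``twist direction'' for the sequence to degenerate in toward $[p]$. Suppose $\ell_{m_i}(p) \not\to 0$ and, after passing to a subsequence, $\ell_{m_i}(p) \geq \varepsilon_0 > 0$. Lift via \autoref{thm:embedding-teich} to obtain $\tilde m_i \in \teich(\os_{g-1})$ in the $\overline{\iota^*}$-fixed locus, so that $\tilde p$ (the two-sided $\iota$-invariant lift of $p$) satisfies $\ell_{\tilde m_i}(\tilde p) = 2\ell_{m_i}(p) \geq 2\varepsilon_0$ and is an atom of $\tilde\lambda$ with weight $2w > 0$. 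Choose $c_i > 0$ with $c_i \ell_{\tilde m_i}(\gamma) \to i(\tilde\lambda,\gamma)$ for all curves $\gamma$; since $\tilde p$ is simple, $c_i \ell_{\tilde m_i}(\tilde p) \to i(\tilde \lambda, \tilde p) = 0$.

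Next, I would extract the twist constraint coming from $\overline{\iota^*}$-invariance. A direct check shows that $\overline{\iota^*}$ conjugates $T_{\tilde p}$ to $T_{\tilde p}^{-1}$: the factor $\iota^*$ reverses the sign of the twist because $\iota$ is orientation-reversing and fixes $\tilde p$ setwise, while conjugation commutes with the mapping class action on markings. Hence on the $\overline{\iota^*}$-fixed locus, the twist $\tau(\tilde p)$ is pinned to the two-point set fixed by the involution $\tau \mapsto -\tau + c \pmod{\ell(\tilde p)}$, so $|\tau^{(i)}(\tilde p)|$ is bounded by a fixed multiple of $\ell_{\tilde m_i}(\tilde p)$. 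Combined with $c_i \ell_{\tilde m_i}(\tilde p) \to 0$, this yields $c_i |\tau^{(i)}(\tilde p)| \to 0$.

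The heart of the argument is a collar length estimate. For any two-sided simple closed curve $\eta$ with $i(\eta,\tilde p) > 0$, the portion of the geodesic representative of $\eta$ lying inside the standard collar of $\tilde p$ is at most
\[
C \cdot i(\eta, \tilde p) \cdot \sqrt{w(\ell_{\tilde m_i}(\tilde p))^2 + \tau^{(i)}(\tilde p)^2},
\]
where $w(\cdot)$ is the collar width. If $\ell_{\tilde m_i}(\tilde p)$ stays bounded above, $w$ is bounded and the rescaled bound tends to $0$ since $c_i \to 0$ and $c_i \tau^{(i)} \to 0$; if $\ell_{\tilde m_i}(\tilde p) \to \infty$, then $w \to 0$ and the same conclusion holds. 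Thus the rescaled inside-collar contribution vanishes. Writing $\tilde \lambda = 2w \tilde p + \tilde\lambda'$ where $\tilde\lambda'$ has support disjoint from $\tilde p$ (leaves of a lamination being disjoint), one then chooses $\eta$ so that its outside-collar arcs can be closed up into a curve $\eta'$ disjoint from $\tilde p$ and isotopic to $\eta$ off a neighbourhood of $\tilde p$. The outside contribution is then essentially $\ell_{\tilde m_i}(\eta')$, whose rescaled limit is $i(\tilde\lambda', \eta') = i(\tilde\lambda', \eta)$. Equating this with $c_i \ell_{\tilde m_i}(\eta) \to i(\tilde\lambda, \eta) = 2w \cdot i(\tilde p, \eta) + i(\tilde\lambda', \eta)$ forces $2w \cdot i(\tilde p,\eta) = 0$, hence $w = 0$, contradicting the atom hypothesis.

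The main obstacle is the surgery step in the last paragraph: when the support of $\tilde\lambda'$ fills the complement of $\tilde p$, no $\eta'$ disjoint from both $\tilde p$ and $\text{supp}(\tilde\lambda')$ exists, and the ``outside'' contribution must instead be controlled crossing-by-crossing. I expect this can be handled by decomposing $\eta$ into arcs in a pants decomposition containing $\tilde p$ as a cuff and writing $\ell_{\tilde m_i}(\eta) \leq C \sum_\alpha i(\eta, \alpha) \cdot \sqrt{w(\ell_\alpha^{(i)})^2 + (\tau_\alpha^{(i)})^2}$, then separating the $\alpha = \tilde p$ term (which vanishes after rescaling) from the others (whose contribution to the limit can only account for $i(\tilde\lambda', \eta)$); this requires standard product-region estimates near each short or twisted cuff and is where the bulk of the technical work lies.
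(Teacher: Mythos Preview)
Your approach is genuinely different from the paper's. The paper never passes to the orientable double cover; it works directly on $\no_g$ by fixing a single pair of pants with cuffs $p_0=p,\,p_1,\,p_2$ and a curve $p_3$ inside it with $i(p_3,\lambda)\geq 1$. The crucial inequality is $\lhyp(p_3)\leq \lhyp(p_0)+\lhyp(p_4)$, where $p_4$ is the orthogeodesic, and then an \emph{exact} right-angled pentagon identity expresses $\lhyp(p_4)$ in terms of $\lhyp(p_0)$ and $\lhyp(p_1)$. Assuming $\lhyp(p_0)\geq\varepsilon$, this forces $\lhyp(p_3)\leq \lhyp(p_0)+2\sinh^{-1}\!\big(\cosh(\lhyp(p_1)/2)/\varepsilon'\big)$, hence $\lhyp(p_3)\sim\lhyp(p_1)$ asymptotically. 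The contradiction then comes from comparing the ratios $\lhyp(p_0)/\lhyp(p_3)\to 0$ and $\lhyp(p_1)/\lhyp(p_3)\to i(p_1,\lambda)/i(p_3,\lambda)<1$, after arranging (via a surgery on $p_1$ along leaves of $\lambda_{\mathrm{Leb}}$) that $i(p_1,\lambda)<1$. So the paper's ``no twist'' input is the same as yours---it is why $\lhyp(p_3)\leq \lhyp(p_0)+\lhyp(p_4)$ has no twist term---but the comparison curve is a \emph{specific} cuff $p_1$ rather than an abstract surgered $\eta'$, and the estimate is an identity, not an inequality up to constants.

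The gap you flag is real, and your proposed fix does not close it as written. An upper bound of the shape $\ell_{\tilde m_i}(\eta)\leq C\sum_\alpha i(\eta,\alpha)\sqrt{w(\ell_\alpha^{(i)})^2+(\tau_\alpha^{(i)})^2}$ with an unspecified constant $C>1$ cannot yield the desired contradiction: after rescaling you would only get $i(\tilde\lambda,\eta)\leq C\cdot(\text{non-}\tilde p\text{ contribution})$, and there is no reason the right-hand side is bounded by $i(\tilde\lambda',\eta)$ rather than $C\cdot i(\tilde\lambda',\eta)$. What you actually need is a comparison with multiplicative constant equal to $1$ in the leading term, i.e.\ a curve $\eta'$ disjoint from $\tilde p$ with $\ell(\eta)\leq \ell(\eta')+O(1)$ \emph{and} $i(\tilde\lambda',\eta')\leq i(\tilde\lambda',\eta)$. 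Producing such an $\eta'$ in general is exactly the difficulty you identify when $\tilde\lambda'$ fills the complement. The paper resolves this by confining everything to one pair of pants: $p_3$ plays the role of $\eta$, $p_1$ plays the role of $\eta'$, the pentagon identity gives the constant-$1$ comparison, and the residual problem of making $i(p_1,\lambda)$ small is handled by a direct surgery on $p_1$ along a dense leaf. If you want to salvage your route, the cleanest repair is to abandon the coarse product-region inequality and instead mimic the paper's pair-of-pants computation on the double cover (or, simpler, drop the lift entirely and run the argument downstairs).
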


\textit{Outline of proof.} The proof of \autoref{prop:pinching} proceeds in two steps:
\begin{enumerate}[(i)]
\item We first show that there is a curve $p_3$ intersecting $p$ such that $p_3$ is left invariant by Dehn twisting along the two-sided curve that deformation retracts onto $2p$ (when $p$ is thought of as an element in $\pi_1(\no_g)$).
  We do so in  \autoref{claim:one-sided-rigidity} and \autoref{claim:intersects-atmost-once}.
  This gives an upper bound for the length of $p_3$ in terms of the length of $p$, and an orthogeodesic going through $p$.
\item We use the upper bound obtained in the previous step to show that if the length of $p_3$ goes to $\infty$, the length of $p$ must go to $0$.
  This result can be thought of as a converse to the collar lemma, using the additional hypotheses we manage to obtain from the previous step.
\end{enumerate}

\begin{proof}[Proof of \autoref{prop:pinching}]
  Consider the following decomposition of the measured foliation $\lambda$.
  \begin{align*}
    \lambda = 1 \cdot p + \lambda_{\mathrm{at}} + \lambda_{\mathrm{Leb}}
  \end{align*}
  Here, $\lambda_{\mathrm{at}}$ are the minimal components on periodic components other than $p$, i.e. cylinders and Möbius strips, and $\lambda_{\mathrm{Leb}}$ are non-periodic minimal components.
  In the above expression, $p$ is the one-sided curve considered as a measured foliation (since we're picking a representative of $[\lambda]$, we can pick one such that $p$ has weight $1$).

  Pick simple closed curves $p_0$, $p_1$, and $p_2$, where $p_0$ is the curve $p$, and $\{p_0, p_1, p_2\}$ bound a pair of pants.
  Furthermore, we impose the following conditions on $p_1$ and $p_2$.
  \begin{align*}
    i(p_1, \lambda_{\mathrm{at}}) &= 0 \\
    i(p_2, \lambda_{\mathrm{at}}) &= 0
  \end{align*}
  Note that this can always be done, by deleting the support of $\lambda_{\mathrm{at}}$, and looking at the resulting subsurfaces.
  Neither $p_1$ nor $p_2$ can be the same as $p_0$, since $p_0$ is one-sided.

  Consider now a collection of curves $\{q\}$ which satisfy the following two constraints.
  \begin{enumerate}[(i)]
  \item $i(q, p_0) = 1$.
  \item $i(q, p_1) = 0$ and $i(q, p_2) = 0$.
  \end{enumerate}
  We use the fact that $p_0$ is one-sided to make the following claim.
  \begin{claim}
    \label{claim:one-sided-rigidity}
    There is exactly one curve $q$ up to homotopy that satisfies conditions $(i)$ and $(ii)$.
  \end{claim}
  \begin{proof}
    Let $q_1$ and $q_2$ be two curves satisfying both the conditions.
    We can assume without loss of generality that both $q_1$ and $q_2$ intersect $p_0$ at the same point.
    We now delete the curves $p_0$, $p_1$, and $p_2$ to get a pair of pants $\mathcal{P}$: denote the boundary component corresponding to $p_0$ by $\widetilde{p_0}$, and the arcs corresponding to $q_1$ and $q_2$ by $\widetilde{q_1}$ and $\widetilde{q_2}$.
    Since $p_0$ was one-sided, $\widetilde{q_1}$ and $\widetilde{q_2}$ intersect $\widetilde{p_0}$ at two points, which are diametrically opposite (with respect to the induced metric on the geodesic $\widetilde{p_0}$).

    On a pair of pants, two arcs going from a boundary component to the same component must differ by Dehn twists along that component up to homotopy relative to the boundary components: this is a consequence of the fact that the mapping class group of $\mathcal{P}$ is $\mathbb{Z}^3$, where each $\mathbb{Z}$ component is generated by a Dehn twist along a boundary component.
    This means that there is a some Dehn twist $D$ along the boundary component $\widetilde{p_0}$ such that $D\widetilde{q_1}$ is homotopic to $\widetilde{q_2}$ relative to its endpoints. Let $\widetilde{q_2}$ now denote $D\widetilde{q_1}$.

    We claim that after quotienting $\widetilde{p_0}$ by the antipodal map, $\widetilde{q_1}$ and $\widetilde{q_2}$ map to homotopic curves.
    The homotopy is obtained by moving the point of intersection of $\wt{q_2}$ and $p_0$ twice around the curve $p_0$.

    \autoref{fig:q1q2} shows the two arcs on $\mathcal{P}$ and \autoref{fig:q1toq2} shows the homotopy on the quotient that takes $\widetilde{q_2}$ to $\widetilde{q_1}$ (the movement of the blue arc is indicated by the blue arrows in \autoref{fig:q1toq2}).
    \begin{figure}[h]
      \centering
    \def\svgscale{0.6}
    %% Creator: Inkscape 1.2.1 (9c6d41e410, 2022-07-14), www.inkscape.org
%% PDF/EPS/PS + LaTeX output extension by Johan Engelen, 2010
%% Accompanies image file 'q1q2.pdf' (pdf, eps, ps)
%%
%% To include the image in your LaTeX document, write
%%   \input{<filename>.pdf_tex}
%%  instead of
%%   \includegraphics{<filename>.pdf}
%% To scale the image, write
%%   \def\svgwidth{<desired width>}
%%   \input{<filename>.pdf_tex}
%%  instead of
%%   \includegraphics[width=<desired width>]{<filename>.pdf}
%%
%% Images with a different path to the parent latex file can
%% be accessed with the `import' package (which may need to be
%% installed) using
%%   \usepackage{import}
%% in the preamble, and then including the image with
%%   \import{<path to file>}{<filename>.pdf_tex}
%% Alternatively, one can specify
%%   \graphicspath{{<path to file>/}}
%% 
%% For more information, please see info/svg-inkscape on CTAN:
%%   http://tug.ctan.org/tex-archive/info/svg-inkscape
%%
\begingroup%
  \makeatletter%
  \providecommand\color[2][]{%
    \errmessage{(Inkscape) Color is used for the text in Inkscape, but the package 'color.sty' is not loaded}%
    \renewcommand\color[2][]{}%
  }%
  \providecommand\transparent[1]{%
    \errmessage{(Inkscape) Transparency is used (non-zero) for the text in Inkscape, but the package 'transparent.sty' is not loaded}%
    \renewcommand\transparent[1]{}%
  }%
  \providecommand\rotatebox[2]{#2}%
  \newcommand*\fsize{\dimexpr\f@size pt\relax}%
  \newcommand*\lineheight[1]{\fontsize{\fsize}{#1\fsize}\selectfont}%
  \ifx\svgwidth\undefined%
    \setlength{\unitlength}{255.80665889bp}%
    \ifx\svgscale\undefined%
      \relax%
    \else%
      \setlength{\unitlength}{\unitlength * \real{\svgscale}}%
    \fi%
  \else%
    \setlength{\unitlength}{\svgwidth}%
  \fi%
  \global\let\svgwidth\undefined%
  \global\let\svgscale\undefined%
  \makeatother%
  \begin{picture}(1,1)%
    \lineheight{1}%
    \setlength\tabcolsep{0pt}%
    \put(0,0){\includegraphics[width=\unitlength,page=1]{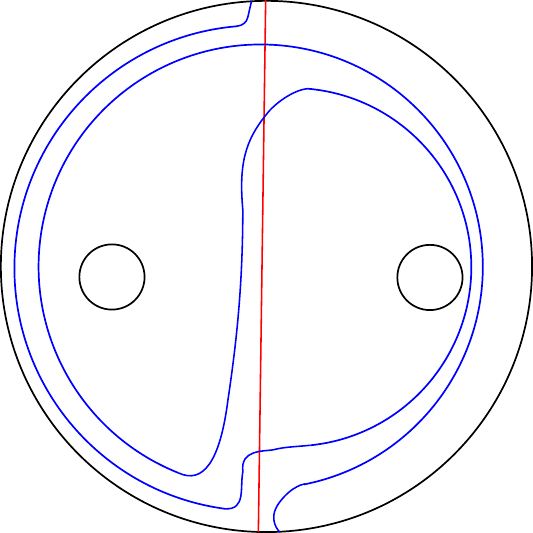}}%
    \put(0.52868411,0.44949893){\makebox(0,0)[lt]{\lineheight{1.25}\smash{\begin{tabular}[t]{l}$\wt{q_1}$\end{tabular}}}}%
    \put(0.37587016,0.44386964){\makebox(0,0)[lt]{\lineheight{1.25}\smash{\begin{tabular}[t]{l}$\wt{q_2}$\end{tabular}}}}%
  \end{picture}%
\endgroup%

      \caption{The arcs $\widetilde{q_1}$ and $\widetilde{q_2}$.}
      \label{fig:q1q2}
    \end{figure}

    \begin{figure}[h]
      \centering
    \def\svgscale{0.5}
    \import{./images/}{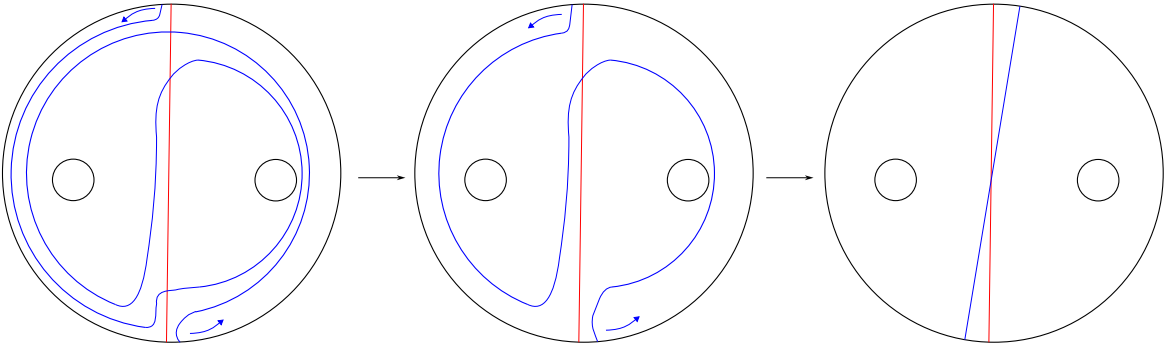}

      \caption{Homotopy taking $q_2$ to $q_1$.}
      \label{fig:q1toq2}
    \end{figure}

    We have thus constructed the desired homotopy from $q_1$ to $q_2$.
    The example in \autoref{fig:q1q2} also shows there is at least one such curve, proving the claim.
  \end{proof}

  Let $p_3$ be the geodesic representative of the curve described in Claim \ref{claim:one-sided-rigidity}.
  We also define $p_4$ to be the orthogeodesic arc from $p_0$ to itself.
  We make the following claim about $p_3$ and $p_4$.

  \begin{claim}
    \label{claim:intersects-atmost-once}
    The arc $p_4$ and the curve $p_3$ intersect at most once.
  \end{claim}
  \begin{proof}
    We know from Claim \ref{claim:one-sided-rigidity} that $p_3$ is homotopic to any other curve which intersects $p_0$ exactly once and does not intersect $p_1$ and $p_2$.
    It then suffices to construct a curve $q$ that intersects $p_4$ at most once: since $p_3$ is the geodesic representative of $q$, it will also intersect $p_4$ at most once.
    We construct $q$ by starting along $p_0$, near the point where $p_4$ intersects $p_0$, and then travel parallel to $p_4$.
    When the curve reaches $p_0$ again, it will need to turn left or right to close up. In one of these cases, it will have to intersect $p_4$ once, and in the other case, it will not intersect $p_4$ at all.
  \end{proof}
  With claims \ref{claim:one-sided-rigidity} and \ref{claim:intersects-atmost-once}, we have the following picture of $\left\{ p_0, p_1, p_2, p_3, p_4 \right\}$ on the pair of pants.
  \begin{figure}[h]
    \centering
    \def\svgscale{0.75}
    %% Creator: Inkscape 1.0.1 (3bc2e813f5, 2020-09-07), www.inkscape.org
%% PDF/EPS/PS + LaTeX output extension by Johan Engelen, 2010
%% Accompanies image file 'pants.pdf' (pdf, eps, ps)
%%
%% To include the image in your LaTeX document, write
%%   \input{<filename>.pdf_tex}
%%  instead of
%%   \includegraphics{<filename>.pdf}
%% To scale the image, write
%%   \def\svgwidth{<desired width>}
%%   \input{<filename>.pdf_tex}
%%  instead of
%%   \includegraphics[width=<desired width>]{<filename>.pdf}
%%
%% Images with a different path to the parent latex file can
%% be accessed with the `import' package (which may need to be
%% installed) using
%%   \usepackage{import}
%% in the preamble, and then including the image with
%%   \import{<path to file>}{<filename>.pdf_tex}
%% Alternatively, one can specify
%%   \graphicspath{{<path to file>/}}
%%
%% For more information, please see info/svg-inkscape on CTAN:
%%   http://tug.ctan.org/tex-archive/info/svg-inkscape
%%
\begingroup%
  \makeatletter%
  \providecommand\color[2][]{%
    \errmessage{(Inkscape) Color is used for the text in Inkscape, but the package 'color.sty' is not loaded}%
    \renewcommand\color[2][]{}%
  }%
  \providecommand\transparent[1]{%
    \errmessage{(Inkscape) Transparency is used (non-zero) for the text in Inkscape, but the package 'transparent.sty' is not loaded}%
    \renewcommand\transparent[1]{}%
  }%
  \providecommand\rotatebox[2]{#2}%
  \newcommand*\fsize{\dimexpr\f@size pt\relax}%
  \newcommand*\lineheight[1]{\fontsize{\fsize}{#1\fsize}\selectfont}%
  \ifx\svgwidth\undefined%
    \setlength{\unitlength}{260.94019372bp}%
    \ifx\svgscale\undefined%
      \relax%
    \else%
      \setlength{\unitlength}{\unitlength * \real{\svgscale}}%
    \fi%
  \else%
    \setlength{\unitlength}{\svgwidth}%
  \fi%
  \global\let\svgwidth\undefined%
  \global\let\svgscale\undefined%
  \makeatother%
  \begin{picture}(1,0.88200334)%
    \lineheight{1}%
    \setlength\tabcolsep{0pt}%
    \put(0,0){\includegraphics[width=\unitlength,page=1]{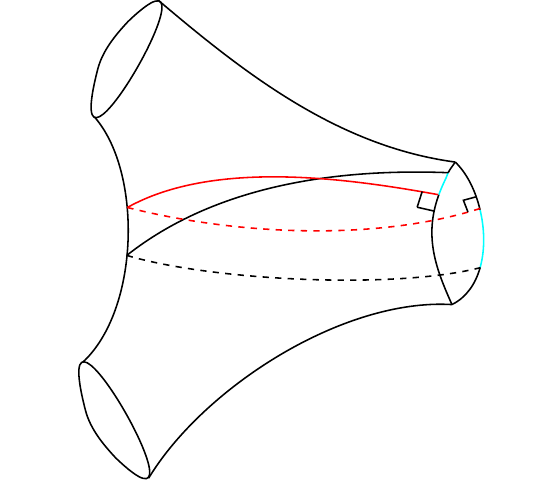}}%
    \put(0.12957868,0.81764183){\color[rgb]{0,0,0}\makebox(0,0)[lt]{\lineheight{1.25}\smash{\begin{tabular}[t]{l}$p_1$\end{tabular}}}}%
    \put(0.10381729,0.08488997){\color[rgb]{0,0,0}\makebox(0,0)[lt]{\lineheight{1.25}\smash{\begin{tabular}[t]{l}$p_2$\end{tabular}}}}%
    \put(0.86385633,0.30507798){\color[rgb]{0,0,0}\makebox(0,0)[lt]{\lineheight{1.25}\smash{\begin{tabular}[t]{l}$p_0$\end{tabular}}}}%
    \put(0.32207475,0.31452543){\color[rgb]{0,0,0}\makebox(0,0)[lt]{\lineheight{1.25}\smash{\begin{tabular}[t]{l}$p_3$\end{tabular}}}}%
    \put(0.28738299,0.56952263){\color[rgb]{0,0,0}\makebox(0,0)[lt]{\lineheight{1.25}\smash{\begin{tabular}[t]{l}$p_4$\end{tabular}}}}%
  \end{picture}%
\endgroup%

    \caption{The curves restricted to a pair of pants.}
    \label{fig:pants}
  \end{figure}

  Since $i(p_3, p_0) = 1$, and $p_0$ is a component of the limiting foliation, the length of $p_3$ must go to $\infty$.
  On the other hand, we can bound the length of $p_3$ above and below via the lengths of the orthogeodesic $p_4$ and the length of $p_0$.
 \begin{equation}
   \label{eq:up-1}
   \ell(p_3) \leq \ell(p_4) + \ell(p_0)
 \end{equation}
 Observe that the upper bound follows from \autoref{claim:intersects-atmost-once} and the fact that the red and cyan arcs are isotopic to $p_3$ relative to their endpoints being fixed.
 The cyan arcs have length at most $\ell(p_0)$ in this setting; if one allowed a twist parameter, the length of the cyan arcs would be proportional to the twist parameters.
 The point of this inequality is that we can estimate $\ell(p_4)$ using $\ell(p_0)$, $\ell(p_1)$ and $\ell(p_2)$ via hyperbolic trigonometry.
 Cut the pair of pants along the seams, to get a hyperbolic right-angled hexagon, pictured in \autoref{fig:hexagon}.
 \begin{figure}[h]
   \centering
    \def\svgscale{1}
    %% Creator: Inkscape 1.0.1 (3bc2e813f5, 2020-09-07), www.inkscape.org
%% PDF/EPS/PS + LaTeX output extension by Johan Engelen, 2010
%% Accompanies image file 'hexagon.pdf' (pdf, eps, ps)
%%
%% To include the image in your LaTeX document, write
%%   \input{<filename>.pdf_tex}
%%  instead of
%%   \includegraphics{<filename>.pdf}
%% To scale the image, write
%%   \def\svgwidth{<desired width>}
%%   \input{<filename>.pdf_tex}
%%  instead of
%%   \includegraphics[width=<desired width>]{<filename>.pdf}
%%
%% Images with a different path to the parent latex file can
%% be accessed with the `import' package (which may need to be
%% installed) using
%%   \usepackage{import}
%% in the preamble, and then including the image with
%%   \import{<path to file>}{<filename>.pdf_tex}
%% Alternatively, one can specify
%%   \graphicspath{{<path to file>/}}
%%
%% For more information, please see info/svg-inkscape on CTAN:
%%   http://tug.ctan.org/tex-archive/info/svg-inkscape
%%
\begingroup%
  \makeatletter%
  \providecommand\color[2][]{%
    \errmessage{(Inkscape) Color is used for the text in Inkscape, but the package 'color.sty' is not loaded}%
    \renewcommand\color[2][]{}%
  }%
  \providecommand\transparent[1]{%
    \errmessage{(Inkscape) Transparency is used (non-zero) for the text in Inkscape, but the package 'transparent.sty' is not loaded}%
    \renewcommand\transparent[1]{}%
  }%
  \providecommand\rotatebox[2]{#2}%
  \newcommand*\fsize{\dimexpr\f@size pt\relax}%
  \newcommand*\lineheight[1]{\fontsize{\fsize}{#1\fsize}\selectfont}%
  \ifx\svgwidth\undefined%
    \setlength{\unitlength}{267.66646498bp}%
    \ifx\svgscale\undefined%
      \relax%
    \else%
      \setlength{\unitlength}{\unitlength * \real{\svgscale}}%
    \fi%
  \else%
    \setlength{\unitlength}{\svgwidth}%
  \fi%
  \global\let\svgwidth\undefined%
  \global\let\svgscale\undefined%
  \makeatother%
  \begin{picture}(1,0.57003547)%
    \lineheight{1}%
    \setlength\tabcolsep{0pt}%
    \put(0,0){\includegraphics[width=\unitlength,page=1]{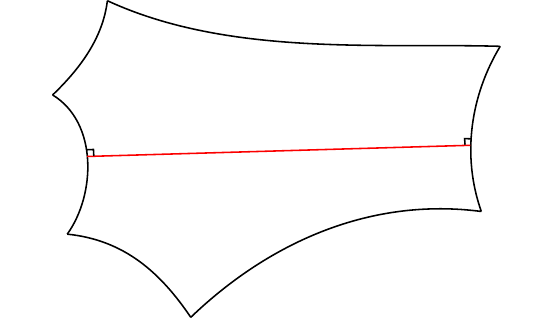}}%
    \put(0.42584046,0.32337803){\color[rgb]{0,0,0}\makebox(0,0)[lt]{\lineheight{1.25}\smash{\begin{tabular}[t]{l}$\frac{\ell(p_4)}{2}$\end{tabular}}}}%
    \put(0.08124051,0.4990387){\color[rgb]{0,0,0}\makebox(0,0)[lt]{\lineheight{1.25}\smash{\begin{tabular}[t]{l}$\frac{\ell(p_1)}{2}$\end{tabular}}}}%
    \put(0.12344631,0.06853324){\color[rgb]{0,0,0}\makebox(0,0)[lt]{\lineheight{1.25}\smash{\begin{tabular}[t]{l}$\frac{\ell(p_2)}{2}$\end{tabular}}}}%
    \put(0.87404692,0.39677773){\color[rgb]{0,0,0}\makebox(0,0)[lt]{\lineheight{1.25}\smash{\begin{tabular}[t]{l}$f \cdot \ell(p_0)$\end{tabular}}}}%
    \put(0.86088542,0.23219518){\color[rgb]{0,0,0}\makebox(0,0)[lt]{\lineheight{1.25}\smash{\begin{tabular}[t]{l}$(1-f) \cdot \ell(p_0)$\end{tabular}}}}%
  \end{picture}%
\endgroup%

   \caption{The right angled hexagon obtained by cutting the pants along the seams.}
   \label{fig:hexagon}
 \end{figure}

 To get good estimates on $\ell(p_4)$, we need a universal lower bound on the fraction $f$ as we move in the Teichm\"uller space.
 The analysis splits up into two cases, but it is not {a priori} clear that these two cases are exhaustive.
 We will deal with the two cases, and then show that any other case can be reduced to the second case by changing $p_1$ and $p_2$.

\subsection*{Case I}
\label{case1}
We're in this case if $p_1$ and $p_2$ don't intersect the foliation $\lambda$ at all.
\begin{align*}
  i(p_1, \lambda) &= 0 \\
  i(p_2, \lambda) &= 0
\end{align*}
In this case, we can pass to a subsequence of $\{m_i\}$ such that the corresponding values of $f$ are always greater than $\frac{1}{2}$ or less than $\frac{1}{2}$.
In the former case, we focus on $p_1$, and in the latter case, we focus on $p_2$.
Without loss of generality, we'll suppose $f \geq \frac{1}{2}$.
In that case, we cut along the orthogeodesic $p_4$, and get a hyperbolic right-angled pentagon, which is the top half of \autoref{fig:hexagon}.

Let $\ell_i(p_k)$ denote the length of $p_k$ on the hyperbolic surface corresponding to $m_i$.
We can relate $\ell_i(p_0)$, $\ell_i(p_1)$, and $\ell_i(p_4)$ using the following identity for hyperbolic right-angled pentagons (see \cite{thurston1979geometry} for the proof of the identity).
\begin{align}
  \label{eq:pentagon}
  \sinh\left( f \cdot \ell_i(p_0) \right)  \cdot \sinh\left( \frac{\ell_i(p_4)}{2} \right)  = \cosh\left( \frac{\ell_i(p_1)}{2} \right)
\end{align}
Now suppose for contradiction's sake that $\ell_i(p_0)$ does not go to $0$.
Then we must have that for all $i$, $\ell_i(p_0) \geq 2\varepsilon$ for some $\varepsilon > 0$. By the lower bound on $f$, we have that the first term on the left hand side of the above expression is bounded below by $\varepsilon$.
Rearranging the terms gives us the following upper bound on $\ell_i(p_4)$.
\begin{equation}
  \label{eq:up-2}
  \ell_i(p_4) \leq 2 \cdot \sinh^{-1} \left( \frac{\cosh \left( \frac{\ell_i(p_1)}{2} \right)}{\varepsilon} \right)
\end{equation}
Using \eqref{eq:up-1} and \eqref{eq:up-2}, we get an upper bound for $\ell_i(p_3)$.
\begin{equation}
  \label{eq:up-3}
  \ell_i(p_3) \leq \ell_i(p_0) +
  2 \sinh^{-1} \left( \frac{\cosh \left( \frac{\ell_i(p_1)}{2} \right)}{\varepsilon} \right)
\end{equation}

Since $\frac{i(p_0, \lambda)}{i(p_3, \lambda)} = 0$, as $\{m_i\}$ approaches $\lambda$, the ratio of lengths of $p_0$ and $p_3$ approach $0$.
\begin{equation}
  \label{eq:up-4}
  \lim_{i \to \infty} \frac{\ell_i(p_0)}{\ell_i(p_3)} = 0
\end{equation}
Using \eqref{eq:up-3}, we have the lower bound for $\frac{\ell_i(p_0)}{\ell_i(p_3)}$.
\begin{equation}
  \label{eq:up-5}
  \frac{\ell_i(p_0)}{  \ell_i(p_0) +
  2 \sinh^{-1} \left( \frac{\cosh \left( \frac{\ell_i(p_1)}{2} \right)}{\varepsilon} \right)} \leq \frac{\ell_i(p_0)}{\ell_i(p_3)}
\end{equation}
By \eqref{eq:up-4}, the left hand side of \eqref{eq:up-5} must go to $0$, or equivalently, the following holds.
\begin{align}
  \label{eq:up-5dot5}
\lim_{i \to \infty} \frac{\ell_i(p_0)}{2 \sinh^{-1} \left( \frac{\cosh \left( \frac{\ell_i(p_1)}{2} \right)}{\varepsilon} \right)}
= 0
\end{align}
But we also have that $\ell_i(p_0) > \varepsilon$: that means the only way that the above limit is $0$ if $\ell_i(p_1)$ goes to $\infty$.
This is where the hypotheses of the Case I come in.
Since $i(p_1, \lambda)$ is $0$, the following equality must hold.
\begin{equation}
  \label{eq:up-6}
  \lim_{i \to \infty} \frac{\ell_i(p_1)}{\ell_i(p_3)} = 0
\end{equation}
This means the lower bound for $\frac{\ell_i(p_1)}{\ell_i(p_3)}$ must go to $0$.
\begin{equation}
  \label{eq:up-7}
   \lim_{i \to \infty} \frac{\ell_i(p_1)}{  \ell_i(p_0) +
  2 \sinh^{-1} \left( \frac{\cosh \left( \frac{\ell_i(p_1)}{2} \right)}{\varepsilon} \right)
  } = 0
\end{equation}
From \eqref{eq:up-5dot5}, we have the following.
\begin{equation}
  \label{eq:up-7dot5}
  \lim_{i \to \infty} \frac{\ell_i(p_1)}{  \ell_i(p_0) +
  2 \sinh^{-1} \left( \frac{\cosh \left( \frac{\ell_i(p_1)}{2} \right)}{\varepsilon} \right)
  }
  =
  \lim_{i \to \infty} \frac{\ell_i(p_1)}{
  2 \sinh^{-1} \left( \frac{\cosh \left( \frac{\ell_i(p_1)}{2} \right)}{\varepsilon} \right)
  }
\end{equation}

But as $\ell_i(p_1)$ approaches $\infty$, the right hand side of \eqref{eq:up-7dot5} approaches a non-zero constant value, which contradicts the identity in \eqref{eq:up-7}.
This contradiction means our assumption that $\ell_i(p_0)$ was bounded away from $0$ must be wrong, and thus proves the result in Case I.

\subsection*{Case II}
We're in this case if the following inequality holds.
\begin{equation}
  \label{eq:up-8}
  0 < i(p_1, \lambda) < 1
\end{equation}
The picture in this case looks similar to \autoref{fig:hexagon}.
However, we can't necessarily pass to a subsequence where $f \geq \frac{1}{2}$ (and the trick of working with $1-f$ won't work, since we know nothing about $p_2$).
This is one of the points where the hypothesis on $p_1$ comes in.
Since $\frac{i(p_2, \lambda)}{i(p_1, \lambda)}$ is finite, we must have that the ratio of lengths $\frac{\ell_i(p_2)}{\ell_i(p_1)}$ approaches some finite value as well.
The fraction $f$ is a continuous function of $\frac{\ell_i(p_2)}{\ell_i(p_1)}$, approaching $0$ only as the ratio approaches $\infty$ (this follows from the same identity as \eqref{eq:pentagon}). Since the ratio approaches a finite value, we have a positive lower bound $f_0$ for $f$.

Assuming as before that $\ell_i(p_0)$ is bounded away from $0$, and $\tau(p_0)$ bounded away from $\pm \infty$, and repeating the calculations of the previous case, we get the following two inequalities.
\begin{equation}
  \label{eq:up-9}
  \frac{\ell_i(p_1)}{\ell_i(p_3)} \geq \frac{\ell_i(p_1) }{\ell_i(p_0) +
  2 \sinh^{-1} \left( \frac{\cosh\left( \frac{\ell_i(p_1)}{2} \right)}{f_0 \varepsilon} \right)
  }
\end{equation}
\begin{equation}
  \label{eq:up-10}
  \frac{\ell_i(p_0)}{\ell_i(p_3)} \geq \frac{\ell_i(p_0)}{\ell_i(p_0) +
  2 \sinh^{-1} \left( \frac{\cosh\left( \frac{\ell_i(p_1)}{2} \right)}{f_0 \varepsilon} \right)
  }
\end{equation}
The right hand side of \eqref{eq:up-10} must approach $0$, and that forces either $\ell_i(p_1)$ or $\ell_i(p_0)$ to approach $\infty$.
But that means the right hand term of \eqref{eq:up-9} must approach $1$, which cannot happen, by the hypothesis of case II.
This means $\ell_i(p_0)$ goes to $0$, proving the result in case II.

\subsection*{Reducing to case II} Suppose now that both $p_1$ and $p_2$ have an intersection number larger than $1$ with $\lambda$.
We can modify one of them to have a small intersection number with $\lambda$.
First, we assume that $\lambda_{\mathrm{Leb}}$ is supported on a single minimal component, i.e. every leaf of $\lambda_{\mathrm{Leb}}$ is dense in the support.
We now perform a local surgery on $p_1$: starting at a point on $p_1$ not contained in the support of $\lambda_{\mathrm{Leb}}$, we follow along until we intersect $\lambda_{\mathrm{Leb}}$ for the first time.
We denote this point by $\alpha$.
We now go along $p_1$ in the opposite direction, until we hit the support of $\lambda_{\mathrm{Leb}}$ again, but rather than stopping, we keep going until the arc has intersection number $0 < \delta < 1$ with $\lambda_{\mathrm{Leb}}$.
We then go back to $\alpha$, and follow along a leaf of $\lambda_{\mathrm{Leb}}$ rather than $p_1$, until we hit the arc.
This is guaranteed to happen by the minimality of $\lambda_{\mathrm{Leb}}$.
Once we hit the arc, we continue along the arc, and close up the curve.
This gives a new simple closed curve which intersection number with $\lambda$ is at most $\delta$.
This curve is our replacement for $p_1$.
If $\lambda_{\mathrm{Leb}}$ is not minimal, we repeat this process for each minimal component. We pick $p_2$ in a manner such that $p_0$, $p_1$, and $p_2$ bound a pair of pants.
Since $\delta < 1$, we have reduced to case II.
This concludes the proof of the theorem.
\end{proof}

\begin{remark}[On the orientable version of \autoref{prop:pinching}]
  The same idea also works in the orientable setting, although the analysis of the various cases gets a little more delicate.
  The first change one needs to make is in the statement of the proposition: we no longer need to require $p$ to be a one-sided atom, and correspondingly, either the length coordinate $\ell_i(p)$ can go to $0$, or the twist coordinate $\tau(p_0)$ can go to $\pm \infty$.
  To see how the twist coordinate enters the picture, observe that \eqref{eq:up-1}, which was the main inequality of the proof, turns into the following in the orientable version.
  \begin{equation}
    \label{eq:up-11}
    \ell_i(p_4)  \leq \ell_i(p_3) \leq \tau(p_0) + \ell_i(p_4)
  \end{equation}
  Here, $\tau(p_0)$ is the twist parameter about $p_0$, and $p_4$ is the orthogeodesic multi-arc (there may be one or two orthogeodesics, depending on the two cases described below).

  The proof splits up into two cases, depending on whether both sides of $p$ are the same pair of pants, or distinct pairs of pants.
  This was not an issue in the non-orientable setting, since $p$ was one-sided.
  If both sides of $p$ are the same pair of pants, then the analysis is similar to what we just did, since the curve $p_3$ stays within a single pair of pants.
  In the other, $p_3$ goes through two pair of pants, and its length is a function of the twist parameters, as well the cuff lengths of four curves, rather than two curves, the four curves being the two remaining cuffs of each pair of pants.
  The analysis again splits up into two cases, depending on the intersection number of the cuffs with $\lambda$, but reducing all the other cases to case II becomes tricky because we need to simultaneously reduce the intersection number of two curves, rather than one, as in the non-orientable setting.
  This added complication obscures the main idea of the proof, which is why we chose to only prove the non-orientable version.
\end{remark}

This quantitative estimate of \autoref{prop:pinching} gives us a proof for \autoref{thm:systole-closure}.
\begin{proof}[Proof of \autoref{thm:systole-closure}]
  Suppose that the theorem were false, and there was a foliation $[\lambda] \in \pmf^-(\no_g)$ in the closure of $\systole(\no_g)$.
  Suppose $p$ is a one-sided atom in $\lambda$.
  Then \autoref{prop:pinching} tells us that the hyperbolic length of $p$ goes to $0$, but the length of $p$ must be greater than $\varepsilon$ in $\systole(\no_g)$.
  This contradicts our initial assumption, and the closure of $\systole(\no_g)$ can only intersect $\pmf(\no_g)$ in the complement of $\pmf^-(\no_g)$.
\end{proof}

\begin{corollary}
  \label{cor:geolimset}
  The geometric limit set $\geolim(\mcg(\no_g))$ is contained in $\pmf^+(\no_g)$.
\end{corollary}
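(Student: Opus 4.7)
The plan is to deduce the corollary directly from Theorem \ref{thm:systole-closure} by observing that every $\mcg(\no_g)$-orbit in $\teich(\no_g)$ is contained in a single superlevel set $\systole(\no_g)$ for an appropriate $\varepsilon$. The essential point is that the one-sided systole is a mapping class group invariant, because $\mcg(\no_g)$ acts by isometries on $\teich(\no_g)$ with respect to the Teichm\"uller metric and, more importantly, permutes the set of (isotopy classes of) one-sided simple closed curves among themselves. Hence the function $x \mapsto \inf\{\lhyp(x,\gamma) : \gamma \in \scc^-(\no_g)\}$ is constant on every $\mcg(\no_g)$-orbit.

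First, I would fix an arbitrary basepoint $x \in \teich(\no_g)$ and set
\[
  \varepsilon_0 \coloneqq \inf\{\lhyp(x,\gamma) : \gamma \in \scc^-(\no_g)\}.
\]
A standard argument (using that the set of hyperbolic lengths of simple closed curves on a fixed hyperbolic surface forms a discrete subset of $(0,\infty)$, and that the shortest curve is realized) shows $\varepsilon_0 > 0$. By the invariance remark above, every point of the orbit $\mcg(\no_g) \cdot x$ has one-sided systole equal to $\varepsilon_0$, so the entire orbit lies in $\systole[\varepsilon_0](\no_g)$. Consequently the orbit closure $\overline{\mcg(\no_g) \cdot x}$ lies in the closure $\overline{\systole[\varepsilon_0](\no_g)}$ inside $\teich(\no_g) \cup \pmf(\no_g)$.

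Taking intersections with the Thurston boundary and applying Theorem \ref{thm:systole-closure} with $\varepsilon = \varepsilon_0$ yields
\[
  \Lambda_{\mathrm{geo},x}(\mcg(\no_g)) = \overline{\mcg(\no_g) \cdot x} \cap \pmf(\no_g) \subseteq \overline{\systole[\varepsilon_0](\no_g)} \cap \pmf(\no_g) \subseteq \pmf^+(\no_g).
\]
Since $x$ was arbitrary, taking the union over all $x \in \teich(\no_g)$ gives $\geolim(\mcg(\no_g)) \subseteq \pmf^+(\no_g)$, which is the desired conclusion.

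There is essentially no obstacle here beyond carefully verifying the two soft inputs: that the one-sided systole is strictly positive at every interior point of $\teich(\no_g)$, and that it is $\mcg(\no_g)$-invariant. Both are standard, the first following from discreteness of the length spectrum and the second from the fact that mapping classes preserve the one-sided/two-sided dichotomy (as recalled in the background on $\scc^-$). All of the real work has been done in Theorem \ref{thm:systole-closure} and Proposition \ref{prop:pinching}; the corollary is just the observation that the limit set, by construction, lives inside an $\mcg(\no_g)$-invariant set on which those pinching estimates apply uniformly.
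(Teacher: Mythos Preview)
Your proposal is correct and follows essentially the same route as the paper's own proof: pick $\varepsilon$ so that $x \in \systole(\no_g)$, apply Theorem~\ref{thm:systole-closure}, and take the union over all $x$. The only difference is that you spell out the $\mcg(\no_g)$-invariance of $\systole(\no_g)$ (needed so that the whole orbit, not just the basepoint, lies in the superlevel set), whereas the paper leaves this implicit.
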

\begin{proof}
  Every point $p \in \teich(\no_g)$ is contained in $\systole(\no_g)$ for some small enough $\varepsilon$.
  This means $\Lambda_{\mathrm{geo}, p}(\mcg(\no_g))$ is contained in $\pmf^+(\no_g)$ by \autoref{thm:systole-closure}.
  Taking the union over all $p$ proves the result.
\end{proof}

\section{Failure of quasi-convexity for ${\systole}$}
\label{sec:fail-quasi-conv}

In the setting of Teichm\"uller geometry, convexity is usually too strong of a requirement.
For instance, metric balls in Teichm\"uller space are not convex, but merely quasi-convex (see \autocite{lenzhen2011length}).
\begin{definition}[Quasi-convexity]
  A subset $S$ of ${\teich(S)}$ is said to be quasi-convex if there is some uniform constant $D > 0$ such that the geodesic segment joining any pair of points in $S$ stays within distance $D$ of $S$.
\end{definition}

Our goal for this section will be to prove the following theorem.
\begin{theorem}
  \label{thm:qc-fail}
  For $g \geq 8$, any $\varepsilon > 0$, and all $D > 0$, there exists a Teichm\"uller geodesic segment whose endpoints lie in ${\systole}(\no_g)$ such that some point in the interior of the geodesic is more than distance $D$ from $\systole$.
\end{theorem}

\begin{remark}
  Our methods actually prove the result for all non-orientable hyperbolic surfaces except genus $5$ and $7$.
  This is not because genus $5$ and $7$ are special, but it is rather an artifact of our construction.
  We construct two families of counterexamples, one for genera $4+2j$, and one for genera $9+2j$: it turns out there isn't enough ``room'' on a genus $5$ surface to replicate our genus $9$ construction, but it's quite likely an alternate construction will work.
\end{remark}

We begin by finding Teichmüller geodesic segments whose endpoints lie in $\systole$ such that at a point in the interior, some one-sided curve gets very short.
Once we have arbitrarily short one-sided curves in the interior of the geodesic segments, estimates relating Teichmüller distance and ratios of hyperbolic lengths of curves will give us the result.
\begin{proposition}
  \label{prop:very-short-curves}
  For all $g \geq 8$ and any $\delta > 0$, there exists a Teichmüller geodesic segment $l$ whose endpoints lie in $\systole(\no_g)$, and a point $p$ in $l$ such that some one-sided curve has length less than $\delta$ with respect to the hyperbolic metric on $p$.
\end{proposition}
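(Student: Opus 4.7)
The plan is to construct a Teichm\"uller geodesic segment passing through a preselected point where some one-sided curve is very short, and to verify that flowing outward in both directions along the geodesic eventually enters $\systole(\no_g)$. The strategy mirrors standard constructions in Teichm\"uller dynamics, adapted to the non-orientable setting via the dianalytic quadratic differentials of Section~\ref{sec:backgr-meas-foli}.

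First I would fix a one-sided simple closed curve $c_0$ on $\no_g$ and produce a point $p_0 \in \teich(\no_g)$ with $\lhyp(c_0, p_0) = \delta/2$, which follows from continuity of length functions in a Fenchel--Nielsen-type coordinate system adapted to a pants decomposition having $c_0$ as a cuff. Next I would equip $p_0$ with a DQD $q$ whose vertical and horizontal foliations $\lambda^\pm$ are both filling, uniquely ergodic, and lie in $\pmf^+(\no_g)$. This is the key technical point: uniquely ergodic directions are generic among DQDs by a non-orientable analogue of Kerckhoff--Masur--Smillie (leveraging the non-orientable Masur criterion referenced in the introduction), while $\pmf^+(\no_g)$ has non-empty interior in $\pmf(\no_g)$, so generic DQDs at $p_0$ can be arranged to have both vertical and horizontal foliations inside this open set. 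The hypothesis $g \geq 8$ presumably enters precisely here, ensuring enough topological complexity for these generic choices to succeed.

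Having chosen such $q$, I would argue that $g_t q \in \systole(\no_g)$ for all sufficiently large $|t|$. Unique ergodicity of $\lambda^\pm$ forces $g_t q \to \lambda^+$ and $g_{-t} q \to \lambda^-$ in the Thurston compactification as $t \to +\infty$, and Kerckhoff's extremal length estimate gives
\[
  \mathrm{Ext}(\gamma, g_t q) \asymp i(\gamma, \lambda^+)^2 e^{-t} + i(\gamma, \lambda^-)^2 e^{t}
\]
for every simple closed curve $\gamma$. Because $\lambda^\pm$ are filling, $i(\gamma, \lambda^\pm) > 0$ for every such $\gamma$, so extremal, and hence hyperbolic, length of every simple closed curve tends to infinity as $|t| \to \infty$. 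The collar lemma ensures that only finitely many disjoint curves can simultaneously be shorter than any fixed $\varepsilon$, so for $|t|$ large no one-sided curve has length below $\varepsilon$, placing $g_t q$ in $\systole(\no_g)$.

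Finally, choosing $T > 0$ with $g_{\pm T} q \in \systole(\no_g)$, the segment $l = \{g_t q : -T \leq t \leq T\}$ has endpoints in $\systole(\no_g)$ and interior point $p = g_0 q = p_0$ satisfying $\lhyp(c_0, p) = \delta/2 < \delta$, as required. The principal obstacle will be the second step, producing a DQD whose vertical and horizontal foliations are simultaneously filling, uniquely ergodic, and contained in $\pmf^+(\no_g)$. This requires a careful density argument for uniquely ergodic directions adapted to the dianalytic setting, and is the place where the genus restriction $g \geq 8$ is most plausibly consumed.
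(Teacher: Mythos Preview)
Your approach is quite different from the paper's, but it contains genuine gaps at the key technical step.

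First, two of your supporting claims are false as stated. You assert that $\pmf^+(\no_g)$ has non-empty interior in $\pmf(\no_g)$; in fact the opposite holds: by Danthony--Nogueira (Theorem~\ref{thm:DN90-2}) the set $\pmf^-(\no_g)$ is open and dense, so $\pmf^+(\no_g)$ is closed with empty interior. You also invoke a ``non-orientable analogue of Kerckhoff--Masur--Smillie'' to say uniquely ergodic directions are generic, but Nogueira's theorem (cited in the introduction) says almost every interval exchange with flips has a periodic orbit, so generic DQDs at $p_0$ have vertical foliation in $\pmf^-(\no_g)$, not uniquely ergodic. Thus there is no soft genericity argument available for producing a DQD at the pre-chosen point $p_0$ with both foliations uniquely ergodic; any such $q$ would have to be found by hand.

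Second, even granting such a $q$, your argument that $g_t q$ eventually lies in $\systole(\no_g)$ is incomplete. The estimate $\mathrm{Ext}(\gamma,g_tq)\to\infty$ holds for each \emph{fixed} curve $\gamma$, but the implicit rate depends on $i(\gamma,\lambda^{\pm})$, which can be arbitrarily small. The collar lemma bounds the number of simultaneously short curves, but it does not prevent the identity of the short curve from changing with $t$; indeed the hyperbolic systole along any Teichm\"uller geodesic is bounded above by a genus constant, so some curve is always short. What you actually need is that no \emph{one-sided} curve is short for some large $t$, and nothing in your argument distinguishes one-sided from two-sided curves at this stage. Since one-sided curves can accumulate (in $\pmf$) on uniquely ergodic foliations, this is not automatic.

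The paper sidesteps all of these issues by an explicit construction: it builds, by hand, area-one DQDs (first in genus $4$ and $9$, then by connect sums) whose horizontal and vertical foliations are completely periodic with two-sided cylinder core curves and orientable complementary subsurfaces, while simultaneously containing a high-modulus annulus whose core is the square of a one-sided curve. The large annulus forces a short one-sided curve at $t=0$ via Lemma~\ref{lem:schwarz}; flowing until every cylinder is wide forces any flat-short curve to be two-sided by elementary surgery inside the cylinder decomposition, and Lemma~\ref{lem: relating-flat-hyperbolic} converts this to the hyperbolic statement. No density, genericity, or asymptotic argument is needed, and this is also where the genus restriction is actually used.
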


To prove this result, we will need two lemmas relating hyperbolic and flat lengths.
\begin{lemma}
  \label{lem: relating-flat-hyperbolic}
  Let $q$ be any area $1$ DQD on $\no_g$, and let $\gamma$ be a simple closed curve of $q$.
  Suppose that $\lhyp(\gamma) \leq \delta$ (with respect to the unique hyperbolic metric coming from the flat structure $q$).
  Then $\lflat(\gamma) \leq k \sqrt{\delta}$, where $k$ is some absolute constant.
\end{lemma}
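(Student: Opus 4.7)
The plan is to lift the problem to the orientation double cover, where $q$ becomes an honest quadratic differential and we can appeal to classical comparison estimates between hyperbolic length, extremal length, and flat length. The main input is the folklore inequality $\mathrm{Ext}_X(\alpha) \leq C \cdot \lhyp(\alpha)$ for short simple closed curves $\alpha$ on orientable Riemann surfaces, combined with Rengel's inequality relating flat length to extremal length.

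Concretely, I would let $\pi: \widetilde{X} \to X$ denote the orientation double cover of the hyperbolic (equivalently, conformal) surface underlying $q$, and let $\widetilde{q} = \pi^*q$ be the pulled-back quadratic differential on $\os_{g-1}$, which has area $2$ and induces the unique hyperbolic metric on $\widetilde{X}$ conformal to $\widetilde{q}$. If $\gamma$ is two-sided, $\pi^{-1}(\gamma)$ is a disjoint union of two simple closed curves each isometric to $\gamma$ in both the flat and hyperbolic metrics; if $\gamma$ is one-sided, $\pi^{-1}(\gamma)$ is a single connected simple closed curve $\widetilde{\gamma}$ with $\lhyp(\widetilde{\gamma}) = 2\lhyp(\gamma)$ and $\lflat(\widetilde{\gamma}) = 2\lflat(\gamma)$. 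In either case, $\pi^{-1}(\gamma)$ contains a simple closed curve of hyperbolic length at most $2\delta$ and flat length equal to (at least) $\lflat(\gamma)$.

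Next, I would apply two classical estimates on the orientable cover. First, the comparison between hyperbolic and extremal length on a Riemann surface: there is a universal constant $C$ such that any simple closed curve $\alpha$ with $\lhyp(\alpha) \leq 1$ satisfies $\mathrm{Ext}_{\widetilde{X}}(\alpha) \leq C \cdot \lhyp(\alpha)$ (this follows from the collar lemma, which produces an embedded annular neighborhood of modulus $\gtrsim 1/\lhyp(\alpha)$, and the fact that $\mathrm{Ext}(\alpha) \leq 1/\mathrm{mod}(A)$ for any embedded annulus $A$ homotopic to $\alpha$). Second, Rengel's inequality applied to the flat conformal metric $|\widetilde{q}|^{1/2}$ gives $\lflat(\alpha)^2 \leq \mathrm{Area}(\widetilde{q}) \cdot \mathrm{Ext}_{\widetilde{X}}(\alpha) = 2 \cdot \mathrm{Ext}_{\widetilde{X}}(\alpha)$. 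Chaining these two inequalities and then descending to $X$ produces a bound of the form $\lflat(\gamma) \leq k\sqrt{\delta}$ for an absolute $k$; up to harmless factors of $2$ coming from the lift, this is exactly the asserted bound.

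I expect the only real subtlety to be bookkeeping in the one-sided case, where the lift has double the length but the area also doubles, so both multiplicative factors get absorbed into the absolute constant $k$. No substantial technical obstacle arises, since both the collar-lemma/extremal-length comparison and Rengel's inequality hold on arbitrary orientable Riemann surfaces of fixed topological type, and the statement we want is invariant under the $\mathbb{Z}/2$ symmetry used to define DQDs, so no non-orientable version of either classical estimate needs to be reproved from scratch.
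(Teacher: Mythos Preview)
Your argument is correct and actually more streamlined than the paper's. Both proofs begin the same way: a hyperbolically short curve has, via the collar lemma, an embedded annular neighbourhood of modulus comparable to $1/\delta$. Where you diverge is in converting large modulus into small flat length. The paper invokes Minsky's structure theory from \cite{Minsky1992HarmonicML} to homotope the collar to a \emph{primitive} annulus (one avoiding the singularities of $q$), and then uses the dichotomy that primitive annuli are either flat cylinders or expanding (concentric metric circles), in each of which cases the core curve has flat length bounded by $1/\sqrt{m}$ in an area-$1$ surface. You instead pass through extremal length: the collar bounds $\mathrm{Ext}(\widetilde{\gamma})$ above by a constant times $\lhyp(\widetilde{\gamma})$, and then the flat metric $|\widetilde{q}|$ is just one competitor in the supremum defining extremal length, giving $\lflat(\widetilde{\gamma})^2 \leq \mathrm{Area}(\widetilde{q})\cdot \mathrm{Ext}(\widetilde{\gamma})$ immediately. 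Your route is softer---it needs no structural information about flat annuli and no appeal to Minsky---while the paper's route yields a little more geometric picture of \emph{where} the short curve sits in the flat structure. One small remark: your extremal-length comparison $\mathrm{Ext}\leq C\,\lhyp$ is only valid for $\lhyp$ bounded above (you flag $\lhyp\leq 1$), so strictly your argument proves the lemma for $\delta$ below a fixed threshold; but this is also the regime in which the paper's sketch operates, and it is the only regime used downstream.
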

\begin{proof}[Sketch of proof]
  If $\lhyp(\gamma) \leq \delta$, then there exists an annulus around $\gamma$ of modulus proportional to $\frac{1}{\delta}$.
  By the results in \cite{Minsky1992HarmonicML}, this annulus can be homotoped to be a primitive annulus, i.e. an annulus that does not pass through a singularity of the flat metric.
  Such annuli are either expanding, i.e. concentric circles in the flat metric, or flat, and in either case, we have an upper bound on the flat length of the core curve in terms of the modulus.
  This proves the result.
\end{proof}

\begin{lemma}
  \label{lem:schwarz}
  Let $q$ be an area $1$ DQD on $\no_g$, and consider the unique hyperbolic metric with the same conformal structure.
  Let $A$ be a primitive annulus in $q$, i.e. an annulus whose interior does not pass  through a singularity of the flat metric.
  Let the modulus of $A$ be $m$.
  Then the hyperbolic length of the isotopy class of the core curve of the annulus is at most $\frac{\pi}{m}$.
\end{lemma}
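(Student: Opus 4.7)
The plan is to reduce to a standard Schwarz-Pick argument after first passing to the orientation double cover $\os_{g-1}$, where the dianalytic structure becomes an actual Riemann surface structure. First I would observe that any primitive annulus $A \subset \no_g$ has a two-sided core curve (the two boundary components of an embedded annulus have isotopic cores, and a one-sided curve has a Möbius, not cylindrical, neighbourhood). Therefore $A$ lifts to two disjoint embedded annuli $A_1, A_2 \subset \os_{g-1}$, each of modulus $m$, and the hyperbolic length of the core of $A$ in $\no_g$ equals the hyperbolic length of the core of $A_i$ in $\os_{g-1}$, since the covering is a local isometry on two-sided geodesics.

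Next, I would recall the intrinsic hyperbolic geometry of a model annulus $\mathcal{A}_m$ of modulus $m$. Using the conformal model $\{z \in \mathbb{C} : 0 < \mathrm{Im}(z) < m\}/(z \sim z+1)$, the map $w = e^{\pi z / m}$ uniformizes the universal cover as the upper half-plane, pulling back the Poincaré metric $|dw|/\mathrm{Im}(w)$ to $(\pi/m)\,|dz|/\sin(\pi \mathrm{Im}(z)/m)$. The core curve $\{\mathrm{Im}(z) = m/2\}$ then has hyperbolic length exactly $\pi/m$, and this is a closed geodesic in the intrinsic metric on $\mathcal{A}_m$.

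The conclusion comes from Schwarz-Pick: the inclusion $\iota: A_i \hookrightarrow \os_{g-1}$ is a holomorphic map between hyperbolic Riemann surfaces (here $A_i$ is equipped with its intrinsic hyperbolic metric via the identification with $\mathcal{A}_m$), hence $\iota^*(ds_{\os_{g-1}}) \leq ds_{\mathcal{A}_m}$ pointwise. Integrating along the core curve $c$ of $A_i$ gives $\ell_{\os_{g-1}}(c) \leq \ell_{\mathcal{A}_m}(c) = \pi/m$. Since the hyperbolic length of the isotopy class is the infimum over representatives, $\lhyp(c) \leq \pi/m$, and then by the first paragraph this bounds the hyperbolic length of the core of $A$ in $\no_g$ by $\pi/m$ as required.

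The argument has no real obstacle beyond bookkeeping; the only subtlety I would want to double-check is the factor in the standard core-length computation and the compatibility of the hyperbolic metric on $\no_g$ (defined via uniformization on the orientation double cover) with the Schwarz-Pick comparison, which is automatic since the deck involution acts by isometries of the hyperbolic metric on $\os_{g-1}$ and preserves the conformal structure.
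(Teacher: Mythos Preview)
Your proof is correct and follows the same Schwarz--Pick strategy as the paper: pass to the orientation double cover, equip the annulus with its intrinsic hyperbolic metric (core length $\pi/m$), and use that the holomorphic inclusion into $\os_{g-1}$ is distance-decreasing. Your treatment of the lift is in fact slightly sharper than the paper's sketch---you correctly observe that the core of an embedded annulus is two-sided and hence its hyperbolic length is preserved exactly under the double cover, whereas the paper only remarks that lengths change by at most a factor of two.
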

\begin{proof}[Sketch of proof]
  Without loss of generality, we can pass to the orientable double cover.
  This changes the hyperbolic lengths by at most a factor of two.
  Consider the interior of the annulus as a Riemann surface, and put the unique hyperbolic metric on that surface.
  With respect to this hyperbolic metric, the length of the core curve is $\frac{\pi}{m}$. Since the interior doesn't contain any singularities, the inclusion map is holomorphic, and holomorphic maps are distance reducing with respect to the hyperbolic metric.
  This proves the result.
\end{proof}

To find a geodesic segment whose endpoints lie in $\systole$, we will construct a DQD $q$, and use \autoref{lem: relating-flat-hyperbolic} to find large enough $t$ such that both $g_t(q)$ and $g_{-t}(q)$ are in $\systole$.
We will then show that some one sided curve on $q$ is very short using \autoref{lem:schwarz}, which will prove \autoref{prop:very-short-curves}.

\begin{proof}[Proof of \autoref{prop:very-short-curves}]
  We will prove the result by constructing explicit examples in genus $4$ and $9$, and then connect summing orientable surfaces of genus $j$ to get examples in genus $4+2j$ and $9+2j$.

  We first list the two properties we require from the DQD $q$ we want to construct, and show that having those properties proves the result.
  \begin{enumerate}[(a)]
  \item There exists an embedded annulus in $q$ with a very large modulus whose core curve is the square of a one-sided curve in $\pi_1(\no_g)$.
  \item The vertical and horizontal foliations decompose as a union of cylinders, i.e. the vertical and horizontal flow is periodic, and no closed orbit is a one-sided curve. Furthermore, deleting the core curves of the cylinders in the horizontal or vertical direction result in a disjoint union of \emph{orientable} subsurfaces.
  \end{enumerate}

We now show why having these two properties proves the result.
Suppose we have a DQD $q$ satisfying (a) and (b).
\autoref{lem:schwarz} tells us that satisfying (a) means that the one-sided curve whose square is the core curve of the annulus will be very short.
To find a large enough $t$ such that $g_t(q)$ has no one-sided curves shorter than $\varepsilon$, pick a $t$ enough such that each vertical cylinder in $g_t(q)$ is at least $2k \sqrt{\varepsilon}$ wide.
Consider now any closed curve who flat length is less than $k \sqrt{\varepsilon}$.
It must either be homotopic to one of the core curves of the vertical cylinders, or can be homotoped to be completely contained in one of the subsurfaces obtained by deleting all the core curves.
That is because it was neither of these cases, it would cross at least one of these cylinders, and since the cylinders are at least $2k \sqrt{\varepsilon}$ wide, the flat length of the curve would exceed $k \sqrt{\varepsilon}$.
If the curve is the core curve of a cylinder, or completely contained in one of the subsurfaces, it must be two-sided, by condition (b).

This proves that all one-sided curves have flat length exceeding $k\sqrt{\varepsilon}$, and therefore hyperbolic length exceeding $\varepsilon$.
The same argument also works for $g_{-t}(q)$, proving the result.

We now construct explicitly the DQDs satisfying conditions (a) and (b) in genus $4$, $9$, and above.

\subsection*{The $g=4$ case}

Consider the area $1$ DQD on $\no_4$ depicted in \autoref{fig:genus-4-example}.
\begin{figure}[h]
  \centering
    \def\svgscale{0.45}
    \import{./images/}{genus-4-example.pdf_tex}

  \caption{A DQD on $\no_4$.}
  \label{fig:genus-4-example}
\end{figure}
We impose the following constraint on the depicted DQD: the edges $\{c, c^{\prime}, d, d^{\prime}\}$ are all oriented at an angle of $\pm \frac{\pi}{4}$, and have the same length.

Observe that by making the length of $c$ (and correspondingly $c^{\prime}$, $d$, and $d^{\prime}$) go to $0$, while keeping the area $1$ lets us embed an annulus of high modulus (pictured as dotted semi circle in \autoref{fig:genus-4-example}) around any curve in $\{c, c^{\prime}, d, d^{\prime}\}$.
This shows that the DQD we constructed satisfies condition (a).

Checking condition (b) is easy, but tedious.
For convenience, we have labelled the core curves of the vertical cylinders in red, blue, and green: the reader can check that they are all two-sided, and deleting them results in orientable subsurfaces.
In fact, deleting the core curves results in $2$ pairs of pants.

\subsection*{The $g=9$ case}
Consider the area $1$ DQD on $\no_9$ depicted in \autoref{fig:genus-9-example}.
To keep the picture from getting cluttered, we describe the edge gluing maps in words: the edges labelled $c$ are glued via the map $z \mapsto -\overline{z} + k$, the edges labelled $b$ and $e$ are glued via $z \mapsto -z + k$, where $k$ is some constant.
All the other gluings are translation gluings.
\begin{figure}[h]
  \centering
    \def\svgscale{0.45}
    \import{./images/}{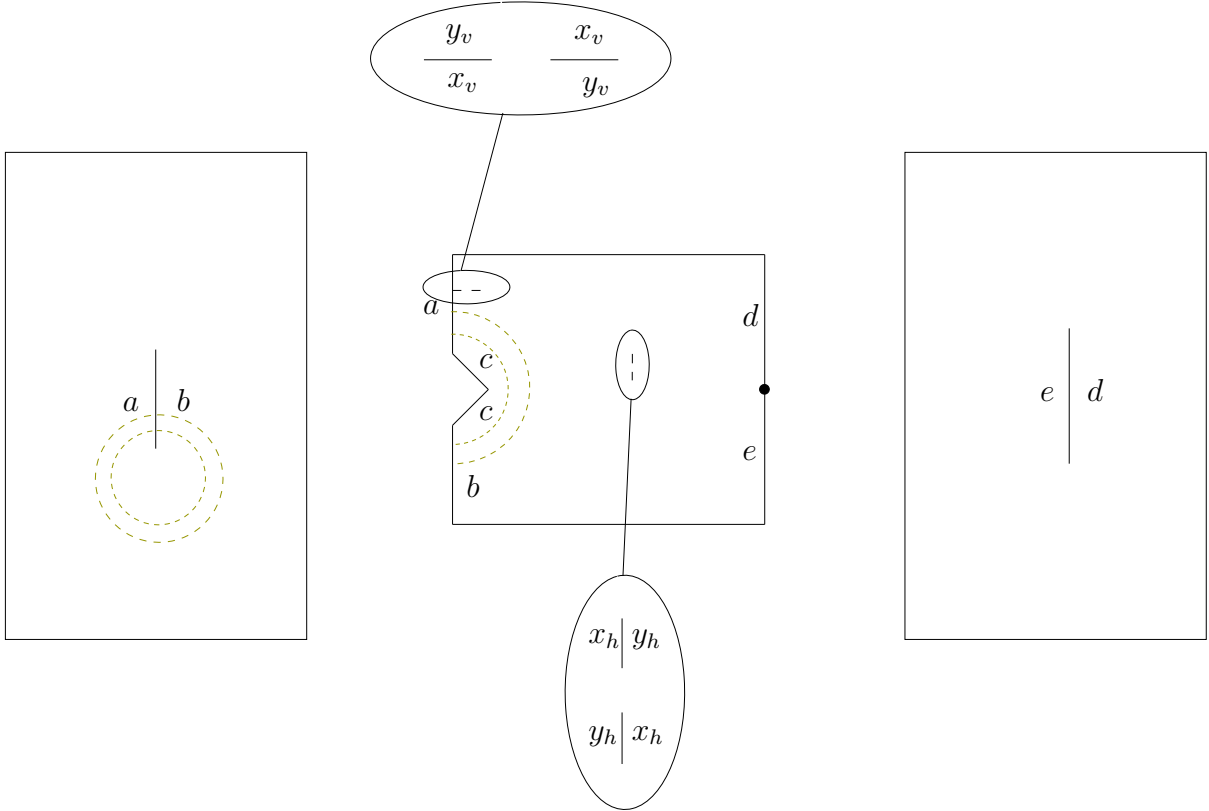}

  \caption{A DQD on $\no_9$. To display the gluing maps on the small slits, we have a zoomed in picture in the ellipses.}
  \label{fig:genus-9-example}
\end{figure}
We impose the following constraints on the DQD.
\begin{enumerate}[(i)]
\item The edges labelled $c$ are oriented at an angle of $\pm \frac{\pi}{4}$, and the lengths of $\{x_h, y_h, x_v, y_v\}$ are $\frac{\lflat(c)}{4\sqrt{2}}$.
\item The left edge of $x_v$ is aligned with the left edge of $c$, the left edge of $y_v$ is aligned with the midpoint of $c$, the top edge of $x_h$ is aligned with the top edge of $c$, and the top edge of $y_h$ is aligned with the midpoint of $c$.
\end{enumerate}
By making $c$ smaller, while keeping the area equal to $1$, one can embed an annulus of high modulus in the DQD, pictured in dotted olive green in \autoref{fig:genus-9-example}.
This shows that our construction satisfies condition (a).

To see that deleting the core curves of the horizontal cylinders results in orientable subsurfaces, note that deleting the core curves passing through $c$ results in $2$ pairs of pants, and a genus $3$ orientable surface with one boundary component.
This is again easy, but tedious to verify, so we leave the verification to the reader.
This shows that the example satisfies condition (b).

\subsection*{The induction step}
To get higher genus DQDs satisfying conditions (a) and (b), we start with the $g=4$ and $g=9$ examples and connect-sum an orientable surface using the slit construction.
To ensure that the new surfaces still satisfy conditions (a) and (b), we need to ensure that the slit we construct if far away from the annulus of condition (a), as well as all the vertical and horizontal leaves passing through $\{c, c^{\prime}, d, d^{\prime}\}$ in the $g=4$ example, and the vertical and horizontal leaves passing through $c$ in the $g=9$ example.
This will ensure that the resulting higher genus surface still satisfies conditions (a) and (b).
\end{proof}

To relate Teichm\"uller distance to hyperbolic lengths, we need Wolpert's lemma (\autocite{wolpert1979length})
\begin{lemma}[Wolpert's Lemma]
  Let $M$ and $M^{\prime}$ be two points in $\teich(\os_g)$, and let $\gamma$ be a simple closed curve on $\os_g$.
  Let $R$ be the Teichm\"uller distance between $M$ and $M^{\prime}$. Then the ratio of the hyperbolic length
  of $\gamma$ and $R$ are related by the following inequalities.
  \begin{align*}
    \exp(-2R) \leq \frac{\ell_{\mathrm{hyp}}(M, \gamma)}{\ell_{\mathrm{hyp}}(M^{\prime}, \gamma)} \leq \exp(2R)
  \end{align*}
\end{lemma}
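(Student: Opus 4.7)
The plan is to derive the bound from an infinitesimal version along a Teichm\"uller geodesic and then integrate. Connect $M$ and $M^{\prime}$ by the Teichm\"uller geodesic $t \mapsto M_t$ with $M_0 = M$, $M_R = M^{\prime}$, and unit Teichm\"uller speed. It suffices to establish the infinitesimal estimate
\[
\left| \frac{d}{dt} \log \ell_{\mathrm{hyp}}(M_t, \gamma) \right| \leq 2
\]
for any simple closed curve $\gamma$; integrating from $0$ to $R$ and exponentiating then yields the lemma.

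The infinitesimal estimate is where the real content lies. A unit-speed Teichm\"uller geodesic at $M_t$ is generated by the Beltrami differential $\mu_t = \bar{q}_t / |q_t|$ of unit $L^{\infty}$ norm, where $q_t$ is a unit-area holomorphic quadratic differential. Gardiner's first-variation formula for the hyperbolic length function expresses $(d/dt)\, \ell_{\mathrm{hyp}}(M_t, \gamma)$ as twice the real part of a pairing between $\mu_t$ and a certain measurable quadratic differential $\Phi_\gamma$ supported on (a neighbourhood of) the geodesic representative of $\gamma$, whose natural $L^1$ norm is at most $\ell_{\mathrm{hyp}}(M_t, \gamma)$. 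Bounding the pairing by $\|\mu_t\|_{\infty} \cdot \|\Phi_\gamma\|_{L^1}$ immediately gives the desired derivative bound, after dividing through by $\ell_{\mathrm{hyp}}(M_t, \gamma)$.

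The main obstacle is this first-variation computation, which genuinely requires the deformation theory of Fuchsian groups and a clean description of the cotangent vector to the length function on $\teich(\os_g)$. A conceptually cleaner (but technically messier) alternative is to use the quasiconformal definition of Teichm\"uller distance directly: pick a $K$-quasiconformal map $f : M \to M^{\prime}$ with $K$ arbitrarily close to $e^{2R}$, and argue via the quasi-invariance of extremal length, which gives $e^{-2R} \leq \mathrm{Ext}_{M^{\prime}}(\gamma)/\mathrm{Ext}_M(\gamma) \leq e^{2R}$. However, the standard comparison between $\mathrm{Ext}(\gamma)$ and $\ell_{\mathrm{hyp}}(\gamma)$ is not a universal multiplicative bound (it involves an $e^{\ell_{\mathrm{hyp}}(\gamma)/2}$-type correction in one direction), so recovering the precise constant $e^{2R}$ by this route demands a separate argument. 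The integral-of-derivatives route is therefore the cleanest way to pin down the exact exponential factor that appears in the statement.
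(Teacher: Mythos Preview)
The paper does not prove this lemma at all; it simply quotes the statement and cites Wolpert's original 1979 paper. So there is no ``paper's own proof'' to compare against, and your write-up is really a standalone proof sketch rather than a reconstruction.

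On the merits: your Gardiner--variational route is legitimate and, with careful bookkeeping of the constants in the first-variation formula for $\ell_\gamma$, does yield the bound $|d\log \ell_\gamma| \le 2\,\|\mu\|_\infty$. It is, however, heavier machinery than the problem requires, and your paragraph leaves the crucial normalization (that the $L^1$ mass of the Poincar\'e--series differential $\Phi_\gamma$ is exactly $\ell_\gamma$, not merely proportional to it) as an assertion.

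More importantly, you dismiss the quasiconformal route too quickly. You are right that comparing $\mathrm{Ext}_M(\gamma)$ with $\ell_{\mathrm{hyp}}(M,\gamma)$ loses constants. But Wolpert's actual argument bypasses extremal length on the surface and instead uses the \emph{annular cover} $A_\gamma \to M$ corresponding to $\langle \gamma \rangle$. That cover is conformally a round annulus of modulus exactly $\pi/\ell_{\mathrm{hyp}}(M,\gamma)$; a $K$-quasiconformal map $M\to M'$ lifts to a $K$-quasiconformal map $A_\gamma \to A'_\gamma$, and moduli of annuli change by at most a factor of $K$ under $K$-quasiconformal maps. Hence $\ell_{\mathrm{hyp}}(M',\gamma)/\ell_{\mathrm{hyp}}(M,\gamma) \in [K^{-1},K]=[e^{-2R},e^{2R}]$ with no loss. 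This is shorter, more elementary, and pins down the constant immediately---so the ``conceptually cleaner but technically messier alternative'' you set aside is in fact both cleaner \emph{and} less messy, once aimed at the right annulus.
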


Using Proposition \ref{prop:very-short-curves} and Wolpert's lemma, we can prove Theorem
\ref{thm:qc-fail}.
\begin{proof}[Proof of Theorem \ref{thm:qc-fail}]
  Suppose that $\systole(\no_g)$ was indeed quasi-convex.
  That would mean that there exists some $R > 0$, depending on $\varepsilon$ such that every point in the interior of any geodesic segment with endpoints in $\systole$ was within $R$
  distance of some point in $\systole(\no_g)$.
  Proposition \ref{prop:very-short-curves} lets us construct a sequence of Teichmüller geodesic segments such that for on some interior point, the length of a given one-sided curve $\gamma$ goes to $0$.
  If those points were within distance $R$ of $\systole$, there would be some point in $\systole$ where the length of $\gamma$ was at most $\exp(2R)$ times the length of $\gamma$ in the geodesic, by Wolpert's lemma.
  But since the length of $\gamma$ in the geodesic goes to $0$, the length in the corresponding closest point in $\systole$ must also go to $0$.
  This violates the definition of $\systole$, giving us a contradiction, and proving the result.
\end{proof}

\printbibliography

\end{document}